\newcommand{\R}{{\mathbb R}}
\newtheorem{theorem}{Theorem}[section]
\newtheorem{lemma}[theorem]{Lemma}
\newtheorem{prop}[theorem]{Proposition}
\newtheorem{corollary}[theorem]{Corollary}
\newtheorem{conj}[theorem]{Conjecture}
\newtheorem{example}[theorem]{Example}
\newcommand{\inte}{{\operatorname{int}}}
\newcommand{\supp}{{\operatorname{supp}}}
\newcommand{\sfe}{{\mathbb S}^{n-1}}
\title{The $L_p$-Minkowski problem for $-n< p<1$}
\author[G.~Bianchi, K.J.~B\"or\"oczky, A.~Colesanti, D.~Yang]{Gabriele Bianchi, K\'aroly J. B\"or\"oczky, Andrea Colesanti, Deane Yang}
\address{Dipartimento di Matematica e Informatica ``U. Dini", Universit\`a di Firenze, Viale Morgagni 67/A, Firenze, Italy I-50134} \email{gabriele.bianchi@unifi.it}
\address{Alfr\'ed R\'enyi Institute of Mathematics, Hungarian Academy
  of Sciences, Reltanoda u. 13-15, H-1053 Budapest, Hungary, and
Department of Mathematics, Central European University, Nador u 9, H-1051, Budapest, Hungary} 
\email{boroczky.karoly.j@renyi.mta.hu}
\address{Dipartimento di Matematica e Informatica ``U. Dini", Universit\`a di Firenze, Viale Morgagni 67/A, Firenze, Italy I-50134} \email{andrea.colesanti@unifi.it}
\address{Department of Mathematics, New York University Tandon School of Engineering, 6 Metrotech Center, Brooklyn, NY 11201 U.S.A.}
\email{deane.yang@nyu.edu}
\subjclass[2010]{Primary: 52A38,35J96}
\keywords{$L_{p}$ Minkowski problem, Monge-\'Ampere equation}
\thanks{First and third authors are supported in part by the Gruppo Nazionale per l'Analisi Matematica, la Probabilit\`a e le loro Applicazioni (GNAMPA) of the Istituto Nazionale di Alta Matematica (INdAM). Second author is supported in part by
NKFIH grants 116451, 121649 and 129630.}
\begin{document}
\maketitle

\begin{abstract}
Chou and Wang's existence result for the $L_p$-Minkowski problem  on ${\mathbb S}^{n-1}$ for $p\in(-n,1)$  and an absolutely continuous 
measure  is discussed and extended to more general measures. In particular, we provide an almost optimal sufficient condition for the 
case $p\in(0,1)$.
\end{abstract}

\section{Introduction}

The setting for this paper is the $n$-dimensional Euclidean space $\mathbb{R}^{n}$. A \emph{convex body} $K$ in $\mathbb{R}^{n}$ is a 
compact convex set that has non-empty interior. For any $x\in\partial K$, $\nu_K(x)$ (``the Gau{\ss} map'') is the family of all unit exterior 
normal vectors at $x$; in particular $\nu_K(x)$ consists of a unique vector for $\mathcal{H}^{n-1}$ almost all $x\in\partial K$ 
(see, {\em e.g.}, Schneider \cite{SCH}), where
$\mathcal{H}^{n-1}$ stands for the ($n-1$)-dimensional Hausdorff measure. 

The \emph{surface area measure} $S_{K}$ of $K$ is a Borel measure on the unit sphere ${\mathbb S}^{n-1}$ of ${\mathbb R}^n$, defined, for a Borel 
set $\omega\subset {\mathbb S}^{n-1}$ by
$$
S_{K}(\omega)=\mathcal{H}^{n-1}\left(\nu_K^{-1}(\omega)\right)=
\mathcal{H}^{n-1}\left(\{x\in\partial K:\,\nu_K(x)\cap\omega\neq\emptyset\}\right)
$$
(see, {\em e.g.}, Schneider \cite{SCH}).

As one of the cornerstones of the classical Brunn-Minkowski theory, the Minkowski's existence theorem can be stated as follows  (see, {\em e.g.}, 
Schneider \cite{SCH}): If the Borel measure $\mu$ is not concentrated on a great subsphere of ${\mathbb S}^{n-1}$, then $\mu$ is the surface area measure of a convex 
body if and only if the following vector condition is verified 
$$
\int_{{\mathbb S}^{n-1}}ud\mu(u)=0.
$$
Moreover, the solution is unique up to translation. The regularity of the solution has been also well investigated, see {\em e.g.},
Lewy \cite{LE}, Nirenberg \cite{NIR}, Cheng and Yau \cite{CY}, Pogorelov \cite{POG}, and Caffarelli \cite{Caf90a,Caf90b}.

The surface area measure of a convex body has a clear geometric significance. In \cite{LUT},  Lutwak showed that there is an $L_{p}$ analogue 
of the surface area measure (known as the $L_{p}$-surface area measure). For a convex compact set $K$ in $\R^n$, let $h_K$ be its support 
function:
$$
h_K(u)=\max\{\langle x,u\rangle:\, x\in K\} \mbox{ \ \ for $u\in\R^d$},
$$
where $\langle\cdot,\cdot\rangle$ stands for the Euclidean scalar product.

Let ${\mathcal K}_0^n$ denote the family of convex bodies in $\R^n$ containing the origin $o$.  Note that if $K\in{\mathcal K}_0^n$,
then $h_K\ge0$. If $p\in\R$ and 
$K\in {\mathcal K}_0^n$, then the  $L_{p}$-surface area measure is defined by
$$
dS_{K,p}=h_K^{1-p}\,d S_K
$$
where for $p>1$ the right hand side is assumed to be a finite measure.
In particular, if $p=1$, then $S_{K,p}=S_K$, and if $p<1$ and $\omega\subset {\mathbb S}^{n-1}$ is a Borel set, then
$$
S_{K,p}(\omega)=\int_{x\in\nu_{K}^{-1}(\omega)}\langle x,\nu_{K}(x)\rangle^{1-p}d\mathcal{H}^{n-1}(x).
$$

In recent years, the $L_{p}$-surface area measure appeared in, {\em e.g.}, \cite{ADA,BG, CG, GM, H1, HP1, HP2, HENK, LU1, LU2, LR, LYZ1, LYZ2, LYZ3, LYZ6, LZ, NAO, NR, PAO, PW, ST3}. In \cite{LUT}, Lutwak posed the associated $L_{p}$-Minkowski problem for $p\geq 1$ which extends the classical Minkowski problem. 
In addition, the $L_p$-Minkowski problem for $p<1$ was publicized by a series of talks by Erwin Lutwak in the 1990's, and appeared in print in Chou and Wang \cite{CW} for the first time. \\

\textbf{$L_p$-Minkowski problem:} For $p\in\R$, what are the necessary and
sufficient conditions on a finite Borel measure $\mu$ on ${\mathbb S}^{n-1}$ in order 
that $\mu$ is the $L_p$-surface area measure of a convex body $K\in {\mathcal K}_0^n$?\\

Besides discrete measures, an important special class is that of Borel measures $\mu$ on ${\mathbb S}^{n-1}$ which have a density with respect to ${\mathcal H}^{n-1}$:
\begin{equation}
\label{densityfunction}
d\mu=f\,d{\mathcal H}^{n-1}
\end{equation}
for some non-negative measurable function $f$ on ${\mathbb S}^{n-1}$. 
If (\ref{densityfunction}) holds, then the $L_p$-Minkowski problem amounts to solving the Monge-Amp\`ere type equation
\begin{equation}
\label{MongeAmper}
h^{1-p}\det(\nabla^2h+h I)=f
\end{equation}
where $h$ is the unknown  non-negative (support) function on ${\mathbb S}^{n-1}$ to be found, $\nabla^2 h$ denotes the (covariant) Hessian matrix of $h$ with respect to an orthonormal frame on ${\mathbb S}^{n-1}$, and $I$ is the identity matrix. Recent extensions of the $L_p$-Minkowski problem are the $L_p$ dual Minkowski problem proposed by
Lutwak, Yang, Zhang \cite{LYZ18},  and the Orlicz Minkowski problem discussed by Haberl, Lutwak, Yang, Zhang \cite{HLYZ0}
(extending the case $p>1$, for even measures),  Huang, He \cite{HuH12} (extending the case $p>1$) and
Jian, Lu \cite{JLZ18+} (extending the case $0<p<1$). 

The case $p=1$, namely the classical Minkowski problem, was solved by Minkowski \cite{MIN} in the case of polytopes,
and in the general case by Alexandrov \cite{Ale38}, and Fenchel and Jessen \cite{FeJ38}.
The case $p>1$ and $p\neq n$ was solved by Chou and Wang \cite{CW},  Guan and Lin \cite{GL} and Hug, Lutwak, Yang, and Zhang \cite{HLYZ2}; 
Zhu \cite{Zhu17} investigated the dependence of the solution on $p$ for a given target measure.
We note that the solution is unique if $p>1$ and $p\neq n$, and unique up to translation if $p=1$. 
In addition, if $p>n$, then the origin lies in the interior of the solution $K$; however, if $1<p<n$, then
possibly the origin lies on the boundary of the solution $K$ even if (\ref{densityfunction}) holds for a positive continuous $f$.

The goal of this paper is to discuss the $L_p$-Minkowski problem for $p<1$. The case $p=0$ is the so called
logarithmic Minkowski problem see, {\em e.g.}, 
\cite{BLYZ, BLYZ2, BLYZ3, BoH16, LU1, LU2, LR, NAO, NR, PAO, ST1, ST2, ST3, Z2}.
Additional references regarding the $L_{p}$ Minkowski problem and Minkowski-type problems can be found in, {\em e.g.}, 
\cite{WC, CW, GG, GL, GM, H1, HL1, HLYZ0, HLYZ1, HMS, JI, KL, LWA, LUT, LO, LYZ5, MIN, ST1, ST2, Z3, Zhu15}. 
Applications of the solutions to the $L_{p}$ Minkowski problem can be found in, {\em e.g.}, \cite{AN1, AN2, KSC, Z, GH, LYZ4, CLYZ, HS1, HUS, Iva13, HS2, HSX, Wan12,Wan15}.

We note that if $p<1$, then non-congruent $n$-dimensional convex bodies may give rise to the same $L_p$-surface area measure, see 
 Chen, Li, and Zhu \cite{CSL01} for examples when $0<p<1$, Chen, Li, and Zhu \cite{CSL0} for examples when $p=0$
and  Chou and Wang \cite{CW} for examples when $p<0$.

If $0<p<1$, then the $L_p$-Minkowski problem is essentially solved by Chen, Li, and Zhu \cite{CSL01}.

\begin{theorem}[Chen, Li, and Zhu]
\label{theo01}
If $p\in(0,1)$, and $\mu$ is a finite Borel measure  on ${\mathbb S}^{n-1}$ not concentrated on a great subsphere, then
 $\mu$ 
is the $L_p$-surface area measure of a convex body  $K\in{\mathcal K}_{0}^n$. 
\end{theorem}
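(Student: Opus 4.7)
My approach would be variational, following the framework established by Chou--Wang and adapted by Chen--Li--Zhu to treat the range $p\in(0,1)$ without regularity or non-degeneracy hypotheses on $\mu$. Consider the minimization problem
\[
\inf\Bigl\{\int_{\sfe}h_Q^{p}\,d\mu\,:\,Q\in\mathcal{K}_0^n,\ V(Q)=1\Bigr\}.
\]
If a minimizer $K$ exists, a Lagrange-multiplier calculation, using $\delta V(Q)=\int\delta h_Q\,dS_Q$, yields $h_K^{1-p}\,dS_K=\lambda\,d\mu$ on $\sfe$ for some $\lambda>0$, that is $S_{K,p}=\lambda\mu$; the homogeneity $S_{\alpha K,p}=\alpha^{n-p}S_{K,p}$ then lets us rescale $K$ so that $S_{K,p}=\mu$ exactly.

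I would first carry out the argument on discrete measures $\mu=\sum_{i=1}^N c_i\delta_{u_i}$ whose support is not contained in any closed hemisphere, restricting attention to polytopes $P=\bigcap_{i}\{x:\langle x,u_i\rangle\le h_i\}$ and minimizing $F(h_1,\dots,h_N)=\sum_i c_i h_i^{p}$ over $\{h\ge 0:V(P)=1\}$. Because $0<p<1$, $F$ is continuous and concave; together with the volume constraint and the fact that the $u_i$ positively span $\R^n$, this is enough to force a minimum to be attained at some $h^{*}$. Then the Karush--Kuhn--Tucker relations combined with $\partial V/\partial h_i=S_{P^{*}}(\{u_i\})$ give exactly $S_{P^{*},p}\propto\mu$, and after rescaling $P^{*}$ one obtains a polytope solving the discrete problem.

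For general $\mu$, I would approximate it weakly by discrete measures $\mu_k$, each not concentrated on a great subsphere and each of total mass close to $\mu(\sfe)$, and take corresponding polytope solutions $P_k$. A Blaschke-selection argument yields a Hausdorff limit $K\in\mathcal{K}_0^n$ along a subsequence, and since $1-p>0$ the map $Q\mapsto h_Q^{1-p}\,dS_Q$ is weakly continuous under Hausdorff convergence of bodies in $\mathcal{K}_0^n$, so $S_{K,p}=\mu$. The main obstacle is the a priori estimates needed for this limiting step: one must rule out both unbounded blow-up and collapse to a lower-dimensional body, complications heightened by the fact that the origin may lie on $\partial P_k$, so $h_{P_k}$ is not bounded below. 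These bounds come from combining the uniform non-concentration of the $\mu_k$ with the sublinearity $h\mapsto h^p$; it is precisely the positivity of both $p$ and $1-p$ that makes $p\in(0,1)$ more tractable than the remaining subcases $p\le 0$, and allows the argument to dispense with any further hypothesis on $\mu$.
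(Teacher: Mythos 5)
Your overall strategy is a legitimate variational route, but it is a genuinely different one from the paper's. The paper does \emph{not} approximate by discrete measures: it first proves the Chou--Wang existence result (Theorem~\ref{theodens}) for densities $f$ bounded above and away from zero, by minimizing a modified functional $\Phi_\varepsilon(K,\xi(K))$ whose definition forces the ``optimal center'' $\xi(K)$ to stay strictly inside $K$ (Sections~\ref{secenergy}--\ref{sectheodens}), and only then approximates a general $\mu$ by smooth densities (Section~\ref{sec01lp1}). Your route (discrete case via polytope minimization plus KKT, then weak approximation) is closer in spirit to the original Chen--Li--Zhu argument and to Zhu's polytope paper. That trade is fair: you avoid the $\varphi_\varepsilon$/$\xi(K)$ machinery that the paper needs to make sense of the Euler--Lagrange equation when the minimizer may have $o\in\partial K$, at the cost of having to resolve the same degeneracy at the polytope level.

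There is, however, a genuine gap in the a priori estimates, and it is exactly the point the paper devotes Section~\ref{sec01lp2} to. The hypothesis ``$\mu$ not concentrated on a great subsphere'' allows $\mu$ to be concentrated on a \emph{closed} hemisphere; for instance $\mu=\delta_{e_1}+\delta_{-e_1}+\delta_{e_2}$ on $\mathbb{S}^1$ is admissible, even though the open hemisphere around $-e_2$ has $\mu$-measure zero. In this situation, any discrete approximants $\mu_k$ with $\mu_k\rightharpoonup\mu$ and support positively spanning $\R^n$ must put vanishing mass near $-e_2$, so they cannot satisfy any \emph{uniform} non-concentration condition of the kind your diameter bound invokes. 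Concretely, the blow-up argument in the paper's Lemma~\ref{Pmbounded01} needs $\mu\bigl(\Omega(v,\tfrac{\pi}{2}-3\gamma)\bigr)>0$ for the direction $v$ in which the bodies elongate, and when $\mu$ lives in a closed hemisphere this can fail for the relevant $v$. The paper circumvents this not by sharpening the estimate but by a structural device: it symmetrizes $\mu$ under a finite cyclic group $G_0$ built from a regular simplex in $\widetilde L^\perp$, producing a measure $\mu_0$ for which \emph{every} open hemisphere has positive mass, solves for a $G_0$-invariant body $K_0$ with $S_{K_0,p}=\mu_0$, and then recovers $K$ as $K_0\cap D(v_0)$, where $D(v_0)$ is a Dirichlet--Voronoi cone of the group action. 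Nothing of this sort appears in your sketch, and without it your limiting step is unjustified precisely in the sub-case that makes the full statement of Theorem~\ref{theo01} (rather than the easier ``positively spanning'' version) a theorem. You should either supply a diameter bound that survives the loss of mass on open hemispheres, or incorporate a symmetrize-and-cut argument of the paper's type.

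A smaller point: in the discrete step, minimizing the concave functional $F(h)=\sum_i c_i h_i^p$ over $\{h\ge 0:\,V(P(h))=1\}$ can drive some coordinate $h_i$ to $0$, and at $h_i=0$ the derivative $p c_i h_i^{p-1}$ blows up, so the Lagrange/KKT identity $S_{P^*,p}(\{u_i\})=\lambda c_i$ needs an argument that $h_i^*>0$ is impossible only when $c_i=0$, or else a separate treatment of degenerate facets. This is standard but should not be waved through with ``this is enough to force a minimum.''
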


We believe that the following  property characterizes $L_p$-surface area measures for $p\in(0,1)$.

\begin{conj}
\label{conjdens01}
Let $p\in(0,1)$, and let $\mu$ be a non-trivial Borel measure  on ${\mathbb S}^{n-1}$. Then $\mu$ 
is the $L_p$-surface area measure of a convex body  $K\in{\mathcal K}_{0}^n$ if and only if 
$\supp\,\mu$ is not a pair of antipodal points. 
\end{conj}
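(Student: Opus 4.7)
The plan is to split the conjecture into necessity (if $\mu = S_{K,p}$ then $\supp\mu$ is not an antipodal pair) and sufficiency. Necessity is a direct structural argument; sufficiency reduces, via an approximation, to Theorem~\ref{theo01}.

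For \emph{necessity}, suppose $K \in \mathcal{K}_0^n$ satisfies $\supp S_{K,p} = \{u,-u\}$. From $dS_{K,p} = h_K^{1-p}\,dS_K$ with $1-p > 0$, the vanishing of $S_{K,p}$ off $\{\pm u\}$ forces $h_K = 0$ $S_K$-a.e.\ outside $\{\pm u\}$; by continuity of $h_K$, $\supp S_K \subseteq \{u,-u\} \cup Z$ with $Z = \{v \in \sfe : h_K(v) = 0\}$. If $o \in \inte K$ then $Z = \emptyset$ and $\supp S_K \subseteq \{u,-u\}$, forcing $K$ to be a segment, contradicting non-empty interior. Otherwise $Z \subseteq N_K(o) \cap \sfe$, where the normal cone $N_K(o)$ is a pointed closed convex cone (pointedness uses $K$ being full-dimensional). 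Minkowski's balance $\int_\sfe v\,dS_K(v) = 0$ becomes
\[
  \int_Z v\,dS_K(v) = \bigl(S_K(\{u\}) - S_K(\{-u\})\bigr)(-u).
\]
The left side lies in $N_K(o)$: if $S_K(\{u\}) \ne S_K(\{-u\})$ then $\pm u \in N_K(o)$, whence $h_K(\pm u) = 0$ and $\mu$ vanishes at that point, contradicting $\supp\mu = \{u,-u\}$. If the two masses are equal, pick any $x_0 \in \inte K$: then $-\langle v,x_0\rangle > 0$ on $Z$, so $0 = -\langle x_0, \int_Z v\,dS_K\rangle = \int_Z (-\langle v,x_0\rangle)\,dS_K$ forces $S_K|_Z = 0$, again reducing to the segment case.

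For \emph{sufficiency}, if $\mu$ is not concentrated on any great subsphere, Theorem~\ref{theo01} applies directly. Otherwise $\supp\mu \subset L = H \cap \sfe$ for some linear hyperplane $H$. Set $\mu_\e = \mu + \e\mathcal{H}^{n-1}|_{\sfe}$; this is not concentrated on a great subsphere, so Theorem~\ref{theo01} yields $K_\e \in \mathcal{K}_0^n$ with $S_{K_\e,p} = \mu_\e$. The plan is to extract a Hausdorff subsequential limit $K_{\e_k} \to K$ and pass to the limit $S_{K_\e,p} \to S_{K,p}$. This requires (i) a uniform diameter bound on $K_\e$, via $h_{K_\e}$-weighted integral estimates in the spirit of Chou--Wang \cite{CW}; (ii) a uniform non-collapse of $K_\e$; and (iii) Blaschke selection together with weak continuity of $S_{\cdot,p}$ along the limit, yielding $S_{K,p} = \mu$.

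The chief obstacle is (ii), and here is precisely where $\supp\mu \ne \{v,-v\}$ enters. Should $K_\e$ collapse along a subsequence with minimal width in some direction $u_\e \to u_0$ tending to zero, one expects the $h_{K_\e}^{1-p}$ reweighting to concentrate the bulk of $\mu_\e$—hence of $\mu$ in the limit—on $\{\pm u_\e\}$, forcing $\supp\mu \subseteq \{\pm u_0\}$ and contradicting the hypothesis. Converting this heuristic into a quantitative in-radius lower bound for $K_\e$, controlled by a quantitative separation of $\supp\mu$ from any antipodal pair, is the main technical challenge of the argument.
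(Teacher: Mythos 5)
The statement you have attempted is labeled a \emph{conjecture} in the paper and is stated there as open for $n\ge 3$: the paper proves it only in the planar case (citing B\"or\"oczky--Trinh and Chen--Li--Zhu), and in general dimension establishes only the necessity direction (Lemma~\ref{01lpnotantipodal}) plus a strictly weaker sufficiency (Theorem~\ref{theodens01}), which additionally assumes ${\rm pos}\,\supp\mu\neq{\rm lin}\,\supp\mu$ whenever $\supp\mu$ does not span $\R^n$. Your necessity argument is correct and is a genuine alternative to the paper's: the paper shows $o\in\partial K$ and then applies Hahn--Banach to the projection of the normal cone $N_K(o)$ onto $w^\perp$ to exhibit an open halfspace on which $S_{K,p}$ is positive, whereas you feed the Minkowski balance $\int_\sfe v\,dS_K(v)=0$ into the cone structure of $Z=N_K(o)\cap\sfe$ and split on whether the two atoms of $S_K$ at $\pm u$ agree; both reach the contradiction, and yours avoids the separation lemma at the cost of handling the two mass-balance subcases.

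Your sufficiency argument, however, contains a genuine gap, and it sits exactly where the conjecture is open. The shell (approximate by $\mu_\e=\mu+\e\,\mathcal{H}^{n-1}|_{\sfe}$, solve via Theorem~\ref{theo01}, take a Blaschke limit) is reasonable, but the uniform diameter bound you file under step (i) as a routine ``Chou--Wang-style weighted estimate'' is precisely what is unavailable in the hard case. The diameter estimate in the paper (Lemma~\ref{Pmbounded01}) crucially uses that $\mu$ assigns positive mass to every open hemisphere; when $\supp\mu$ lies in a proper great subsphere $L\cap\sfe$ with ${\rm pos}\,\supp\mu=L$ --- the one regime excluded from Theorem~\ref{theodens01} --- this hypothesis fails, and the paper's workaround (symmetrize $\mu$ under a cyclic group and recover $K$ as a Dirichlet-Voronoi section, Section~11) also fails there, since the construction of $v_0$ requires the dual cone $\sigma=\{y\in L:\langle y,v\rangle\le0\ \forall v\in{\rm pos}\,\supp\mu\}$ to be nontrivial. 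You have also slightly misplaced the obstacle: once the diameter of the normalized $K_\e$ is bounded, non-collapse is essentially free, because $S_{K,p}(\sfe)\le(nV(K))^{1-p}\bigl(\mathcal{H}^{n-1}(\partial K)\bigr)^{p}$ by H\"older, so degeneration would force the total $L_p$-area to vanish while $\mu_\e(\sfe)\to\mu(\sfe)>0$. The missing ingredient is thus the diameter bound itself under the conjecture's minimal hypothesis, and supplying it is exactly the content that remains open; as you yourself candidly note, what you have written is a plan, not a proof.
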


Conjecture~\ref{conjdens01} is proved in the planar case $n=2$ independently by B\"or\"oczky and Trinh \cite{BoT} 
and Chen, Li,and Zhu \cite{CSL01}. 
Here we prove a slight extension of the result proved in \cite{CSL01}. We note that Lemma~\ref{01lpnotantipodal} of the present
paper implies that
$\supp\, S_{K,p}$ is not a pair of antipodal points for any convex body  $K\in{\mathcal K}_{0}^n$ and $p<1$.
For $X\subset \R^n$, its positive hull is
$$
{\rm pos}\,X=\left\{\sum_{i=1}^k\lambda_ix_i:\,\mbox{$\lambda_i\geq 0$, $x_i\in X$ and $k\geq 1$ integer}
\right\},
$$
which is closed if $X\subset {\mathbb S}^{n-1}$ is compact. We prove the following result. 

\begin{theorem}
\label{theodens01}
Let $p\in(0,1)$, let $\mu$ be a non-trivial finite Borel measure  on ${\mathbb S}^{n-1}$,
and let $L={\rm lin}\,\supp\,\mu$. If either $\supp\,\mu$ spans $\R^n$, or 
${\rm dim}\,L\leq n-1$ and ${\rm pos}\,\supp\,\mu\neq L$, then $\mu$ 
is the $L_p$-surface area measure of a convex body  $K\in{\mathcal K}_{0}^n$. In addition, if $\mu$ is invariant under 
a closed subgroup $G$ of $O(n)$ acting as the identity on $L^\bot$, then $K$ can be chosen to be invariant under $G$.
\end{theorem}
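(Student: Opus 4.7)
The plan is to split into the two cases of the hypothesis. Case (a), where $\supp\mu$ linearly spans $\R^n$, is the content of Theorem~\ref{theo01} (Chen--Li--Zhu), which produces a convex body $K\in\mathcal{K}_0^n$ with $S_{K,p}=\mu$. To upgrade to the $G$-equivariant statement, I would rerun the Chen--Li--Zhu variational argument inside the closed subclass of $G$-invariant convex bodies: the underlying $L_p$-type functional and its normalization are $G$-invariant, so a $G$-invariant minimizer still exists and its Euler--Lagrange equation is $S_{K,p}=\mu$.

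Case (b), where $k:=\dim L\leq n-1$ and $\operatorname{pos}\supp\mu\neq L$, is the novel content. I would handle it in two steps. First, identifying $L$ with $\R^k$, the measure $\mu$ lives on $\mathbb{S}^{k-1}=L\cap\mathbb{S}^{n-1}$ and its support linearly spans $L$ by definition of $L$; applying Case (a) in dimension $k$ yields a $G$-invariant body $K_0\in\mathcal{K}_0^k$ inside $L$ satisfying $S_{K_0,p}^{L}=\alpha\mu$ for a convenient constant $\alpha>0$, where the superscript $L$ denotes the intrinsic $L_p$-surface area measure in $\R^k$. The hypothesis $\operatorname{pos}\supp\mu\neq L$ forces $o$ onto the relative boundary of $K_0$: otherwise $h_{K_0}$ would be strictly positive on $\mathbb{S}^{k-1}$ and $\supp S_{K_0,p}^L=\supp S_{K_0}^L$ would positively span $L$, contradicting the hypothesis.

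Second, I would lift $K_0$ to an $n$-dimensional body. Convex separation in $L$ provides a $G$-invariant unit vector $v_0\in L$ with $\langle v_0,u\rangle\geq 0$ for all $u\in\supp\mu$. Guided by the planar prototype (for $\mu=c\delta_{e_1}$ in $\R^2$, the right triangle $\operatorname{conv}\{o,(h,a),(h,-a)\}$ solves the problem), I would form
$$
K=\bigl\{x\in\R^n:\operatorname{proj}_L x\in K_0,\ |\operatorname{proj}_{L^\perp}x|\leq\phi(\operatorname{proj}_L x)\bigr\},
$$
for a concave, positively $1$-homogeneous function $\phi\colon K_0\to[0,\infty)$ with $\phi(o)=0$, chosen $G$-invariantly. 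Then $K\in\mathcal{K}_0^n$, and Euler's identity for $1$-homogeneous $\phi$ makes every outer normal at a boundary point on the ``graph'' $|\operatorname{proj}_{L^\perp}x|=\phi(\operatorname{proj}_L x)$ contain the origin in its supporting hyperplane; hence $h_K=0$ at all such normals, so this portion of $\partial K$ contributes nothing to $S_{K,p}$. For $u\in\supp\mu\subset L$, $h_K(u)=h_{K_0}^L(u)$ and the face of $K$ in direction $u$ is $\{(y,z):y\in F_u^{K_0},\ |z|\leq\phi(y)\}$, yielding $S_{K,p}(\{u\})$ as an explicit $\phi$-dependent multiple of $S_{K_0,p}^L(\{u\})$.

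The main obstacle is the precise choice of $\phi$: the natural candidate $\phi(y)=\varepsilon\langle v_0,y\rangle_+$ gives a thickening factor $\omega_{n-k}\varepsilon^{n-k}\int_{F_u^{K_0}}\langle v_0,y\rangle^{n-k}\,d\mathcal{H}^{k-1}(y)$ that in general varies with $u\in\supp\mu$. Reconciling this with the single scalar $\alpha$ from Step~1 requires either a more elaborate $1$-homogeneous $\phi$ tailored to the facet structure of $K_0$, or a reworking of Step~1 in which $K_0$ is chosen so that Step~2 recovers $\mu$ exactly. In either route, $G$-equivariance is preserved, since the subspace $L$, the vector $v_0$, the function $\phi$, and the body $K_0$ can all be taken $G$-invariantly.
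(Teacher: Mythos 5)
Your case split matches the paper's hypothesis, and your treatment of case~(a) is essentially sound: for $L=\R^n$ one can quote Theorem~\ref{theo01} directly, and the paper indeed proves the $G$-equivariant version by rerunning its own variational scheme (Sections~3--7 and 10) in the class $\mathcal{K}_1^G$. Your opening observation in case~(b) is also correct: since every open hemisphere of $\mathbb{S}^{n-1}\cap L$ receives positive measure from the intrinsic surface area measure of a $k$-dimensional body with $o$ in its relative interior, the hypothesis $\operatorname{pos}\supp\mu\neq L$ does force $o\in\operatorname{relbd}K_0$, and a separating vector $v_0$ exists.

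The gap is in the lifting step, and it is not a missing detail but a structural obstruction. With $K=\{x:\operatorname{proj}_L x\in K_0,\ |\operatorname{proj}_{L^\perp}x|\leq\phi(\operatorname{proj}_L x)\}$, for $\omega\subset L\cap\sfe$ one computes
\begin{equation*}
S_{K,p}(\omega)=\int_{\nu_{K_0}^{-1}(\omega)}\langle y,\nu_{K_0}(y)\rangle^{1-p}\,\kappa_{n-k}\,\phi(y)^{n-k}\,d\mathcal{H}^{k-1}(y),
\end{equation*}
so $dS_{K,p}=\kappa_{n-k}\phi^{n-k}\,dS_{K_0,p}^{L}$ on $L\cap\sfe$. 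To recover $\mu=S_{K_0,p}^L$ up to a scalar you would need $\phi^{n-k}$ to be constant on $\nu_{K_0}^{-1}(\supp\mu)$, but $\phi$ is $1$-homogeneous with $\phi(o)=0$, hence cannot be a nonzero constant on any set of positive $\mathcal{H}^{k-1}$-measure. ``Reworking Step~1'' so that $K_0$ has weighted $L_p$-surface area measure $\mu/(\kappa_{n-k}\phi^{n-k})$ is circular, because $\phi$ is constrained to be concave and homogeneous on a body you have not yet determined. As written, the construction does not close.

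The paper avoids the intrinsic-then-lift route altogether. Setting $\widetilde{L}=L\cap v_0^\bot$ and $d=n-\dim\widetilde{L}$, it places a regular $d$-simplex with vertices $v_0,\dots,v_d$ in $\widetilde{L}^\bot$, takes the order-$(d+1)$ cyclic group $G_0$ generated by the rotation $A$ cycling the $v_i$ and fixing $\widetilde{L}$, and forms the symmetrized measure $\mu_0(\omega)=\sum_{i=0}^{d}\mu(A^i\omega)$ on $\sfe$. One checks that $\mu_0$ gives positive mass to every open hemisphere, so the full-dimensional existence result (Section~10) applies and yields a $\widetilde{G}=\langle G,G_0\rangle$-invariant body $K_0$ with $S_{K_0,p}=\mu_0$. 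Then $K=K_0\cap D(v_0)$, the intersection with the Dirichlet--Voronoi cell of $v_0$ in the $A$-orbit, does the job: the boundary created by cutting with $\partial D(v_0)$ has normals $u$ with $h_K(u)=0$ and hence, because the density $h_K^{1-p}$ vanishes there for $p<1$, contributes nothing to $S_{K,p}$; the symmetry forces the faces of $K_0$ with normals in $\supp\mu$ to lie in $D(v_0)$; and the normals in $\widetilde{L}$ are shared equally by all $d+1$ translates of $D(v_0)$, recovering the factor $1/(d+1)$. This symmetrization-plus-truncation argument replaces the thickening you were trying to build, and it is the same place where $p<1$ enters essentially. If you want to rescue your approach, the move is not to tune $\phi$ but to abandon the cylinder-over-$K_0$ ansatz and exploit the vanishing of $h_K^{1-p}$ on the added boundary, which is exactly what the Dirichlet cell intersection accomplishes.
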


The  assumption in Theorem~\ref{theodens01} can be equivalently stated in term of the subset ${\rm conv}\left(\{o\}\cup\supp\,\mu\right)$ in $\R^n$ (here ${\rm conv} A$ denotes the convex hull of the set $A$). We require that either ${\rm conv}\left(\{o\}\cup\supp\,\mu\right)$ has non-empty interior or, if this is not the case, that ${\rm conv}\left(\{o\}\cup\supp\,\mu\right)$ does not contain $o$ in its relative interior.

The case $p=0$ concerns the cone volume measure. We say that a Borel measure $\mu$ on ${\mathbb S}^{n-1}$ satisfies the subspace concentration condition if for any non-trivial linear subspace $L$ we have
$$
\mu(L\cap {\mathbb S}^{n-1})\leq \frac{{\rm dim}\,L}n\,\mu({\mathbb S}^{n-1}),
$$
and equality holds if and only if there exists a complementary linear subspace $L'$ such that
 $\supp\,\mu\subset L\cup L'$.  B\"{o}r\"{o}czky, Lutwak, Yang, and Zhang \cite{BLYZ}
proved that even cone volume measures are characterized by the subspace concentration condition. The sufficiency part 
has been extended to all Borel measures on ${\mathbb S}^{n-1}$ by  Chen, Li, and Zhu \cite{CSL0}. The part of Theorem~\ref{theodens0} concerning the action of a 
 closed subgroup $G$ of $O(n)$ is not actually in \cite{CSL0} but could be verified easily using the methods of our paper.

\begin{theorem}[Chen, Li, Zhu]
\label{theodens0}
If  $\mu$ is a Borel measure  on ${\mathbb S}^{n-1}$  satisfying the subspace concentration condition, then $\mu$ 
is the $L_0$-surface area measure of a convex body  $K\in{\mathcal K}_{0}^n$. In addition, 
if $\mu$ is invariant under a closed subgroup $G$ of $O(n)$, then $K$ can be chosen to be invariant under $G$.
\end{theorem}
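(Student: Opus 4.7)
\emph{Variational approach.} I would prove the theorem by a variational method in the spirit of B\"or\"oczky--Lutwak--Yang--Zhang \cite{BLYZ}. Set
$$
\Psi_\mu(K) = \max_{\xi\in\inte K}\int_{\sfe} \log h_{K-\xi}(u)\, d\mu(u),
$$
where the maximum is attained because $\xi\mapsto \log(h_K(u)-\langle \xi,u\rangle)$ is concave on $\inte K$ and diverges to $-\infty$ as $\xi$ approaches $\partial K$. Minimize $\Psi_\mu$ over convex bodies $K$ with $V(K)=1$, and for the invariance statement, restrict the minimization to $G$-invariant such $K$. Once a minimizer $K$ is produced, translate it so that the optimal $\xi$ is $o$; the first-order conditions in $\xi$ and in the support function are
$$
\int_\sfe \frac{u}{h_K(u)}\,d\mu(u) = 0,\qquad \int_\sfe \frac{\varphi}{h_K}\,d\mu = \lambda\int_\sfe \varphi\,dS_K \ \text{for all }\varphi\in C(\sfe),
$$
yielding $\mu = \lambda h_K S_K = \lambda S_{K,0}$. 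Inserting $\varphi = h_K$ and using $\int h_K\,dS_K = n V(K) = n$ gives $\lambda = |\mu|/n$, and a final scaling $K\to cK$ with $c^n = \lambda$ produces $\mu = S_{cK,0}$.

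\emph{Existence and the role of subspace concentration.} A minimizing sequence $\{K_i\}$, after normalization (e.g.\ by reduction to John position), is contained in a fixed ball, and the Blaschke selection theorem provides a Hausdorff limit. The main obstacle is to rule out that the limit is contained in a proper linear subspace or that the maximizer $\xi_i$ escapes to $\partial K_i$; this is precisely where the subspace concentration condition enters. Suppose $K_i$ collapses onto a subspace $M$ of dimension $k<n$ with ``width'' $\e_i\to 0$ in $M^\bot$; the constraint $V(K_i)=1$ forces the extent of $K_i$ within $M$ to grow of order $\e_i^{-(n-k)/k}$. Since $h_{K_i}(u)$ is of order $\e_i$ for $u\in M^\bot\cap\sfe$ and of order $\e_i^{-(n-k)/k}$ for generic $u\in\sfe\setminus M^\bot$, the leading behavior is
$$
\Psi_\mu(K_i)\approx (\log\e_i) \cdot \Bigl[\mu(M^\bot\cap\sfe) - \tfrac{n-k}{k}\,\mu(\sfe\setminus M^\bot)\Bigr].
$$
Setting $L = M^\bot$, the bracket is non-positive (so that $\Psi_\mu(K_i)$ is bounded below) exactly when $\mu(L\cap\sfe)\leq \frac{\dim L}{n}\mu(\sfe)$, i.e.\ the subspace concentration inequality for $L$. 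Under strict inequality the bracket is strictly negative, so $\Psi_\mu(K_i)\to+\infty$ along any degenerating sequence and degeneration is excluded; the equality case is handled by an inductive reducibility argument which uses $\supp\mu\subset L\cup L'$ to split the problem into independent sub-problems on $\sfe\cap L$ and $\sfe\cap L'$ whose product solves the original. The concavity of the $\xi$-variational problem rules out $\xi_i\to\partial K_i$.

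\emph{Invariance under $G$.} When $\mu$ is $G$-invariant, the class of $G$-invariant convex bodies of unit volume is Hausdorff-closed, so the compactness argument restricts cleanly and yields a $G$-invariant minimizer $K$. The first-order conditions then hold for every $G$-invariant continuous test function $\varphi$; since both $\mu$ and $h_K\,S_K$ are $G$-invariant Borel measures on $\sfe$ separated by $G$-invariant continuous functions (which correspond to continuous functions on the quotient $\sfe/G$), the identity $d\mu = dS_{K,0}$ then holds as measures on $\sfe$.
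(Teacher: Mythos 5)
First, a remark on framing: the paper never proves Theorem~\ref{theodens0}. It attributes the existence part to Chen--Li--Zhu~\cite{CSL0} and only remarks that the $G$-invariance assertion ``could be verified easily using the methods of our paper'' (i.e.\ the averaging trick of Proposition~\ref{Euler-LagrangeG}). So there is no internal proof to compare against; I assess your sketch against the strategy of \cite{CSL0} and the machinery in Sections~3--7 of the paper. Your framework is the right one, and several pieces check out: the bracket computation with $L=M^\bot$ does reproduce exactly the SCC inequality for $L$, the Lagrange-multiplier and rescaling step correctly produce $\mu = S_{cK,0}$, and your $G$-invariance argument (minimize over $G$-invariant competitors, derive Euler--Lagrange against $G$-invariant test functions, use that both $\mu$ and $h_K\,dS_K$ are $G$-invariant) is precisely the device of Proposition~\ref{Euler-LagrangeG}.

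There is, however, a genuine gap: the claim that the maximum over $\xi$ is attained in $\inte K$ because the integral ``diverges to $-\infty$ as $\xi\to\partial K$'' is false for general Borel measures satisfying the subspace concentration condition. Concretely, take $\mu=\frac1n\sum_{i=1}^n\delta_{e_i}$, which satisfies SCC and is the cone-volume measure of $[0,1]^n$; for $K=[0,1]^n$ one has
\[
\int_{\sfe}\log h_{K-\xi}\,d\mu=\frac1n\sum_{i=1}^n\log(1-\xi_i),
\]
whose supremum over $\inte K$ equals $0$ and is approached only as $\xi\to o\in\partial K$, never attained in the interior. Without an interior maximizer, the stationarity identity $\int (u/h_K)\,d\mu=0$ and the subsequent Euler--Lagrange step are unavailable. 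The present paper sidesteps exactly this difficulty by replacing $\log t$ with the modified energy $\varphi_\varepsilon(t)$ which blows up like $-t^{-q}$ as $t\to0^+$ (see \eqref{phiepsdef}, Lemma~\ref{insidegood}, Proposition~\ref{xiinside}); but that construction requires $d\mu=f\,d\mathcal{H}^{n-1}$ with $\inf f>0$, which is not available for a general Borel measure. Chen--Li--Zhu resolve it by first treating discrete measures via a variational problem on polytopes with a prescribed finite set of normal directions (cf.\ \cite{Z2,BHZ}), where the translation behaviour is controllable, and then approximating a general measure satisfying SCC by such discrete ones. Your sketch needs one of these devices. A secondary point worth flagging: to legitimize the first-order condition in $K$ one must know that the Wulff perturbation $h_K + t\varphi$ produces a support function whose $t$-derivative equals $\varphi$ \emph{$\mu$-a.e.}, not merely $\mathcal{H}^{n-1}$-a.e.; the paper proves quasi-smoothness of the minimizer (Corollary~\ref{quasi-smooth}) precisely for this purpose, and a corresponding step is needed here.
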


If $p=0$, then not even a conjecture is known concerning which properties may characterize $L_0$-surface area measures.
Note that B\"or\"oczky and  Heged\H{u}s \cite{BoH15}  characterized the restriction of an $L_0$-surface area measure
to a pair of antipodal points.

The main new result of this paper is the following statement regarding the case $p\in(-n,0)$.

\begin{theorem}
\label{theodens-n0}
If $p\in(-n,0)$, and $\mu$ is a non-trivial Borel measure  on ${\mathbb S}^{n-1}$  satisfying (\ref{densityfunction}) for a non-negative  function $f$ in $L_{\frac{n}{n+p}}(\sfe)$, then $\mu$ 
is the $L_p$-surface area measure of a convex body  $K\in{\mathcal K}_{0}^n$. 
In addition, if $\mu$ is invariant under a closed subgroup $G$ of $O(n)$, then $K$ can be chosen to be invariant under $G$.
\end{theorem}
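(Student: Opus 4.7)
I would prove Theorem~\ref{theodens-n0} by a compactness--approximation argument that bootstraps Chou--Wang's existence result for smooth positive densities, with the sharp integrability of $f$ used to obtain uniform a priori estimates. Choose a sequence of smooth strictly positive functions $f_j$ on $\sfe$ with $f_j \to f$ in $L_{n/(n+p)}(\sfe)$ and $\|f_j\|_1\to\|f\|_1$; if $f$ is $G$-invariant, average to make each $f_j$ $G$-invariant. Chou--Wang then furnishes $K_j \in {\mathcal K}_0^n$ with $o\in\inte K_j$ and $S_{K_j,p}=f_j\,{\mathcal H}^{n-1}$, and by symmetrization (or by uniqueness of the smooth solution when available) the $K_j$ can be chosen $G$-invariant.

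The decisive a priori estimate comes from integrating the equation against $h_{K_j}$, using $h_{K_j}^{1-p}\,dS_{K_j}=f_j\,d{\mathcal H}^{n-1}$, to get
\[
nV(K_j) \;=\; \int_{\sfe} h_{K_j}\, dS_{K_j} \;=\; \int_{\sfe} h_{K_j}^{\,p}\, f_j\, d{\mathcal H}^{n-1}.
\]
H\"older's inequality with the conjugate pair $(n/(n+p),\,n/(-p))$ produces
\[
nV(K_j) \;\le\; \|f_j\|_{n/(n+p)} \Bigl( \int_{\sfe} h_{K_j}^{-n}\, d{\mathcal H}^{n-1} \Bigr)^{-p/n} \;=\; \|f_j\|_{n/(n+p)}\, \bigl( n\, V(K_j^{\circ}) \bigr)^{-p/n},
\]
where $K_j^{\circ}$ denotes the polar body. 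The exponent $n/(n+p)$ is chosen precisely so that the conjugate exponent pairs $h^p$ with $h^{-n}$. Combined with the Blaschke--Santal\'o inequality (after a translation to the Santal\'o point, if necessary) this yields an a priori upper bound on $V(K_j)$ depending only on $\|f\|_{n/(n+p)}$. Complementary lower bounds preventing $K_j$ from collapsing follow from $\int h_{K_j}^{1-p}\,dS_{K_j}=\|f_j\|_1\to\|f\|_1>0$ together with $1-p>0$ and isoperimetric-type inequalities, giving uniform control on inradius and circumradius.

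With these uniform bounds, Blaschke selection extracts a Hausdorff-convergent subsequence $K_{j'}\to K^{*}\in{\mathcal K}_0^n$, and $G$-invariance passes to the limit automatically. On $\{h_{K^{*}}>0\}$, uniform convergence of $h_{K_{j'}}$, weak convergence of $dS_{K_{j'}}$, and $L_{n/(n+p)}$-convergence of $f_j\to f$ combine to identify $h_{K^{*}}^{1-p}\,dS_{K^{*}}=f\,d{\mathcal H}^{n-1}$, i.e., $S_{K^{*},p}=\mu$.

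The principal obstacle is this final identification step: one must rule out that the limit $K^{*}$ is degenerate or that mass of $S_{K_{j'},p}$ leaks in a neighbourhood of the zero set of $h_{K^{*}}$ (corresponding to $o$ being positioned badly on $\partial K^{*}$). The sharp exponent $n/(n+p)$ is exactly what the H\"older step above requires: it pairs $h^p$ with $h^{-n}$, the integrand of the polar volume, and this is the unique pairing under which Blaschke--Santal\'o closes the loop to a bound on $V(K_j)$. Any strictly weaker integrability assumption would break this balance, permitting limit configurations in which the measure identification $S_{K^{*},p}=\mu$ cannot survive, so the most delicate part of the proof is making the compactness and limit argument quantitatively match the critical H\"older exponent.
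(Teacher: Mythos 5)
Your opening moves match the paper's: approximate $f$ by bounded, bounded-below densities (the paper truncates, $f_m=\min(m,\max(f,1/m))$, rather than smoothing, but this is cosmetic), invoke the Chou--Wang existence result from Theorem~\ref{theodens} to get solutions $K_m$, and then pass to a Blaschke limit. You have also correctly identified that the exponent $n/(n+p)$ is exactly the H\"older conjugate pairing $h^p$ with $h^{-n}$, which is the conceptual heart of the hypothesis. However, the central a priori estimate you propose has a real gap. The identity $nV(K_j)=\int h_{K_j}\,dS_{K_j}=\int h_{K_j}^p f_j\,d\mathcal H^{n-1}$ is correct but \emph{origin-specific}: the middle expression is translation invariant, but the right-hand side is not, because $dS_{K_j,p}=h_{K_j}^{1-p}\,dS_{K_j}$ is computed with the origin at the particular point where the equation $S_{K_j,p}=f_j\,\mathcal H^{n-1}$ holds. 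Your parenthetical ``after a translation to the Santal\'o point, if necessary'' is therefore not available: translating $K_j$ destroys the identity you just wrote, and without that translation the polar volume $\int h_{K_j}^{-n}\,d\mathcal H^{n-1}=nV(K_j^\circ)$ can be arbitrarily large (indeed infinite when $o\in\partial K_j$, which can genuinely happen for $p\in(2-n,0)$), so Blaschke--Santal\'o does not close the loop. Even granting a volume bound, it would not control the circumradius (a long thin body has small volume), which is what Blaschke selection requires; the ``isoperimetric-type'' control you invoke to prevent collapse is not a substitute, since $\int h^{1-p}\,dS_K=\|f_j\|_1$ only lower-bounds the surface area given a circumradius bound, not the other way around.

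The paper escapes both difficulties by working with the support function $h_{K_m-\sigma(K_m)}$ \emph{centered at the centroid} throughout the a priori estimate (Lemma~\ref{centroid}(ii) gives a Blaschke--Santal\'o-type inequality at the centroid, which is always interior), and by replacing your direct integral identity with the variational lower bound from Lemma~\ref{Phicentroid}: comparison with a ball in the minimization problem forces $-V(K_m)^{|p|/n}\int h_{K_m-\sigma(K_m)}^p\,d\mu_m$ to stay bounded away from zero. Lemma~\ref{Pmbounded-n0} then shows that if the circumradius $R_m\to\infty$, this quantity tends to $0$: it splits $\sfe$ into a shrinking band $\Xi_m=\{|\langle u,v_m\rangle|\le t_m\}$ around the equator perpendicular to the long axis and its complement, applies the H\"older pairing $(n/|p|,\,n/(n+p))$ \emph{on $\Xi_m$ only} so that $\int_{\Xi_m}f^{n/(n+p)}\to 0$ by absolute continuity as $\mathcal H^{n-1}(\Xi_m)\to 0$ (this is precisely where $f\in L_{n/(n+p)}$ enters), and on the complement uses that $h_{K_m-\sigma(K_m)}\gtrsim R_m t_m$ together with $V(K_m)\lesssim R_m^{n-1}$. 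Your proposal lacks both the centroid-centering and the variational lower bound, so as written the compactness argument does not go through.
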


It is not clear whether  the analogue of Theorem~\ref{theodens-n0} can be expected in the critical case $p=-n$. 
If $\partial K$ is $C^2_+$ and $o\in{\rm int}\,K$, then $L_{-n}$ surface area measure is
\begin{equation}
\label{SK-n}
dS_{K,-n}=\frac{h_K(u)^{n+1}}{\kappa(u)}\,d\mathcal{H}^{n-1},
\end{equation}
where $\kappa(u)$ is the Gaussian curvature of $\partial K$ at the point $x\in\partial K$ with $u\in\nu_K(x)$. 
Note that 
$\kappa_0(u)=\kappa(u)/h_K(u)^{n+1}$ is the so called  centro-affine curvature (see Ludwig \cite{LU2} or Stancu \cite{ST3}), which is equi-affine invariant in the following sense. 
For any $A\in {\rm SL}(n)$, if $\tilde{A}(u)=\frac{Au}{\|Au\|}$ is the corresponding projective transformation of ${\mathbb S}^{n-1}$, and $\tilde{\kappa}_0$ is the   centro-affine curvature function of $A^{-t}K$, then
$$
\tilde{\kappa}_0(\tilde{A}(u))=\kappa_0(u),\quad\forall\, u\in\sfe.
$$
In particular, Chou and Wang \cite{CW} proved the following formula for the $L_{-n}$ surface area measure.

\begin{prop}[Chou and Wang]
\label{theodensp-n}
Let $K\in{\mathcal K}_{0}^n$ be such that $o\in{\rm int}\,K$ and $\partial K$ is $C^3_+$, so that
$dS_{K,-n}=f\,d{\mathcal H}^{n-1}$ 
for a $C^1$ function $f$ according to \eqref{SK-n}. If $\mathcal{V}(\xi)=\xi_jA^{ij}\partial_i$ is a projective vector field on ${\mathbb S}^{n-1}$ for $A\in {\rm GL}(n)$, then
$$
\int_{{\mathbb S}^{n-1}}h^{-n}\,\mathcal{V}f\,d {\mathcal H}^{n-1}=0.
$$
\end{prop}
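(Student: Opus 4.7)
The plan is to reduce everything to a computation on the closed manifold $\mathbb{S}^{n-1}$ by integration by parts, and then to invoke the Euclidean divergence theorem on $K$ to evaluate the resulting integrals and watch them cancel.

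The first step is to replace $\mathcal{V}$ by its tangential projection $\tilde{\mathcal{V}}(u):=Au-\langle u,Au\rangle u$, which is a genuine tangent vector field on $\mathbb{S}^{n-1}$ and is the infinitesimal generator of the flow of projective transformations $\Phi_t(u)=e^{tA}u/\|e^{tA}u\|$. Since $f$ is a function on $\mathbb{S}^{n-1}$ (extend it $0$-homogeneously if needed), $\mathcal{V}f$ and $\tilde{\mathcal{V}}(f)$ agree. Integrating by parts on $\mathbb{S}^{n-1}$ (no boundary) gives
$$\int_{\mathbb{S}^{n-1}} h^{-n}\,\mathcal{V}f\,d\mathcal{H}^{n-1} = -\int_{\mathbb{S}^{n-1}} f\,\operatorname{div}(h^{-n}\tilde{\mathcal{V}})\,d\mathcal{H}^{n-1}.$$

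Next I compute the spherical divergence $\operatorname{div}\tilde{\mathcal{V}}$ directly from the Jacobian of $\Phi_t$, which equals $e^{t\operatorname{tr}A}/\|e^{tA}u\|^n$; differentiating its logarithm at $t=0$ yields $\operatorname{div}\tilde{\mathcal{V}}(u)=\operatorname{tr}A-n\langle u,Au\rangle$. Substituting $f=h^{n+1}/\kappa$ (so that $fh^{-n-1}=1/\kappa$ and $fh^{-n}=h/\kappa$), using $dS_K=d\mathcal{H}^{n-1}/\kappa$, and expanding $\operatorname{div}(h^{-n}\tilde{\mathcal{V}})$ by the product rule, the right-hand side becomes
$$n\int \tilde{\mathcal{V}}(h)\,dS_K-\operatorname{tr}A\int h\,dS_K+n\int h\,\langle u,Au\rangle\,dS_K.$$
The middle term equals $n\operatorname{tr}A\cdot V(K)$ via the classical identity $\int h\,dS_K = nV(K)$.

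For the first term, since the gradient $\nabla h(u)$ of the $1$-homogeneous extension of $h$ is the boundary point $x(u)\in\partial K$ with outer unit normal $u$, one has $\tilde{\mathcal{V}}(h)(u)=\langle Au,x(u)\rangle-\langle u,Au\rangle h(u)$. Pulling $\int\langle Au,x(u)\rangle\,dS_K(u)$ back to $\partial K$ via the Gauss map and applying the Euclidean divergence theorem to the linear vector field $Y(x)=A^Tx$ (whose divergence is $\operatorname{tr}A$ and whose normal component is $\langle Y,\nu\rangle=\langle x,A\nu\rangle$) produces the Minkowski-type identity
$$\int_{\mathbb{S}^{n-1}}\langle Au,x(u)\rangle\,dS_K(u)=\operatorname{tr}A\cdot V(K).$$
Inserting this, the three contributions cancel in pairs and the integral vanishes.

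The argument is driven by these two clean identities — the explicit form of $\operatorname{div}\tilde{\mathcal{V}}$ and the divergence-theorem identity for $A^Tx$ — and the regularity hypotheses ($f\in C^1$, $h\in C^2$) make the integration by parts legitimate. The only point to watch is the radial correction $-\langle u,Au\rangle h$ produced by the $1$-homogeneity of $h$, which is exactly what cancels the $-n\langle u,Au\rangle$ piece of the spherical divergence; keeping track of the tangential-versus-normal decomposition of $\mathcal{V}$ and of this radial correction is the only nontrivial bookkeeping.
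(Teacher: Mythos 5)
Your proof is correct, and it takes a genuinely different route from the paper. The paper proves the more general Lemma~\ref{fp}: for every $p\neq 0$,
$$\int_{{\mathbb S}^{n-1}} u_i h^p\,\partial_j f_p\,du = -(n+p)V(K)\delta_{ij},\qquad f_p=h^{1-p}\tilde f,$$
via the ``homogeneous contour integral'' framework of \cite{Yan10}: it rewrites the spherical integral as a degree-$(-n)$ homogeneous integral on $\R^n\setminus\{o\}$, uses the Legendre-type change of variables $x=\nabla H(\xi)$ with $H=\tfrac12h^2$ (so that $\nabla H$ swaps the roles of $h$ and $\tilde h=h_{K^*}$ and produces the Jacobian $h^{n+1}\tilde f$), integrates by parts twice with Lemma~\ref{parts}, and then sets $p=-n$ so the right-hand side vanishes. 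Your argument stays entirely on the sphere and on $\partial K$: you tangentially project $\mathcal{V}$ to $\tilde{\mathcal V}(u)=Au-\langle u,Au\rangle u$, compute $\operatorname{div}_{\mathbb S^{n-1}}\tilde{\mathcal V}=\operatorname{tr}A-n\langle u,Au\rangle$ from the Jacobian $e^{t\operatorname{tr}A}/\|e^{tA}u\|^n$ of the projective flow, integrate by parts once on the closed manifold ${\mathbb S}^{n-1}$, identify $f\,h^{-n-1}\,d\mathcal{H}^{n-1}=dS_K$ and $f\,h^{-n}\,d\mathcal{H}^{n-1}=h\,dS_K$, and reduce the residue to the two Minkowski-type identities $\int h\,dS_K=nV(K)$ and $\int\langle Au,x(u)\rangle\,dS_K=\operatorname{tr}A\cdot V(K)$ (the latter via the divergence theorem applied to $x\mapsto A^Tx$ on $K$). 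The $-n\langle u,Au\rangle$ piece of $\operatorname{div}\tilde{\mathcal V}$ cancels exactly against the radial correction from $\tilde{\mathcal V}(h)=\langle Au,x(u)\rangle-\langle u,Au\rangle h$, which is the crux.

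What each buys: the paper's route yields the full Lemma~\ref{fp} for all $p\neq0$ as a byproduct, which is a stronger and independently useful statement, but it leans on the less familiar $K$/$K^*$ duality machinery. Your route is shorter, elementary, and conceptually transparent — it makes visible that the vanishing comes from the projective Jacobian interacting with the $1$-homogeneity of $h$ — but it is tailored to the single exponent $p=-n$ where the cancellation is exact. Both are sound; I would just flag, for a polished write-up, that you should state explicitly that the Gauss map $\nu_K:\partial K\to{\mathbb S}^{n-1}$ is a $C^2$ diffeomorphism under the $C^3_+$ hypothesis (this is what legitimizes $dS_K=\kappa^{-1}\,d\mathcal{H}^{n-1}$ and the pull-back to $\partial K$), and that $f$ is being treated via its $0$-homogeneous extension when you write $\mathcal{V}f$.
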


For the sake of completeness, we provide a proof of Proposition~\ref{theodensp-n} in Section~\ref{secCritical}.

We will prove Theorems~\ref{theodens01} and \ref{theodens-n0}  via an approximation argument based on
Theorem~\ref{theodens}, proved by Chou and Wang \cite{CW}. Of the latter, we will also provide a simplified and clarified argument.
Again, the part of Theorem~\ref{theodens} concerning the action of a 
 closed subgroup $G$ of $O(n)$ is not actually in \cite{CSL0} but could be verified easily using the methods of our paper.

\begin{theorem}[Chou and Wang]
\label{theodens}
If $p\in(-n,1)$, and $\mu$ is a Borel measure  on ${\mathbb S}^{n-1}$  satisfying (\ref{densityfunction}) where $f$ is bounded and $\inf_{u\in {\mathbb S}^{n-1}} f(u)>0$, then $\mu$ 
is the $L_p$-surface area measure of a convex body  $K\in{\mathcal K}_{0}^n$. 
In addition, if $\mu$ is invariant under the closed subgroup $G$ of $O(n)$, 
then $K$ can be chosen to be invariant under $G$, and $o\in{\rm int}\,K$ provided $p\in (-n,2-n]$.
\end{theorem}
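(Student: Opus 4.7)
My plan is to prove the theorem via a constrained minimization of the functional
\[
\Phi_\mu(K) = \int_\sfe h_K^p \, d\mu
\]
over the class $\mathcal{M} = \{K \in \mathcal{K}_0^n : V(K) = \omega_n\}$, where $\omega_n$ is the volume of the unit ball. Perturbing the support function $h_K \mapsto h_K + t\phi$ via Wulff shapes produces the first variations
\[
\frac{d}{dt}\bigg|_{t=0} \Phi_\mu = p\int_\sfe \phi\, h_K^{p-1}\,d\mu, \qquad \frac{d}{dt}\bigg|_{t=0} V = \int_\sfe \phi \, dS_K,
\]
so at a minimizer a Lagrange multiplier yields $p\,h_K^{p-1}\,d\mu = \lambda\,dS_K$, equivalently $dS_{K,p} = c\,d\mu$ with $c = p/\lambda > 0$; the homothety $K \mapsto c^{-1/(n-p)}K$ then absorbs $c$ since $S_{K,p}$ is homogeneous of degree $n-p > 0$.

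The central technical task is to show the infimum is attained at a full-dimensional $K_0 \in \mathcal{M}$. For a minimizing sequence $K_m \in \mathcal{M}$, using $0 < c_0 \le f \le c_1$ together with the volume constraint, I would establish a uniform diameter bound: if $\mathrm{diam}(K_m) \to \infty$, then John's theorem forces some width of $K_m$ to vanish, and $\Phi_\mu(K_m)$ diverges --- through the growing support function in the elongated direction when $p \in (0,1)$, and through the singular behaviour of $h_{K_m}^p$ where the width vanishes when $p \in (-n, 0)$. The diameter bound together with $V(K_m) = \omega_n$ and John's theorem also gives non-collapse: $K_m$ contains a ball of uniformly positive radius. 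Blaschke selection extracts a Hausdorff-convergent subsequence with limit $K_0 \in \mathcal{M}$, and lower semicontinuity of $\Phi_\mu$ identifies $K_0$ as a minimizer. Standard Monge--Amp\`ere regularity, applied to $h^{1-p}\det(\nabla^2 h + hI) = c^{-1}f$ wherever $h$ is positive, upgrades $K_0$ to a classical $C^2_+$ solution there.

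For the $G$-equivariant statement, I would restrict the minimization to the closed subfamily $\mathcal{M}^G \subset \mathcal{M}$ of $G$-invariant bodies; the a priori estimates above survive verbatim and produce a $G$-invariant minimizer $K_0$. The Euler--Lagrange identity then a priori holds only against $G$-invariant test functions, but because $S_{K_0,p}$ and $\mu$ are both $G$-invariant, averaging an arbitrary test function $\psi$ over $G$ to $\bar\psi$ gives $\int \psi\,dS_{K_0,p} = \int \bar\psi\,dS_{K_0,p}$ and likewise for $\mu$, so the full identity $dS_{K_0,p} = c\,d\mu$ holds as measures. For the closing claim $o \in \inte K$ when $p \in (-n, 2-n]$, I would argue that the Euler--Lagrange relation $dS_{K_0} = c^{-1} f\, h_{K_0}^{p-1}\,d\mathcal{H}^{n-1}$ together with the finiteness of the total surface area forces $\int_\sfe h_{K_0}^{p-1}\,d\mathcal{H}^{n-1} < \infty$; a codimension analysis of the spherical image of any face of $K_0$ through $o$ shows that this integral diverges precisely when $p \le 2-n$, ruling out $o \in \partial K_0$.

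The main obstacle I anticipate is the uniform diameter bound in the case $p \in (-n, 0)$: a minimizing sequence can in principle both elongate in one direction and drift $o$ toward an opposite face of $K_m$, and these two singular tendencies of $\Phi_\mu$ must be carefully combined to force divergence. The integrability threshold $p > -n$ is the natural borderline of this analysis and is why the theorem is restricted to $p \in (-n, 1)$.
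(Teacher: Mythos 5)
Your variational framework is close in spirit to the paper's, but there is a sign error in the choice of energy for $p\in(-n,0)$ that makes the proposed minimization ill-posed. For $p<0$ the integrand $h_K^p=h_K^{-|p|}$ is positive, and the infimum of $\int_\sfe h_K^p\,d\mu$ over $\{V(K)=\omega_n\}$ is $0$, never attained. Concretely, for $n=2$, $p\in(-2,0)$ and $d\mu=d\mathcal H^1$, take the area-normalized ellipses $K_a$ with semi-axes proportional to $a$ and $a^{-1}$, so that $h_{K_a}(\theta)\sim\sqrt{a^2\cos^2\theta+a^{-2}\sin^2\theta}$. A direct estimate (split the integral at $|\theta-\pi/2|\sim a^{-2}$) gives $\int_{S^1}h_{K_a}^p\,d\mathcal H^1\to 0$ as $a\to\infty$: the set where $h_{K_a}$ is small shrinks fast enough that the putative ``singular blow-up'' you rely on never materializes, and elongation \emph{lowers} your functional rather than sending it to $+\infty$. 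The same happens for ellipsoids in every dimension. The correct direction for $p<0$ is the opposite one: minimize $\int\varphi(h_K)\,d\mu$ with $\varphi(t)=-t^p$ increasing (equivalently, \emph{maximize} $\int h_K^p\,d\mu$), which is precisely what the paper does by building a single increasing, strictly concave $\varphi$ for all $p\in(-n,1)$; the Lagrange multiplier then has the right sign because the problem actually has a solution.

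A second, independent gap lies in the passage to the Euler--Lagrange equation. The first-variation formula $\frac{d}{dt}\big|_{t=0}\Phi_\mu = p\int\phi\,h_K^{p-1}\,d\mu$ for Wulff-shape perturbations is not automatic: for $t<0$ (equivalently, for $\phi$ a genuine difference of support functions) the map $t\mapsto h_{K_t}(u)$ is differentiable only at those $u$ that are exterior normals at smooth points of $\partial K$ (Lemma~\ref{Wulffvariation}(ii)), and a priori the minimizer can have a set of positive measure of directions that are normals only at singular points. The paper must first prove the extremal body is quasi-smooth, via a cutting-and-renormalizing variation (Proposition~\ref{improve}) combined with Alexandrov's volume-variation Lemma~\ref{Alexandrov}, before the Euler--Lagrange identity can be tested against arbitrary $h_C$. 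There is also the question of integrability of $h_K^{p-1}$ against $\mu$ near $\{h_K=0\}$ when $o\in\partial K$; this is the reason the paper replaces $\varphi$ by a regularized $\varphi_\varepsilon$ equal to $-t^{-q}$ for small $t$, works with an optimized interior center $\xi(K)$, and only removes $\varepsilon$ at the very end. Your sketch for $p\in(0,1)$ has the right skeleton, but would still need these two repairs; for $p\in(-n,0)$ the sign of the energy must be reversed before anything else can go through.
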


\bigskip

\noindent 
{\bf Remark } Theorems~\ref{theodens01}, \ref{theodens0} and \ref{theodens-n0}
 show that Theorem~\ref{theodens} holds for any $p\in(-n,1)$ and non-negative bounded $f$
with $\int_{{\mathbb S}^{n-1}}f\,d\mathcal{H}^{n-1}>0$.\\

As already mentioned, if $p=0$, then B\"or\"oczky and Heged\H{u}s \cite{BoH15} provides some necessary condition on an $L_0$ surface area measure, more precisely, on the restriction of an $L_0$-surface area measure to pairs of antipodal points. Unfortunately, no necessary condition concerning $L_p$-surface area measures is known to us for the case $p<0$.

We conclude by mentioning the related paper by G.~Bianchi, K.~J.~B\"or\"oczky and A.~Colesanti \cite{BiBoCo} which deals with the strict convexity and the $C^1$ smoothness of the solution to the $L_p$ Minkowski problem when $p<1$ and $\mu$ satisfies~\eqref{densityfunction} for some function $f$ which is bounded from above and from below by positive constants.

\section{Preparation}

Let $\kappa_n$ be the volume of the $n$-dimensional unit Euclidean ball $B^n$, and let $\sigma(K)$ be 
the \emph{centroid} of a convex body $K$. 

\begin{lemma}
\label{centroid}
For a convex body $K$ in $\R^n$,
\begin{description}
\item[(i)] $ \frac{-1}n(x-\sigma(K))+\sigma(K)\in K$ for any $x\in K$;
\item[(ii)] (Blaschke-Santal\'o inequality)
$$
\int_{{\mathbb S}^{n-1}}\frac1{n(h_K(u)-\langle \sigma(K),u\rangle)^n}\,d{\mathcal H}^{n-1}(u)\leq\frac{\kappa_n^2}{V(K)}.
$$
\item[(iii)] If $\varrho>0$ is maximal and $R>0$ is minimal such that $\sigma(K)+\varrho\,B^n\subset K$ and 
$K\subset\sigma(K)+R\,B^n$, then
$$
V(K)\leq (n+1)\kappa_{n-1}\varrho R^{n-1}.
$$
\end{description}
\end{lemma}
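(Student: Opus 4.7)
The three parts are naturally linked: (i) is the classical Minkowski/Hammer centroid inclusion, which is used to derive (iii), while (ii) is the Blaschke--Santal\'o inequality after recentering at the centroid. I would treat them in the given order.

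For (i), translating by $-\sigma(K)$ reduces the claim to showing $-\tfrac1n K\subseteq K$ when $\sigma(K)=o$, equivalently $h_K(-u)\le n\,h_K(u)$ for every $u\in\mathbb S^{n-1}$. Fix such a $u$ and set $a=h_K(-u)$, $b=h_K(u)$, together with
$$
f(t)=\mathcal H^{n-1}(K\cap\{x\in\R^n:\langle x,u\rangle=t\}),\qquad t\in[-a,b].
$$
By the Brunn--Minkowski inequality, $f^{1/(n-1)}$ is concave on $[-a,b]$, and the centroid condition is $\int_{-a}^b t\,f(t)\,dt=0$. A Berwald-type moment inequality for concave functions then gives
$$
\frac{\int_{-a}^{b}t\,f(t)\,dt}{\int_{-a}^{b}f(t)\,dt}\le\frac{nb-a}{n+1},
$$
the extremal profile being the linear $g_0(t)=t+a$ (for which $f_0(t)\propto(t+a)^{n-1}$ has centroid exactly $\frac{nb-a}{n+1}u$). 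Combined with $\int t\,f\,dt=0$ this forces $a\le nb$.

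For (ii), set $K'=K-\sigma(K)$, so that $\sigma(K')=o$ and in particular $o\in\operatorname{int}K'$. The Blaschke--Santal\'o inequality, stated as
$$
\int_{\mathbb S^{n-1}}\frac{d\mathcal H^{n-1}(u)}{n\,h_{K'}(u)^n}\le\frac{\kappa_n^2}{V(K')},
$$
turns into the asserted bound once one substitutes $h_{K'}(u)=h_K(u)-\langle\sigma(K),u\rangle$ and $V(K')=V(K)$.

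For (iii), translate so $\sigma(K)=o$; then $\varrho=\min_{u\in\mathbb S^{n-1}}h_K(u)$ and $R=\max_{u\in\mathbb S^{n-1}}h_K(u)$, and $K\subseteq RB^n$. Pick $u_0\in\mathbb S^{n-1}$ with $h_K(u_0)=\varrho$. Applying (i) in the direction $u_0$ yields $h_K(-u_0)\le n\varrho$, so the extent of $K$ along $u_0$ is at most $(n+1)\varrho$. On the other hand, every slice $K\cap\{\langle x,u_0\rangle=t\}$ is contained in an $(n-1)$-dimensional ball of radius at most $R$, hence has $(n-1)$-measure at most $\kappa_{n-1}R^{n-1}$. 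Integrating this bound in $t$ gives $V(K)\le(n+1)\varrho\,\kappa_{n-1}R^{n-1}$. The principal technical hurdle is the moment inequality underlying (i); parts (ii) and (iii) are short consequences.
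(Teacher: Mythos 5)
Your proposal is correct, and for parts (ii) and (iii) it follows essentially the same route as the paper. For (ii) the paper likewise recenters at the centroid and reads the integral as the volume of the polar body $K^*$, whose Santal\'o point is the origin, so that Blaschke--Santal\'o gives the bound. For (iii) the paper also assumes $\sigma(K)=o$, takes the touching point $x_0\in \varrho B^n\cap\partial K$ and the common supporting hyperplane $H$, observes via (i) that $K$ lies between $H$ and $-nH$ (distance $(n+1)\varrho$), and bounds the orthogonal projection into $x_0^\bot$ by $RB^n$; this is exactly your slicing/Fubini argument written with the supporting hyperplane and the direction $u_0$ of minimal support value playing the same role.

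The one genuine difference is (i). The paper treats (i) (and (ii)) as standard facts and simply cites Schneider, Lemma~2.3.3 and (10.28), whereas you sketch a proof from scratch via Brunn--Minkowski and a moment inequality for functions with concave $(n-1)$-th root. That is indeed the classical route to the Radon--Minkowski centroid theorem, and your computation that the cone profile $f_0(t)\propto(t+a)^{n-1}$ has normalized first moment exactly $(nb-a)/(n+1)$ is correct. What your approach buys is self-containedness; what it costs is that the key step, the moment inequality $\int tf\,dt\big/\int f\,dt\le (nb-a)/(n+1)$, is asserted rather than proved. Since everything else in your argument hinges on it, you should either prove it (a standard argument: compare $f$ with the cone profile having the same total mass and apex at $-a$, using concavity of $f^{1/(n-1)}$ to control where the two graphs cross) or cite a precise reference; as written it is the one nontrivial gap. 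Everything else is sound and consistent with the paper.
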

\proof  In the case of the Blaschke-Santal\'o inequality, we note that if the origin is the centroid of $K$, then the left hand side of (ii) is the volume of the polar body $K^*$, and the origin is the Santal\'o point of $K^*$.
Therefore (i) and (ii) are well-known facts, see Lemma 2.3.3 and (10.28) in \cite{SCH}. 

For (iii), we assume that $\sigma(K)=o$. Let $x_0\in \varrho B^n\cap\partial K$, and let $H$ be the common tangent hyperplane to $K$ and $\varrho B^n$ at $x_0$. Since $-x/n\in K$ for any $x\in K$ as $\sigma(K)=o$, we deduce that $K$ lies between the parallel hyperplanes $H$ and $-nH$ whose distance is $(n+1)\varrho$. Note that $x_0$ is orthogonal to $H$. Now the projection of $K$ into $x_0^\bot$ is contained in $RB^n$, we conclude (iii).  Q.E.D.\\

For $v\in {\mathbb S}^{n-1}$ and $\alpha\in(0,\frac{\pi}2]$, let $\Omega(v,\alpha)$ be the family of all $u\in {\mathbb S}^{n-1}$ 
with $\angle(u,v)\leq\alpha$, where $\angle(u,v)$ is the (smaller) angle formed by $u$ and $v$, i.e. their geodesic distance on the unit sphere.
The following lemma is needed to show that with modified ``energy function'' $\varphi_\varepsilon$ (see next section), the optimal ``center'' 
is in the interior. 
 
\begin{lemma}
\label{oboundary}
Let $\varepsilon\in(0,\frac13]$, $R\geq 1$ and $q\geq n-1$; let $K\in{\mathcal K}_0^n$ with $o\in\partial K$ and 
${\rm diam}\,K\leq R$, and let $v$ be an exterior unit normal at $o$.
\begin{description}
\item[(i)] For $\alpha=\arcsin \frac{\varepsilon}{2R}$, if $\xi\in{\rm int}\,K$ with $\|\xi\|<\varepsilon/2$ and $u\in \Omega(v,\alpha)$, then $h_K(u)-\langle\xi,u\rangle<\varepsilon$.
\item[(ii)] If $\delta\in(0,\sin\alpha)$ and $\xi\in{\rm int}\,K$ satisfies $\|\xi\|\leq R\delta$, then 
$$
\int_{\Omega(v,\alpha)} (h_K(u)-\langle\xi,u\rangle)^{-q}\,d{\mathcal H}^{n-1}(u)
\geq \frac{(n-2)\kappa_{n-2}}{2^qR^q}\,\log\frac{\sin\alpha}{\delta}.
$$
\end{description}
\end{lemma}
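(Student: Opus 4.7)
The plan for (i) is to exploit that, because $v$ is an exterior unit normal to $K$ at $o\in\partial K$, every $x\in K$ satisfies $\langle x,v\rangle\le 0$, while $\|x\|\le\operatorname{diam}K\le R$ since $o\in K$ as well. Writing $u=\cos\beta\,v+\sin\beta\,w$ with $\beta=\angle(u,v)\in[0,\alpha]$ and $w\in v^\perp\cap\sfe$, and noting that $\cos\beta\ge 0$ makes the first summand non-positive, one finds
\[
\langle x,u\rangle=\cos\beta\,\langle x,v\rangle+\sin\beta\,\langle x,w\rangle\le R\sin\beta\le R\sin\alpha=\varepsilon/2.
\]
Taking the supremum over $x\in K$ gives $h_K(u)\le\varepsilon/2$, and combined with $\langle\xi,u\rangle\ge-\|\xi\|>-\varepsilon/2$ this yields $h_K(u)-\langle\xi,u\rangle<\varepsilon$.

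For (ii), the same decomposition together with $\|\xi\|\le R\delta$ supplies the pointwise estimate
\[
h_K(u)-\langle\xi,u\rangle\le R\sin\beta+R\delta\le 2R\sin\beta\quad\text{whenever }\sin\beta\ge\delta,
\]
so on the sub-cap $\arcsin\delta\le\beta\le\alpha$ the integrand dominates $(2R\sin\beta)^{-q}$. I would then integrate in spherical coordinates around $v$, in which $d\mathcal{H}^{n-1}(u)=\sin^{n-2}\beta\,d\beta\,d\mathcal{H}^{n-2}(w)$ with $w\in v^\perp\cap\sfe$. Since $q\ge n-1$, the exponent $n-2-q$ is at most $-1$, so $\sin^{n-2-q}\beta\ge\sin^{-1}\beta$ on $(0,\pi/2]$, giving
\[
\int_{\Omega(v,\alpha)}(h_K(u)-\langle\xi,u\rangle)^{-q}\,d\mathcal{H}^{n-1}\ge\frac{\mathcal{H}^{n-2}(\sfe\cap v^\perp)}{(2R)^q}\int_{\arcsin\delta}^{\alpha}\frac{d\beta}{\sin\beta}.
\]
The elementary inequality $\csc\beta\ge\cot\beta$ on $(0,\pi/2]$ then produces $\int_{\arcsin\delta}^\alpha\csc\beta\,d\beta\ge\log(\sin\alpha/\delta)$, which finishes the estimate.

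I do not anticipate any serious obstacle; the argument is a geometric decomposition followed by elementary trigonometric integration. The only minor point is numerical: the spherical-coordinate integration produces the factor $\mathcal{H}^{n-2}(\sfe\cap v^\perp)=(n-1)\kappa_{n-1}$, which for $n\ge 3$ dominates the stated $(n-2)\kappa_{n-2}$, so the inequality in the lemma is a slightly weaker repackaging of what the computation naturally delivers.
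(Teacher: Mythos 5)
Your proof is correct and follows essentially the same route as the paper: the paper first reduces to the extremal body $K'=\{x\in RB^n:\langle x,v\rangle\leq 0\}$, for which $h_{K'}(u)=R\sin\angle(u,v)$, and then estimates the integral by projecting the cap $\Omega(v,\alpha)$ onto an annulus in $v^\perp$, while you obtain the same pointwise bound $h_K(u)\leq R\sin\beta$ directly from the exterior-normal condition and integrate in spherical coordinates around $v$; after the substitution $t=\sin\beta$ your $\sin^{n-2}\beta\,d\beta=t^{n-2}(1-t^2)^{-1/2}\,dt\geq t^{n-2}\,dt$ reproduces the paper's annulus integral, and both computations exploit $q\geq n-1$ in the same way. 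Your observation that the natural constant is $(n-1)\kappa_{n-1}$ rather than the stated $(n-2)\kappa_{n-2}$ is also consistent with the paper's own calculation, which in fact yields the larger constant as well, so the lemma as stated is a slight weakening in both treatments.
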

\proof We may assume that 
$K=\{x\in RB^n:\,\langle x,v\rangle\leq 0\}$, and hence $h_K(u)=R\|u|v^\bot\|=R\sin\angle(u,v)$ if 
$u\in \Omega(v,\frac{\pi}2)$. In particular, $\alpha=\arcsin \frac{\varepsilon}{2R}$ works in (i).

For (ii), if $\delta\in(0,\sin\alpha)$,  $u\in \Omega(v,\alpha)$ with $\|u|v^\bot\|>\delta$, and 
$\|\xi\|<R\delta$,  then 
$h_K(u)-\langle\xi,u\rangle<2R||u|v^\bot\|$. We deduce that
 if $\|\xi\|<R\delta$ for $\xi\in{\rm int}\,K$,  then
\begin{eqnarray*}
\int_{\Omega(v,\alpha)} (h_K(u)-\langle\xi,u\rangle)^{-q}\,d{\mathcal H}^{n-1}(u)&\geq&
\int_{[(\sin\alpha\cdot B^n)\backslash (\delta B^n)]\cap v^\bot}
\frac{1}{2^qR^q\|x\|^{q}}\,d{\mathcal H}^{n-1}(x)\\
&=&\frac{(n-2)\kappa_{n-2}}{2^qR^q}
\int_\delta^{\sin\alpha} t^{n-2-q}\,dt\geq \frac{(n-2)\kappa_{n-2}}{2^qR^q}\,\log\frac{\sin\alpha}{\delta},
\end{eqnarray*}
which in turn yields the lemma. Q.E.D.\\

Let $K$ be  a convex body in $\R^n$. A point $p$ in its boundary is said to be \emph{smooth} if there exists a unique hyperplane supporting $K$ at $p$, and  $p$ is said to be \emph{singular} if it is not smooth.  We write $\partial' K$ and $\Xi_K$ to denote the set of smooth and singular points of $\partial K$, respectively. 
It is well known that $\mathcal{H}^{n-1}(\Xi_K)=0$. We call  $K$ \emph{quasi-smooth} if $\mathcal{H}^{n-1}({\mathbb S}^{n-1}\backslash\nu_K(\partial' K))=0$; 
namely, the set of  $u\in {\mathbb S}^{n-1}$ that are exterior normals only at singular points has $\mathcal{H}^{n-1}$-measure zero.

The following Lemma~\ref{Wulffvariation} will be used to prove first that the extremal convex body $K^\varepsilon$ is quasi-smooth
in Section~\ref{secquasi-smooth}, and secondly that it satisfies an Euler-Lagrange type equation
in Section~\ref{secEuler-Lagrange}. 
Let $K$ and $C$ be convex bodies containing the origin in their interior such that $rC\subset K$ for some $r>0$.
For $t\in(-r,r)$, we consider the \emph{Wulff shape}
$$
K_t=\{x\in\R^n:\,\langle x,u\rangle\leq h_K(u)+t h_C(u) \mbox{ \ for $u\in {\mathbb S}^{n-1}$}\},
$$
and we denote by $h_t$ the support function of $K_t$.

\begin{lemma}
\label{Wulffvariation}
Using the notation above, let $u\in {\mathbb S}^{n-1}$.
\begin{description}
\item[(i)] If $K\subset R\,B^n$ for $R>0$ and $t\in(-r,r)$, then
$|h_t(u)-h_K(u)|\leq \frac{R}r |t|$.
\item[(ii)] If $u$ is the exterior normal at some smooth point $z\in\partial K$, then
$$
\lim_{t\to 0}
\frac{h_t(u)-h_K(u)}{t}=h_C(u).
$$
\end{description}
\end{lemma}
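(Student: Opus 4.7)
Plan. For part (i), I will sandwich $K_t$ between two dilates of $K$. The inclusion $rC\subset K$ is equivalent to $r h_C\le h_K$ on $\sfe$, so for $t\in[0,r)$ one has $h_K\le h_K+t h_C\le (1+t/r)h_K$, while for $t\in(-r,0]$ these inequalities reverse. Interpreting each of these functions as a defining function and intersecting the associated halfspaces yields $K\subset K_t\subset (1+t/r)K$ for $t\ge 0$, and $(1+t/r)K\subset K_t\subset K$ for $t\le 0$. Since $h_K\le R$ on $\sfe$, both cases produce the pointwise estimate $|h_t(u)-h_K(u)|\le (|t|/r)\, h_K(u)\le R|t|/r$, which is (i).

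Part (ii) splits into two one-sided limits, and smoothness is needed only from the left. When $t\ge 0$, the function $h_K+t h_C$ is itself a support function (being the sum of two support functions), so the Wulff shape equals the Minkowski sum $K+tC$ and $h_t(u)=h_K(u)+t h_C(u)$ holds identically; the right derivative at $0$ is $h_C(u)$ with no further work. For $t<0$, set $s=-t\in(0,r)$. The Wulff shape construction gives $h_{-s}(u)\le h_K(u)-s h_C(u)$, hence $(h_{-s}(u)-h_K(u))/(-s)\ge h_C(u)$, which is the easy direction. The real task is the matching upper bound on this quotient as $s\to 0^+$. Given $\varepsilon>0$ and $z_C\in C$ with $\langle z_C,u\rangle=h_C(u)$, the plan is to show that
\[
y_s:= z - s z_C - s\varepsilon u
\]
lies in $K_{-s}$ for all sufficiently small $s>0$. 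Since $\langle y_s,u\rangle=h_K(u)-s h_C(u)-s\varepsilon$, this will give $h_{-s}(u)\ge h_K(u)-s h_C(u)-s\varepsilon$, so that sending $s\to 0^+$ and then $\varepsilon\to 0^+$ finishes (ii).

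Writing $\phi(v):=h_K(v)-\langle z,v\rangle\ge 0$ and $\psi(v):=h_C(v)-\langle z_C,v\rangle\ge 0$, membership $y_s\in K_{-s}$ unfolds into
\[
\phi(v) + s\varepsilon\langle u,v\rangle\ge s\psi(v),\qquad v\in\sfe.
\]
Both $\phi$ and $\psi$ are continuous on $\sfe$ with $\phi(u)=\psi(u)=0$. Smoothness at $z$ enters exactly here: since $u$ is the unique outer normal to $K$ at $z$, $\phi(v)>0$ for every $v\neq u$. I split $\sfe$ into a small spherical cap around $u$ and its complement. On the complement, compactness yields $\phi\ge\phi_0>0$ while $\psi$ is bounded, so the inequality holds for all sufficiently small $s$; on the cap, $\langle u,v\rangle$ is close to $1$ and $\psi(v)$ is small by continuity, so the correction $s\varepsilon\langle u,v\rangle$ dominates $s\psi(v)$ once the cap is taken small enough. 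The main obstacle is that $C$ is not assumed smooth at $z_C$, so $\psi$ may only be $O(\angle(u,v))$ rather than $o(\angle(u,v))$ near $u$; this is precisely the reason the correction term $-s\varepsilon u$ is indispensable, and once it is in place the two-regime verification is routine.
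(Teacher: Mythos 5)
Your proof is correct and takes essentially the same route as the paper's. For (i) you sandwich $K_t$ between $(1+t/r)K$ and $K$ exactly as the paper does, and for (ii) you construct the same perturbed point $y_s = z - s(z_C+\varepsilon u)$ (the paper calls $z_C$ by the name $q$) and show it lies in $K_{-s}$. The only variation is in the verification of $y_s\in K_{-s}$: you check the Wulff-shape inequalities directly via continuity and compactness of $\phi(v)=h_K(v)-\langle z,v\rangle$ and $\psi(v)=h_C(v)-\langle z_C,v\rangle$, using smoothness in the form ``$\phi(v)>0$ for $v\neq u$,'' whereas the paper establishes the slightly stronger inclusion $y_s+sC\subset K$ via an explicit cone-angle estimate (a quantitative consequence of smoothness at $z$); both are sound and interchangeable.
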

\proof If $t\geq 0$ then $h_t=h_K+th_C$, therefore we may assume that $t<0$.

For (i), we observe that
$$
\left(1+\frac{t}r\right)K+|t|C\subset \left(1+\frac{t}r\right)K+\frac{|t|}r\cdot K=K.
$$
In other words, $\widetilde{K}_t=(1+\frac{t}r)K\subset K_t$, which in turn yields that
if $u\in {\mathbb S}^{n-1}$, then
$$
h_K(u)-h_t(u)\leq h_K(u)-h_{\widetilde{K}_t}(u)=\frac{|t|}r\cdot h_K(u)\leq \frac{R}r\cdot|t|.
$$

We turn to (ii). For $u\in {\mathbb S}^{n-1}$, we have $h_K(u)-h_t(u)\geq |t|\,h_C(u)$, and hence 
it is sufficient to prove that if $\varepsilon>0$ then
\begin{equation}
\label{iiWulffvariation}
h_K(u)-h_t(u)\leq (h_C(u)+\varepsilon)|t|
\end{equation}
provided that $t<0$ has small absolute value.
Let $D$ be the diameter of $C$, and let $\delta=\frac{\varepsilon}{\sqrt{D^2+\varepsilon^2}}$.
If $u$ is an exterior normal to $C$ at a point $q\in \partial C$, then $w=q+\varepsilon u$ satisfies
\begin{eqnarray}
\label{KC1}
\langle u,w\rangle &=&h_C(u)+\varepsilon\\
\label{KC2}
\langle u,x-w\rangle&\leq &-\delta\|x-w\| \mbox{ \ for all \ }x\in C.
\end{eqnarray}
Since $z\in\partial K$ is a smooth point with exterior unit normal $u$, there exists $\varrho>0$ such that if $\|x-z\|\leq\varrho$ and 
$\langle u,x-z\rangle\leq -\delta\|x-z\|$, then $x\in K$. We deduce from (\ref{KC2}) that if $(D+\varepsilon)|t|<\varrho$, then $y+|t|C\subset K$ for $y=z-|t|w$, and hence $y\in K_t$. Therefore
$$
h_K(u)-h_t(u)\leq \langle u,z-y\rangle=
 (h_C(u)+\varepsilon)|t|,
$$
proving (\ref{iiWulffvariation}). \ \ Q.E.D. \\

\bigskip

\noindent{\bf Remark.} Results similar to those proved in the previous lemma are contained in 
\cite[Section 3]{Jerison}.

\bigskip

Using the notation of Lemma~\ref{Wulffvariation}, if $K$ is quasi-smooth, then
$$
\lim_{t\to 0}
\frac{h_t(u)-h_K(u)}{t}=h_C(u)
$$
holds for $\mathcal{H}^{n-1}$ almost all $u\in {\mathbb S}^{n-1}$.
In particular, Lemma~\ref{xider} below applies.

\section{The energy function and optimal center}
\label{secenergy}

Let $p\in(-n,1)$. For $t>0$, we set
$$
\varphi(t)=\left\{ 
\begin{array}{ll}
t^p &\mbox{ \ if $p\in(0,1)$,}\\[0.5ex]
\log t&\mbox{ \ if $p=0$,}\\[0.5ex]
-t^p &\mbox{ \ if $p\in(-n,0)$.}
\end{array}
\right.
$$
The reasons behind this choice of $\varphi$ are that if $t\in(0,\infty)$, then
\begin{equation}
\label{phider}
\varphi'(t)=
\left\{ 
\begin{array}{ll}
|p|t^{p-1} &\mbox{ \ if $p\in(-n,1)\backslash \{0\}$}\\[0.5ex]
t^{p-1}&\mbox{ \ if $p=0$}
\end{array}
\right.
\end{equation}
is positive and decreasing, $\varphi$ is strictly increasing and $\varphi''$ is negative and continuous, and hence $\varphi$ is strictly concave.
In addition,
\begin{equation}
\label{philimit}
\lim_{t\to\infty}\varphi(t)=
\left\{ 
\begin{array}{ll}
\infty&\mbox{ \ if $p\in[0,1)$,}\\
0&\mbox{ \ if $p\in(-n,0)$.}
\end{array}
\right.
\end{equation}

Let $q=\max\{|p|,n-1\}$. In order to force the ``optimal center" of a convex body $K$ into its interior, we change $\varphi(t)$ into a function of order $-t^{-q}$ if $t$ is small (see Proposition~\ref{xiinside}).
For $t\in(0,1)$, the equation $\psi(s)=-t^{-(n-1)}+(n-1)t^{-n}(s-t)$ of the tangent to the graph of
$t\mapsto -t^{-(n-1)}$ satisfies $\psi(3t)\geq t^{-(n-1)}\geq 1$. Thus for any $\varepsilon\in(0,\frac13)$, there exists an increasing strictly 
concave function $\varphi_\varepsilon:\,(0,\infty)\to\R$, with continuous and negative second derivative, such that
\begin{equation}
\label{phiepsdef}
\varphi_\varepsilon(t)=\left\{ 
\begin{array}{ll}
\varphi(t)&\mbox{ \ if $t\geq 3\varepsilon$,}\\[1ex]
-t^{-q}&\mbox{ \ if $0<t\leq \varepsilon$,}
\end{array}
\right.
\end{equation}
and in addition
\begin{equation}
\label{t01p01}
\varphi_\varepsilon(t)\geq -t^{-q} \mbox{ \ if $t\in(0,1)$.}
\end{equation}
Let us observe that  if $p\in(-n,-(n-1)]$, we may choose $\varphi_\varepsilon=\varphi$.

Let $f$ be a measurable function on ${\mathbb S}^{n-1}$ such that there exist $\tau_2>\tau_1>0$ satisfying
\begin{equation}
\label{fcondition}
\tau_1<f(u)<\tau_2\mbox{ \ for $u\in {\mathbb S}^{n-1}$},
\end{equation}
and let $\mu$ be the Borel measure defined by $d\mu=f\,d\mathcal{H}^{n-1}$. 
\emph{We remark that, even when not explicitly stated, in all the results contained in Sections~3, 4, 5, 6 and 7 it is always assumed that \eqref{fcondition} holds.}

For $\varepsilon\in(0,\frac13)$, a convex body $K$ and $\xi\in{\rm int}\,K$, we define 
$$
\Phi_\varepsilon(K,\xi)=\int_{{\mathbb S}^{n-1}}\varphi_\varepsilon(h_K(u)-\langle u,\xi\rangle)\,d\mu(u).
$$

The proofs of Proposition~\ref{xiinside} and Lemma~\ref{xiKcontinuous} depend on the concavity of $\varphi_\varepsilon$ 
and the following Lemma~\ref{insidegood}. Here and throughout the paper, the convergence of sequence of convex bodies
is always meant in the sense of the Hausdorff metric.

\begin{lemma}
\label{insidegood}
Let $\{K_m\}$ be a sequence of convex bodies tending to a convex body $K$ in $\R^n$, and let
$\xi_m\in{\rm int}\,K_m$ be such that $\lim_{m\to \infty}\xi_m=z_0\in \partial K$. Then
$$
\lim_{m\to \infty}\Phi_\varepsilon(K_m,\xi_m)=-\infty.
$$
\end{lemma}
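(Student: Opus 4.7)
The plan is to exploit that $z_0\in\partial K$: fixing an exterior unit normal $v\in\nu_K(z_0)$, the quantity $h_{K_m}(u)-\langle\xi_m,u\rangle$ must become very small for $u$ in a neighborhood of $v$ when $m$ is large, and the prescribed behavior $\varphi_\varepsilon(t)=-t^{-q}$ with $q\ge n-1$ for $t\le\varepsilon$ then forces the integral to diverge to $-\infty$. Since $h_{K-z_0}(u)-\langle\xi-z_0,u\rangle=h_K(u)-\langle\xi,u\rangle$, the integrand is translation invariant, so I may assume $z_0=o$ from the start.

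After this reduction, I would fix $v\in\nu_K(o)$ and choose $R>0$ with $K_m\subset RB^n$ for all large $m$. The inclusion $K\subset\{x:\langle x,v\rangle\le 0\}$ combined with $K\subset RB^n$ gives $h_K(u)\le R\sin\angle(u,v)$ for $u\in\Omega(v,\pi/2)$ (decomposing $u=\cos\alpha\,v+\sin\alpha\,w$ with $w\perp v$, $\|w\|=1$, as in the proof of Lemma~\ref{oboundary}). Combining this with the uniform estimate $|h_{K_m}(u)-h_K(u)|\le d_H(K_m,K)$ and $|\langle\xi_m,u\rangle|\le\|\xi_m\|$ yields
$$
h_{K_m}(u)-\langle\xi_m,u\rangle \le R\sin\angle(u,v)+\eta_m,\qquad u\in\Omega(v,\pi/2),
$$
where $\eta_m:=d_H(K_m,K)+\|\xi_m\|\to 0$.

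Next I would pick $\alpha_0\in(0,\pi/2)$ with $R\sin\alpha_0\le\varepsilon/2$, so that for $m$ large the argument of $\varphi_\varepsilon$ lies in $(0,\varepsilon]$ throughout the cap $\Omega(v,\alpha_0)$, where $\varphi_\varepsilon(t)=-t^{-q}$. Monotonicity of $\varphi_\varepsilon$ and the lower bound $f\ge\tau_1$ then give
$$
\int_{\Omega(v,\alpha_0)}\varphi_\varepsilon\bigl(h_{K_m}(u)-\langle\xi_m,u\rangle\bigr)\,d\mu(u) \le -\tau_1\int_{\Omega(v,\alpha_0)}\bigl(R\sin\angle(u,v)+\eta_m\bigr)^{-q}\,d\mathcal{H}^{n-1}(u).
$$
As $\eta_m\downarrow 0$, monotone convergence sends the right-hand side to $-\tau_1\int_{\Omega(v,\alpha_0)}(R\sin\angle(u,v))^{-q}\,d\mathcal{H}^{n-1}$; in spherical coordinates centered at $v$ the integrand behaves like $\alpha^{n-2-q}$ as $\alpha\to 0^+$, and since $q=\max\{|p|,n-1\}\ge n-1$ this integral equals $+\infty$. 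On the complement $\sfe\setminus\Omega(v,\alpha_0)$ one has the uniform bound $h_{K_m}(u)-\langle\xi_m,u\rangle\le 2R$, so monotonicity gives $\varphi_\varepsilon\le\varphi_\varepsilon(2R)$ there, bounding that contribution above by an $m$-independent constant. Adding the two pieces forces $\Phi_\varepsilon(K_m,\xi_m)\to-\infty$.

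The most delicate point is the divergence of the limiting integral in the critical case $q=n-1$, where the blow-up is only logarithmic; nevertheless this still suffices because we only need the limit to be $-\infty$, not a quantitative rate. The remaining ingredients, namely Hausdorff convergence of $h_{K_m}$, translation invariance of the integrand, and the piecewise definition of $\varphi_\varepsilon$ near zero, are routine.
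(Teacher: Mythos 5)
Your proof is correct. The overall strategy---isolate a spherical cap around a normal direction where $h_{K_m}(u)-\langle\xi_m,u\rangle$ is uniformly small, show that the integral over that cap diverges to $-\infty$ because $\varphi_\varepsilon(t)=-t^{-q}$ there with $q\ge n-1$, and bound the contribution of the complementary region by a constant---is the same as the paper's. The implementation, however, is genuinely different. The paper fixes $y_m\in\partial K_m$ to be a point of $\partial K_m$ nearest to $\xi_m$ with normal $v_m$, notes that the inradius $r_m=\|y_m-\xi_m\|\to 0$, and then applies the separately prepared Lemma~\ref{oboundary} to the translated bodies $K_m-y_m$ with a cap $\Omega(v_m,\alpha)$ whose \emph{centre moves} with $m$. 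You instead reduce to $z_0=o$ by translation invariance of the integrand, fix a single exterior normal $v\in\nu_K(o)$ of the \emph{limit} body, and propagate the extremal estimate $h_K(u)\le R\sin\angle(u,v)$ (the same geometric fact underlying Lemma~\ref{oboundary}) to $K_m$ via $|h_{K_m}-h_K|\le d_H(K_m,K)$, obtaining a \emph{fixed} cap $\Omega(v,\alpha_0)$. This bypasses Lemma~\ref{oboundary} entirely and is arguably more streamlined. What you lose is the explicit logarithmic lower bound of Lemma~\ref{oboundary}(ii), which you replace by a qualitative divergence argument; that is entirely adequate here since only $\Phi_\varepsilon\to-\infty$ is required. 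One small point to tidy: the phrase ``monotone convergence sends the right-hand side'' should be read as follows---$\eta_m\to 0$ need not be monotone, but since $\eta\mapsto\int_{\Omega(v,\alpha_0)}(R\sin\angle(u,v)+\eta)^{-q}\,d\mathcal{H}^{n-1}$ is decreasing in $\eta$ and tends to $+\infty$ as $\eta\downarrow 0$ by monotone convergence along $\eta\downarrow0$, any sequence $\eta_m\to 0$ drives the integral to $+\infty$ by comparison with a fixed small $\eta^*>0$. With that clarification, the argument is complete.
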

\proof Let $r_m>0$ be maximal such that
 $\xi_m+r_mB^n\subset K_m$, and let $y_m\in(\xi_m+r_mB^n)\cap\partial K_m$. The condition $z_0\in\partial K$
implies that $r_m=\|y_m-\xi_m\|$ tends to zero. Let $v_m\in {\mathbb S}^{n-1}$ be an exterior normal at $y_m$ to $K_m$.
For $R=1+{\rm diam}\,K$, we have ${\rm diam}K_m\leq R$ for large $m$; let 
$\alpha=\arcsin \frac{\varepsilon}{2R}$ be the constant of Lemma~\ref{oboundary}.
It follows
from Lemma~\ref{oboundary} (i) that if $u\in\Omega(v_m,\alpha)$ (the geodesic
ball on $\sfe$, centered at $v_m$ with opening $\alpha$), then
$h_{K_m}(u)-\langle u,\xi_m\rangle<\varepsilon$ for all $m$, and hence
$$
\varphi_\varepsilon(h_{K_m}(u)-\langle u,\xi_m\rangle)=-(h_{K_m}(u)-\langle u,\xi_m\rangle)^{-q}.
$$
Therefore Lemma~\ref{oboundary} (ii) and (\ref{fcondition}) yield that
\begin{equation}
\label{xiKcontinuous1}
\lim_{m\to\infty}\int_{\Omega(v_m,\alpha)}\varphi_\varepsilon(h_{K_m}(u)-\langle u,\xi_m\rangle)\,d\mu(u)
=-\infty.
\end{equation}
On the other hand,  
$\varphi_\varepsilon(h_{K_m}(u)-\langle u,\xi_m\rangle)\leq \varphi_\varepsilon(R)$
holds for all $m$ and $u\in {\mathbb S}^{n-1}$.
We deduce from (\ref{fcondition}) that
\begin{equation}
\label{xiKcontinuous2}
\int_{{\mathbb S}^{n-1}\backslash \Omega(v,\alpha)}\varphi_\varepsilon(h_{K_m}(u)-\langle u,\xi_m\rangle)\,d\mu(u)
<\tau_2n\kappa_n\varphi_\varepsilon(R)
\end{equation}
for all $m$. Combining (\ref{xiKcontinuous1}) and (\ref{xiKcontinuous2}) we conclude the proof. Q.E.D.\\

Now we single out the optimal $\xi\in{\rm int}\,K$.

\begin{prop}
\label{xiinside}
For $\varepsilon\in(0,\frac13)$ and a convex body $K$ in $\R^n$, there exists a unique $\xi(K)\in{\rm int}\,K$ such that
$$
\Phi_\varepsilon(K,\xi(K))=\max_{\xi\in{\rm int}\,K}\Phi_\varepsilon(K,\xi).
$$
\end{prop}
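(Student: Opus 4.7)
The plan is to establish existence by a standard compactness argument---continuity of $\xi\mapsto\Phi_\varepsilon(K,\xi)$ on ${\rm int}\,K$ together with divergence to $-\infty$ at $\partial K$---and then to obtain uniqueness from strict concavity of the same functional.

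For continuity and an upper bound, I would observe that if $\xi\in K$ and $u\in {\mathbb S}^{n-1}$ then $0\leq h_K(u)-\langle u,\xi\rangle\leq {\rm diam}\,K$, so monotonicity of $\varphi_\varepsilon$ together with (\ref{fcondition}) yields $\Phi_\varepsilon(K,\xi)\leq \tau_2\,n\kappa_n\,\varphi_\varepsilon({\rm diam}\,K)<\infty$. For $\xi\in{\rm int}\,K$ there exists $\varrho>0$ with $\xi+\varrho B^n\subset K$, and hence $h_K(u)-\langle u,\xi\rangle\geq \varrho$ for every $u\in {\mathbb S}^{n-1}$; as $\varphi_\varepsilon$ is continuous on $(0,\infty)$, this makes $\Phi_\varepsilon(K,\xi)$ finite, and continuity of $\xi\mapsto\Phi_\varepsilon(K,\xi)$ on ${\rm int}\,K$ follows from dominated convergence. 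Applying Lemma~\ref{insidegood} with $K_m\equiv K$ shows that if $\xi_m\in{\rm int}\,K$ converges to a point of $\partial K$, then $\Phi_\varepsilon(K,\xi_m)\to -\infty$. Therefore every maximizing sequence remains in a compact subset of ${\rm int}\,K$, and continuity, together with the existence of any $\xi_0\in{\rm int}\,K$ yielding a finite value, provides a maximizer $\xi(K)\in{\rm int}\,K$.

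For uniqueness, I would exploit strict concavity of $\xi\mapsto\Phi_\varepsilon(K,\xi)$ on ${\rm int}\,K$. For each fixed $u\in {\mathbb S}^{n-1}$ the map $\xi\mapsto h_K(u)-\langle u,\xi\rangle$ is affine, while $\varphi_\varepsilon$ is strictly concave. Hence for distinct $\xi_1,\xi_2\in{\rm int}\,K$, $t\in(0,1)$, and every $u$ outside the great subsphere $(\xi_1-\xi_2)^\bot\cap {\mathbb S}^{n-1}$, one has
$$
\varphi_\varepsilon\bigl(h_K(u)-\langle u,t\xi_1+(1-t)\xi_2\rangle\bigr)>t\,\varphi_\varepsilon\bigl(h_K(u)-\langle u,\xi_1\rangle\bigr)+(1-t)\,\varphi_\varepsilon\bigl(h_K(u)-\langle u,\xi_2\rangle\bigr).
$$
The exceptional subsphere has ${\mathcal H}^{n-1}$-measure zero, and (\ref{fcondition}) gives $f\geq \tau_1>0$, so integration against $d\mu=f\,d{\mathcal H}^{n-1}$ preserves the strict inequality. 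Hence $\Phi_\varepsilon(K,\cdot)$ is strictly concave on ${\rm int}\,K$, and the maximizer is unique.

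The only nontrivial input is the boundary blow-up supplied by Lemma~\ref{insidegood}, which itself relies on the growth condition $\varphi_\varepsilon(t)=-t^{-q}$ for small $t$ with $q\geq n-1$ that was built into (\ref{phiepsdef}); this is precisely the ingredient needed to force a divergent contribution from the spherical cap around the outer unit normal at the point of $\partial K_m$ closest to $\xi_m$. Modulo that lemma, the argument is a textbook concavity-plus-compactness argument.
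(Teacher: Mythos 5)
Your argument coincides with the paper's: strict concavity of $\xi\mapsto\Phi_\varepsilon(K,\xi)$ (from strict concavity of $\varphi_\varepsilon$, affineness of $\xi\mapsto h_K(u)-\langle u,\xi\rangle$, and $f\geq\tau_1>0$) gives uniqueness, while Lemma~\ref{insidegood} applied with $K_m\equiv K$ forces any maximizing sequence into a compact subset of ${\rm int}\,K$, yielding existence. The only quibble is that the displayed upper bound $\tau_2\,n\kappa_n\,\varphi_\varepsilon({\rm diam}\,K)$ has the wrong constant when $\varphi_\varepsilon({\rm diam}\,K)<0$ (one should then use $\tau_1$), but this is immaterial since all you need is finiteness.
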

\proof Let $\xi_1,\xi_2\in{\rm int}\,K$, $\xi_1\neq\xi_2$, and let $\lambda\in(0,1)$. 
If $u\in {\mathbb S}^{n-1}\backslash(\xi_1-\xi_2)^\bot$, then $\langle u,\xi_1\rangle\neq\langle u,\xi_2\rangle$, and hence
the  strict concavity of $\varphi_\varepsilon$ yields that
$$
\varphi_\varepsilon(h_K(u)-\langle u,\lambda \xi_1+(1-\lambda)\xi_2\rangle)>
\lambda\varphi_\varepsilon(h_K(u)-\langle u,\xi_1\rangle)+
(1-\lambda)\varphi_\varepsilon(h_K(u)-\langle u,\xi_2\rangle).
$$
We deduce from (\ref{fcondition}) that
$$
\Phi_\varepsilon(K,\lambda \xi_1+(1-\lambda)\xi_2)>
\lambda\Phi_\varepsilon(K, \xi_1)+(1-\lambda)\Phi_\varepsilon(K,\xi_2),
$$
thus $\Phi_\varepsilon(K,\xi)$ is a strictly concave function of $\xi\in{\rm int}\,K$.

Let $\xi_m\in{\rm int}\,K$ such that 
$$
\lim_{m\to\infty}\Phi_\varepsilon(K,\xi_m)=\sup_{\xi\in{\rm int}\,K}\Phi_\varepsilon(K,\xi).
$$
We may assume that $\lim_{m\to\infty}\xi_m=z_0\in K$, and Lemma~\ref{insidegood} yields
 $z_0\in{\rm int}\,K$.
Since $\Phi_\varepsilon(K,\xi)$ is a strictly concave function of $\xi\in{\rm int}\,K$, we conclude
Proposition~\ref{xiinside}.
Q.E.D.\\

Since $\xi\mapsto \Phi_\varepsilon(K,\xi)$ is maximal at $\xi(K)\in{\rm int}\,K$, we deduce

\begin{corollary}
\label{intcond}
For $\varepsilon\in(0,\frac13)$ and a convex body $K$ in $\R^n$, we have
$$
\int_{{\mathbb S}^{n-1}}u\ \varphi'_\varepsilon\Big(h_{K}(u)-\langle u,\xi(K)\rangle\Big)\,d\mu(u)=o.
$$
\end{corollary}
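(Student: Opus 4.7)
The plan is to derive the corollary as the first-order (vanishing-gradient) condition at the interior maximum $\xi(K)$ provided by Proposition~\ref{xiinside}. Setting
$$
g(\xi,u)=\varphi_\varepsilon\!\bigl(h_K(u)-\langle u,\xi\rangle\bigr),
$$
the chain rule, together with the fact that $\varphi_\varepsilon$ is $C^1$ on $(0,\infty)$, gives for each fixed $u$
$$
\nabla_\xi g(\xi,u)=-u\,\varphi'_\varepsilon\!\bigl(h_K(u)-\langle u,\xi\rangle\bigr).
$$
If we can pass $\nabla_\xi$ under the integral sign, then $\nabla_\xi\Phi_\varepsilon(K,\cdot)$ vanishes at the interior maximizer $\xi(K)$, and the identity in the statement follows immediately.

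To justify the interchange of differentiation and integration, we exploit that $\xi(K)\in\inte K$. Pick $\delta>0$ with $\xi(K)+2\delta B^n\subset K$; then for every $\xi$ in the open $\delta$-ball around $\xi(K)$ and every $u\in\sfe$,
$$
\delta\;\le\; h_K(u)-\langle u,\xi\rangle\;\le\; \operatorname{diam}K+\|\xi(K)\|+\delta.
$$
Since $\varphi_\varepsilon\in C^1(0,\infty)$ with continuous derivative $\varphi'_\varepsilon$, the function $\varphi'_\varepsilon$ is bounded on the compact interval above by some constant $M=M(K,\varepsilon,\delta)$. Hence for each coordinate $i$, $|\partial_{\xi_i}g(\xi,u)|\le M$ on this neighborhood, uniformly in $u$. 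By \eqref{fcondition} the measure $\mu$ is finite, so the constant $M$ serves as an integrable dominating function, and the Leibniz rule applies. This yields
$$
\nabla_\xi\Phi_\varepsilon(K,\xi)=-\int_{\sfe}u\,\varphi'_\varepsilon\!\bigl(h_K(u)-\langle u,\xi\rangle\bigr)\,d\mu(u).
$$

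Evaluating at $\xi=\xi(K)$, which is an interior critical point of the concave function $\xi\mapsto\Phi_\varepsilon(K,\xi)$ by Proposition~\ref{xiinside}, we obtain $\nabla_\xi\Phi_\varepsilon(K,\xi(K))=o$, which is exactly the assertion of Corollary~\ref{intcond}. There is no real obstacle here: the content is purely the verification that the integrand has a uniform-in-$u$ derivative bound near $\xi(K)$, which is ensured by the interior condition $\xi(K)\in\inte K$ together with the regularity of $\varphi_\varepsilon$ away from $0$.
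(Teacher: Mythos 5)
Your proof is correct and follows essentially the same approach as the paper: the paper derives the corollary as the first-order condition at the interior maximizer $\xi(K)$ from Proposition~\ref{xiinside}, stating it in a single line, while you additionally spell out the routine but necessary justification for differentiating under the integral sign (uniform bounds on $\varphi'_\varepsilon$ on a compact subinterval of $(0,\infty)$ thanks to $\xi(K)\in\inte K$, plus finiteness of $\mu$).
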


An essential property of $\xi(K)$ is its continuity with respect to $K$.

\begin{lemma}
\label{xiKcontinuous}
For $\varepsilon\in(0,\frac13)$, both $\xi(K)$ and $\Phi_\varepsilon(K,\xi(K))$ are continuous functions of the convex body $K$ in $\R^n$.
\end{lemma}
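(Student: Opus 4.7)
\proof Let $\{K_m\}$ be a sequence of convex bodies converging to $K$ in the Hausdorff metric, and write $\xi_m=\xi(K_m)$. Since $\xi_m\in K_m$ and $K_m\to K$, the sequence $\{\xi_m\}$ is bounded, so up to passing to a subsequence we may assume $\xi_m\to \xi_0\in K$. The goal is to show $\xi_0=\xi(K)$; since every convergent subsequence will have the same limit, this yields the continuity of $K\mapsto \xi(K)$, and then the continuity of $K\mapsto \Phi_\varepsilon(K,\xi(K))$ follows at once.

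The first key step is to rule out $\xi_0\in\partial K$. Pick any fixed $\xi^*\in\inte K$, so that $\xi^*+rB^n\subset K$ for some $r>0$. By uniform convergence of $h_{K_m}$ to $h_K$ on ${\mathbb S}^{n-1}$, for $m$ large we have $h_{K_m}(u)-\langle u,\xi^*\rangle\geq r/2$ for every $u\in{\mathbb S}^{n-1}$, and these quantities are also bounded above by $1+\operatorname{diam}K$. Since $\varphi_\varepsilon$ is continuous on $[r/2,1+\operatorname{diam}K]$, the integrals $\Phi_\varepsilon(K_m,\xi^*)$ converge to $\Phi_\varepsilon(K,\xi^*)$, which is a finite number. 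By the maximality property of $\xi_m$,
$$
\Phi_\varepsilon(K_m,\xi_m)\geq \Phi_\varepsilon(K_m,\xi^*),
$$
so $\liminf_{m\to\infty}\Phi_\varepsilon(K_m,\xi_m)>-\infty$. If $\xi_0\in\partial K$, this would contradict Lemma~\ref{insidegood}. Therefore $\xi_0\in\inte K$.

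Now I show $\xi_0=\xi(K)$. Since $\xi_0\in\inte K$, we find $r_0>0$ with $\xi_0+r_0B^n\subset K$, and then $h_{K_m}(u)-\langle u,\xi_m\rangle\geq r_0/2$ for all $u\in{\mathbb S}^{n-1}$ and all $m$ sufficiently large. Continuity of $\varphi_\varepsilon$ on a compact interval, together with the uniform convergence $h_{K_m}\to h_K$ and $\xi_m\to\xi_0$, yields
$$
\lim_{m\to\infty}\Phi_\varepsilon(K_m,\xi_m)=\Phi_\varepsilon(K,\xi_0).
$$
On the other hand, for any $\xi\in\inte K$, one has $\xi\in\inte K_m$ for large $m$ and, by the same argument applied with $\xi$ in place of $\xi^*$, $\Phi_\varepsilon(K_m,\xi)\to\Phi_\varepsilon(K,\xi)$. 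The inequality $\Phi_\varepsilon(K_m,\xi_m)\geq\Phi_\varepsilon(K_m,\xi)$ passes to the limit to give $\Phi_\varepsilon(K,\xi_0)\geq \Phi_\varepsilon(K,\xi)$ for every $\xi\in\inte K$. By the uniqueness asserted in Proposition~\ref{xiinside}, this forces $\xi_0=\xi(K)$, completing the proof.

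The main obstacle in this argument is the exclusion of boundary limit points for $\xi_m$; this is overcome by testing against a fixed interior point of $K$ and invoking Lemma~\ref{insidegood}, which is precisely the reason why $\varphi_\varepsilon$ was modified to blow down to $-\infty$ near zero. Once the limit of any subsequence is trapped in $\inte K$, the uniform lower bound on $h_{K_m}-\langle\cdot,\xi_m\rangle$ makes the energy functional jointly continuous, and uniqueness of the maximizer closes the argument. \ \ Q.E.D.
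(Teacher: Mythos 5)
Your proof is correct and follows essentially the same route as the paper's: extract a convergent subsequence $\xi_m\to\xi_0$, obtain a uniform lower bound on $\Phi_\varepsilon(K_m,\xi_m)$ by testing against a fixed interior point, invoke Lemma~\ref{insidegood} to exclude $\xi_0\in\partial K$, and then use uniform convergence together with the uniqueness in Proposition~\ref{xiinside} to conclude $\xi_0=\xi(K)$. The only cosmetic difference is that the paper tests directly against $\xi(K)$ (bounding $\Phi_\varepsilon(K_m,\xi(K))$ from below by $\Phi_\varepsilon(\xi(K)+rB^n,\xi(K))$) and compares only with $\xi(K)$ at the end, whereas you test against an arbitrary $\xi^*\in\inte K$ and verify maximality against all $\xi\in\inte K$; the content is the same.
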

\proof Let $\{K_m\}$ be a sequence convex bodies tending to a convex body $K$ in $\R^n$. We may assume that
$\lim_{m\to \infty}\xi(K_m)=z_0\in K$. 
There exists $r>0$ such that $\xi(K)+2r\,B^n\subset K$, and hence 
we may also assume that $\xi(K)+r\,B^n\subset K_m$ for all $m$. Thus
$$
\Phi_\varepsilon(K_m,\xi(K_m))\geq \Phi_\varepsilon(K_m,\xi(K))\geq \Phi_\varepsilon(\xi(K)+r\,B^n,\xi(K)),
$$
and in turn Lemma~\ref{insidegood} yields that
 $z_0\in{\rm int}\,K$.
It follows that $\varphi_\varepsilon(h_{K_m}(u)-\langle u,\xi(K_m)\rangle)$ tends uniformly to
$\varphi_\varepsilon(h_K(u)-\langle u,z_0\rangle)$. In particular,
$$
\Phi_\varepsilon(K,z_0)=\lim_{m\to\infty}\Phi_\varepsilon(K_m,\xi(K_m))\geq
\limsup_{m\to\infty}\Phi_\varepsilon(K_m,\xi(K))=\Phi_\varepsilon(K,\xi(K)).
$$
Since $\xi(K)$ is the unique maximum point of $\xi\mapsto \Phi_\varepsilon(K,\xi)$ on ${\rm int}\,K$
according to Proposition~\ref{xiinside}, we have $z_0=\xi(K)$. In turn, we conclude Lemma~\ref{xiKcontinuous}.
Q.E.D. \\

The next lemma shows that if we perturb a convex body $K$ in a differentiable way, then
$\xi(K)$ changes also in a differentiable way.

\begin{lemma} 
\label{xider}
For $\varepsilon\in(0,\frac13)$, let $c>0$ and $t_0>0$, and let $K_t$ be a family of convex bodies  with support function $h_t$ for $t\in[0,t_0)$. 
Assume that
\begin{enumerate}
 \item $|h_t(u)-h_0(u)|\leq c t$ for each $u\in {\mathbb S}^{n-1}$ and $t\in[0,t_0)$,
 \item $\lim_{t\to 0^+}\frac{h_t(u)-h_0(u)}{t}$ exists for $\mathcal{H}^{n-1}$-almost all $u\in {\mathbb S}^{n-1}$.
\end{enumerate}
Then $\lim_{t\to 0^+}\frac{\xi(K_t)-\xi(K_0)}{t}$ exists.
 \end{lemma}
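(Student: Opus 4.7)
The plan is to treat $\xi(K_t)$ as implicitly defined by the Euler--Lagrange relation of Corollary~\ref{intcond}, and then extract its differentiability at $t=0$ by a mean-value-theorem device applied to $\varphi'_\varepsilon$. Writing the optimality condition at times $t$ and $0$ and subtracting gives
$$
\int_{\sfe} u\,\bigl[\varphi'_\varepsilon(h_t(u)-\langle u,\xi(K_t)\rangle)-\varphi'_\varepsilon(h_0(u)-\langle u,\xi(K_0)\rangle)\bigr]\,d\mu(u)=o.
$$
Applying the one-variable mean value theorem to $\varphi'_\varepsilon$, with an intermediate point $s_t(u)$, and dividing by $t$, this rearranges to
$$
M_t\,\eta_t=\int_{\sfe} u\,\varphi''_\varepsilon(s_t(u))\,\frac{h_t(u)-h_0(u)}{t}\,d\mu(u),
$$
where $\eta_t:=(\xi(K_t)-\xi(K_0))/t$ and $M_t:=\int_{\sfe}(u\otimes u)\,\varphi''_\varepsilon(s_t(u))\,d\mu(u)$. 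The task then reduces to showing that $M_t$ is uniformly invertible and that both sides converge as $t\to 0^+$.

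The key preliminary step is to establish uniform two-sided bounds on the arguments of $\varphi''_\varepsilon$. By Lemma~\ref{xiKcontinuous}, $\xi(K_t)\to\xi(K_0)\in\inte K_0$. Combined with the Hausdorff convergence $K_t\to K_0$ coming from assumption (1), this gives some $r>0$ so that $\xi(K_t)+rB^n\subset K_t$ for all small $t$; hence $h_t(u)-\langle u,\xi(K_t)\rangle\geq r$ uniformly in $u$ and $t$. An upper bound follows from the uniform boundedness of $K_t$ and $\xi(K_t)$. Since $s_t(u)$ lies between $h_0(u)-\langle u,\xi(K_0)\rangle$ and $h_t(u)-\langle u,\xi(K_t)\rangle$, it too belongs to a fixed compact interval $[r_0,R_0]\subset(0,\infty)$, on which the continuous function $\varphi''_\varepsilon$ satisfies $-A\le \varphi''_\varepsilon\le -B<0$.

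With these bounds and the lower bound $f>\tau_1$ from (\ref{fcondition}), for any unit vector $v$,
$$
v^\top M_t v=\int_{\sfe}(u\cdot v)^2\,\varphi''_\varepsilon(s_t(u))\,d\mu(u)\leq -B\tau_1\int_{\sfe}(u\cdot v)^2\,d\mathcal{H}^{n-1}(u)=-\frac{B\tau_1\,n\kappa_n}{n},
$$
so $M_t$ is negative definite with $\|M_t^{-1}\|$ bounded uniformly in $t$. Since $|h_t-h_0|/t\leq c$, the right-hand side of the displayed equation is also uniformly bounded, hence $\eta_t$ is bounded.

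Passage to the limit then follows from dominated convergence. The uniform convergence $h_t\to h_0$ and the continuity $\xi(K_t)\to\xi(K_0)$ give $s_t(u)\to h_0(u)-\langle u,\xi(K_0)\rangle$ pointwise, whence $M_t\to M_0:=\int_{\sfe}(u\otimes u)\,\varphi''_\varepsilon(h_0(u)-\langle u,\xi(K_0)\rangle)\,d\mu(u)$. Assumption (2) provides pointwise a.e.\ convergence of $(h_t-h_0)/t$ to some $h'(u)$, dominated by $c$, so the right-hand side converges to $b:=\int_{\sfe} u\,\varphi''_\varepsilon(h_0(u)-\langle u,\xi(K_0)\rangle)\,h'(u)\,d\mu(u)$. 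Therefore $\eta_t\to M_0^{-1}b$, which is the desired limit. The main technical obstacle is the pair of uniform estimates in the middle paragraph—securing $r_0>0$ (uniform interiority of $\xi(K_t)$) and the quantitative negative-definiteness of $M_t$—after which the convergence argument is routine.
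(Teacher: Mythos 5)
Your proof is correct and rests on the same core idea as the paper's — linearizing the Euler--Lagrange identity from Corollary~\ref{intcond} and solving for $\xi(K_t)$ — but the bookkeeping is organized differently. The paper normalizes $\xi(K_0)=o$, expands $\varphi'_\varepsilon$ about the fixed point $h_0(u)$ via a second-order Taylor formula with remainder $e(t,u)$, splits the remainder as $e_1+e_2$, and then has to run a two-stage argument: first deduce $\|\xi(K_t)\|\leq\beta t$ from the resulting identity so the remainder terms $\psi_1,\psi_2$ are seen to be $O(t^2)$, and only then pass to the limit to identify the derivative $A^{-1}\int u\,\varphi''_\varepsilon(h_0)\,\partial_1 g(0,u)\,d\mu$. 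You instead apply the mean value theorem exactly between the two evaluation points, which yields the clean linear system $M_t\,\eta_t = b_t$ with the $t$-dependent matrix $M_t$; uniform negative-definiteness of $M_t$ immediately gives boundedness of $\eta_t$, and the dominated-convergence passage $M_t\to M_0$, $b_t\to b$ produces the limit $M_0^{-1}b$ in one step. Your variant buys freedom from the error-term bookkeeping, at the small cost of having to control a $t$-dependent matrix; both proofs ultimately rely on the same observation (which the paper leaves implicit in the choice of the constant $\gamma$) that the arguments of $\varphi''_\varepsilon$ stay in a fixed compact subinterval of $(0,\infty)$, which you make explicit via the uniform interiority of $\xi(K_t)$.

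One technical point to tidy: the pointwise mean value theorem gives you, for each $u$, \emph{some} intermediate point $s_t(u)$, but you need $u\mapsto\varphi''_\varepsilon(s_t(u))$ to be measurable in order to form $M_t$ and $b_t$ as integrals. The cleanest repair is to replace the pointwise MVT by its integral form,
$$
\varphi'_\varepsilon(b)-\varphi'_\varepsilon(a)=(b-a)\int_0^1\varphi''_\varepsilon\bigl(a+\theta(b-a)\bigr)\,d\theta,
$$
so that $\varphi''_\varepsilon(s_t(u))$ is replaced by the manifestly measurable average $\int_0^1\varphi''_\varepsilon\bigl(a(u)+\theta(b(u)-a(u))\bigr)\,d\theta$. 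This averaged quantity inherits exactly the same uniform two-sided bounds $[-A,-B]$ and the same pointwise limit as $t\to 0^+$, so every subsequent step of your argument — uniform invertibility of $M_t$, boundedness of $\eta_t$, and the dominated-convergence limit — goes through verbatim.
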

\proof We may assume that $\xi(K_0)=o$. Since $\xi(K)\in{\rm int}\,K$ is the unique maximizer of 
$\xi\mapsto \Phi_\varepsilon(K,\xi)$, we deduce that
$$
\lim_{t\to 0^+}\xi(K_t)=o.
$$

Let $g(t,u)=h_t(u)-h_0(u)$ for $u\in {\mathbb S}^{n-1}$ and $t\in[0,t_0)$. In particular, there exists
constant $\gamma>0$ such that if $u\in {\mathbb S}^{n-1}$ and $t\in[0,t_0)$, then
$$
\varphi'_\varepsilon(h_t(u)-\langle u,\xi(K_t)\rangle)=
\varphi'_\varepsilon(h_0(u))+\varphi''_\varepsilon(h_0(u))\ \big(g(t,u)-\langle u,\xi(K_t)\rangle\big)
+e(t,u)
$$
where, setting $\gamma_1=2\gamma c^2$ and $\gamma_2=2\gamma$, we have
$$
|e(t,u)|\leq \gamma(g(t,u)-\langle u,\xi(K_t)\rangle)^2\leq \gamma(ct+\|\xi(K_t)\|)^2
\leq \gamma_1t^2+\gamma_2\|\xi(K_t)\|^2.
$$
In particular, $e(t,u)=e_1(t,u)+e_2(t,u)$ where 
\begin{equation}
\label{e1e2}
|e_1(t,u)|\leq \gamma_1t^2\mbox{ \ and \ }|e_2(t,u)|\leq \gamma_2\|\xi(K_t)\|^2.
\end{equation}
It follows from applying Corollary~\ref{intcond} to $K_t$ and $K_0$ that
$$
\int_{{\mathbb S}^{n-1}}u\  \Big(\varphi''_\varepsilon(h_0(u))\ \big(g(t,u)-\langle u,\xi(K_t)\rangle\big)
+e(t,u)\Big)\,d\mu(u)=o,
$$
which can be written as
$$
\int_{{\mathbb S}^{n-1}}u\  \big(\varphi''_\varepsilon(h_0(u))\  g(t,u)
+e_1(t,u)\big)\,d\mu(u)=
\int_{{\mathbb S}^{n-1}}u\  \langle u,\xi(K_t)\rangle\varphi''_\varepsilon(h_0(u))\,d\mu(u)
-\int_{{\mathbb S}^{n-1}}u\  e_2(t,u)\,d\mu(u).
$$
Since $\varphi''_\varepsilon(s)<0$ for all $s>0$, the symmetric matrix
$$
A=\int_{{\mathbb S}^{n-1}}u\otimes u\ \varphi''_\varepsilon(h_0(u))\,d\mu(u)
$$
is negative definite because for any $v\in {\mathbb S}^{n-1}$, we have
$$
v^TAv=\int_{{\mathbb S}^{n-1}}\langle u,v\rangle^2\ \varphi''_\varepsilon(h_0(u))\ f(u)\,d\mathcal{H}^{n-1}(u)<0.
$$
In addition, $A$ satisfies that
$$
\int_{{\mathbb S}^{n-1}}u\  \langle u,\xi(K_t)\rangle\ \varphi''_\varepsilon(h_0(u))\,d\mu(u)=A \,\xi(K_t).
$$
It follows from (\ref{e1e2}) that if $t$ is small, then
\begin{equation}
\label{psi1psi2}
A^{-1}\int_{{\mathbb S}^{n-1}}u\  \varphi''_\varepsilon(h_0(u))\  g(t,u)\,d\mu(u)
+\psi_1(t)=\xi(K_t)-\psi_2(t),
\end{equation}
where $\|\psi_1(t)\|\leq \alpha_1 t^2$ and $\|\psi_2(t)\|\leq \alpha_2 \|\xi(K_t)\|^2$
for constants $\alpha_1,\alpha_2>0$. Since $\xi(K_t)$ tends to $o$, 
if $t$ is small, then $\|\xi(K_t)-\psi_2(t)\|\geq \frac12\,\|\xi(K_t)\|$, thus
$\|\xi(K_t)\|\leq \beta\,t$ for a constant $\beta>0$ by $g(t,u)\leq ct$.
In particular, $\|\psi_2(t)\|\leq \alpha_2 \beta^2t^2$. 
 Since
there exists $\lim_{t\to 0^+}\frac{g(t,u)-g(0,u)}{t}=\partial_1 g(0,u)$
for $\mu$ almost all 
$u\in {\mathbb S}^{n-1}$, and $\frac{g(t,u)-g(0,u)}{t}<c$ for all $u\in {\mathbb S}^{n-1}$ and $t>0$, we conclude that
$$
\left.\frac{d}{dt}\,\xi(K_t)\right|_{t=0}=
A^{-1}\int_{{\mathbb S}^{n-1}}u\  \varphi''_\varepsilon(h_0(u))\  \partial_1g(0,u)\,d\mu(u).
$$
Q.E.D.

\begin{corollary}
\label{center-irrelevant}
Under the conditions of Lemma~\ref{xider}, and denoting $K_0$ by $K$, we have
$$
\left.\frac{d}{dt}\,\Phi_\varepsilon(K_t,\xi(K_t))\right|_{t=0}=\int_{{\mathbb S}^{n-1}}\left.\frac{\partial}{\partial t} h_{K_t}(u)\right|_{t=0} \varphi'_\varepsilon\big(h_K(u)-\langle u,\xi(K)\rangle\big)\,d\mu(u).
$$
\end{corollary}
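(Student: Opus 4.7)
The plan is to apply the chain rule to the two quantities varying with $t$, namely the body $K_t$ and the center $\xi(K_t)$, and then observe that the contribution of the $\xi$-variation vanishes by the first-order optimality condition recorded in Corollary~\ref{intcond}. Intuitively this is an envelope/Danskin principle: because $\xi(K)$ is an interior maximizer of $\xi\mapsto \Phi_\varepsilon(K,\xi)$, infinitesimal movement of the optimal center contributes zero to the derivative of the value function.

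First I would set up bounds. Lemma~\ref{xider} ensures $\xi(K_t)\in \inte K_t$ for all sufficiently small $t\geq 0$, depends differentiably on $t$ at $t=0$, and in particular satisfies $\|\xi(K_t)-\xi(K_0)\|\leq \beta t$ for some $\beta>0$. Combined with hypothesis (1) of that lemma, the function
\[
g(t,u):=h_{K_t}(u)-\langle u,\xi(K_t)\rangle
\]
satisfies $|g(t,u)-g(0,u)|\leq (c+\beta)t$ uniformly in $u\in\sfe$. Since $g(0,u)=h_K(u)-\langle u,\xi(K)\rangle \geq r_0>0$ for some $r_0$ (an inradius of $K$ with respect to $\xi(K)$), we have $g(t,u)\geq r_0/2$ uniformly in $u$ for small $t$, so we stay in a region where $\varphi_\varepsilon$ is smooth with bounded derivatives.

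Next I write
\[
\frac{\Phi_\varepsilon(K_t,\xi(K_t))-\Phi_\varepsilon(K_0,\xi(K_0))}{t}=\int_{\sfe}\frac{\varphi_\varepsilon(g(t,u))-\varphi_\varepsilon(g(0,u))}{t}\,d\mu(u).
\]
By hypothesis (2) of Lemma~\ref{xider} together with the differentiability of $\xi(K_t)$, the map $t\mapsto g(t,u)$ is differentiable at $t=0$ for $\mu$-a.e.\ $u$. Since $\varphi_\varepsilon\in C^2$ and $g$ ranges in a compact subinterval of $(0,\infty)$ for small $t$, the mean value theorem gives both pointwise convergence of the integrand to $\varphi'_\varepsilon(g(0,u))\,\partial_t g(t,u)|_{t=0}$ and a uniform integrable dominator. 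Dominated convergence, together with $\partial_t g(t,u)|_{t=0}=\partial_t h_{K_t}(u)|_{t=0}-\langle u,\frac{d}{dt}\xi(K_t)|_{t=0}\rangle$, yields
\[
\left.\frac{d}{dt}\Phi_\varepsilon(K_t,\xi(K_t))\right|_{t=0}=\int_{\sfe}\varphi'_\varepsilon(g(0,u))\left[\left.\partial_t h_{K_t}(u)\right|_{t=0}-\left\langle u,\left.\tfrac{d}{dt}\xi(K_t)\right|_{t=0}\right\rangle\right]d\mu(u).
\]

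Finally, Corollary~\ref{intcond} applied to $K=K_0$ gives
\[
\int_{\sfe}u\,\varphi'_\varepsilon(h_K(u)-\langle u,\xi(K)\rangle)\,d\mu(u)=o,
\]
so pulling the constant vector $\frac{d}{dt}\xi(K_t)|_{t=0}$ out of the integral kills the second term entirely. What remains is precisely the right-hand side of the claimed identity. The only real technical step is the dominated convergence argument, which is routine given the uniform lower bound $g(t,u)\geq r_0/2$ that prevents us from approaching the singular values of $\varphi_\varepsilon$; everything else is bookkeeping with the chain rule and the optimality condition.
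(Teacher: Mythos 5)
Your proof is correct and follows essentially the same route as the paper: differentiate under the integral via the chain rule, then observe that the $\xi'(0)$-term vanishes by pulling the constant vector out of the integral and applying Corollary~\ref{intcond}. The paper's proof is more terse (it treats the dominated-convergence justification as implicit), but the structure and key ideas are identical.
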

\proof We write $h(t,u)=h_{K_t}(u)$ and $\xi(t)=\xi(K_t)$; Corollary~\ref{intcond} and Lemma~\ref{xider} yield
\begin{eqnarray*}
\left.\frac{d}{dt}\,\Phi_\varepsilon(K_t,\xi(K_t))\right|_{t=0}&=&
\left.\frac{d}{dt}\, \int_{{\mathbb S}^{n-1}}\varphi_\varepsilon\big(h(t,u)-\langle u,\xi(t)\rangle\big)\,d\mu(u)\right|_{t=0}\\
&=&\int_{{\mathbb S}^{n-1}}\partial_1h(0,u)\ \varphi'_\varepsilon\big(h_K(u)-\langle u,\xi(K)\rangle\big)\,d\mu(u)-\\
&&\int_{{\mathbb S}^{n-1}}\langle u,\xi'(0)\rangle\ \varphi'_\varepsilon\big(h_K(u)-\langle u,\xi(K)\rangle\big)\,d\mu(u)\\
&=&\int_{{\mathbb S}^{n-1}}\partial_1h(0,u)\ \varphi'_\varepsilon\big(h_K(u)-\langle u,\xi(K)\rangle\big)\,d\mu(u).
\end{eqnarray*}
Q.E.D.

\section{The existence of the minimum convex body $K^\varepsilon$}
\label{secextremal-existence}

Let $p\in(-n,1)$, and let $\mathcal{K}_1\subset \mathcal{K}_0^n$ be the set of convex bodies with volume one and containing the origin.

We observe that $\kappa_n^{-1/n}>\frac12$, $\kappa_n^{-1/n}B^n\in \mathcal{K}_1$ and the diameter of $\kappa_n^{-1/n}B^n$ is
$2\kappa_n^{-1/n}$. It follows from $\varphi_\varepsilon\leq \varphi$ and the monotonicity of $\varphi$, that if $\varepsilon\in(0,\frac16)$, then
\begin{eqnarray}
\label{Phiball}
\Phi_\varepsilon(\kappa_n^{-1/n}B^n,\xi(\kappa_n^{-1/n}B^n))&\leq& \int_{{\mathbb S}^{n-1}}\varphi(2\kappa_n^{-1/n})d\mu
=\varphi(2\kappa_n^{-1/n})\mu({\mathbb S}^{n-1})\\
\nonumber
&\leq&
\left\{\begin{array}{ll}
 2^{p}\kappa_n^{\frac{-p}n}\,n\kappa_n\cdot \tau_2&\mbox{ if  
$p\in(0,1)$,}\\[1ex]
 \log\big(2\kappa_n^{\frac{-1}n}\big)\,n\kappa_n\cdot \tau_2 &\mbox{ if $p=0$,}\\[1ex]
 -2^{p}\kappa_n^{\frac{-p}n}\,n\kappa_n \cdot \tau_1&\mbox{ if  
$p\in(-n,0)$.}
\end{array}
\right.
\end{eqnarray}

\bigskip

For $K\in \mathcal{K}_1$,  let
$R(K)=\max\{\|x-\sigma(K)\|:\,x\in K\}$. We define the measure of the empty set to be zero.
We note that if $\alpha\in (0,\frac{\pi}2)$ and $v\in {\mathbb S}^{n-1}$, then
\begin{equation}
\label{caparea}
\mathcal{H}^{n-1}\left(\{u\in {\mathbb S}^{n-1}:\,\langle u,v\rangle\geq \cos\alpha\} \right)
\geq (\sin\alpha)^{n-1}\kappa_{n-1}.
\end{equation}

\begin{lemma}
\label{Knotbounded01}
Let $p\in[0,1)$. There exists $R_0>1$, depending on $n$, $p$, $\tau_1$ and $\tau_2$, such that if
$K\in \mathcal{K}_1$, $R(K)>R_0$ and $\varepsilon\in(0,\frac16)$, then
$$
\Phi_\varepsilon(K,\xi(K))>\Phi_\varepsilon(\kappa_n^{-1/n}B^n,\xi(\kappa_n^{-1/n}B^n)).
$$
\end{lemma}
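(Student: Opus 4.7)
The plan is to derive a lower bound for $\Phi_\varepsilon(K,\xi(K))$ that grows without bound as $R(K)\to\infty$, independently of $\varepsilon$, and compare it with the fixed upper bound (\ref{Phiball}) for $\Phi_\varepsilon(\kappa_n^{-1/n}B^n,\xi(\kappa_n^{-1/n}B^n))$. By the maximality of $\xi(K)$ from Proposition~\ref{xiinside}, we may replace $\xi(K)$ by the centroid: $\Phi_\varepsilon(K,\xi(K))\geq \Phi_\varepsilon(K,\sigma(K))$. The idea is then to split ${\mathbb S}^{n-1}$ into a geodesic cap in the direction where $K$ is long and its complement, estimating each piece separately.

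For the cap, fix $\alpha=\pi/3$ and choose $v\in\sfe$ with $\sigma(K)+R(K)v\in K$. For $u\in\Omega(v,\alpha)$ we have $h_K(u)-\langle\sigma(K),u\rangle\geq R(K)\cos\alpha = R(K)/2$. Since $\varphi_\varepsilon$ is increasing and coincides with $\varphi$ on $[3\varepsilon,\infty)$---an interval containing $[1/2,\infty)$ because $\varepsilon<1/6$---provided $R(K)\geq 1$ the contribution from the cap satisfies
$$
\int_{\Omega(v,\alpha)}\varphi_\varepsilon\bigl(h_K(u)-\langle\sigma(K),u\rangle\bigr)\,d\mu(u)\geq \tau_1(\sin\alpha)^{n-1}\kappa_{n-1}\,\varphi(R(K)/2),
$$
using (\ref{caparea}) and (\ref{fcondition}). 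This tends to $+\infty$ as $R(K)\to\infty$ since $\varphi(t)=t^p$ or $\log t$ with $p\in[0,1)$.

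For the complement, since $\varphi\geq 0$ on $[1,\infty)$ when $p\in[0,1)$ and $q=\max\{|p|,n-1\}=n-1$, combining with (\ref{t01p01}) yields $\varphi_\varepsilon(t)\geq -t^{-(n-1)}$ for every $t>0$. The contribution from the complement is therefore bounded below by $-\tau_2\int_{\sfe}(h_K(u)-\langle\sigma(K),u\rangle)^{-(n-1)}\,d\mathcal{H}^{n-1}(u)$. Since $V(K)=1$, the Blaschke--Santal\'o inequality (Lemma~\ref{centroid}(ii)) gives $\int_{\sfe}(h_K(u)-\langle\sigma(K),u\rangle)^{-n}\,d\mathcal{H}^{n-1}(u)\leq n\kappa_n^2$, and H\"older's inequality with exponents $n/(n-1)$ and $n$ converts this into $\int_{\sfe}(h_K(u)-\langle\sigma(K),u\rangle)^{-(n-1)}\,d\mathcal{H}^{n-1}(u)\leq C_n$ for some $C_n$ depending only on $n$.

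Combining the two estimates gives $\Phi_\varepsilon(K,\xi(K))\geq \tau_1(\sin\alpha)^{n-1}\kappa_{n-1}\,\varphi(R(K)/2)-\tau_2 C_n$, while (\ref{Phiball}) provides a finite upper bound on $\Phi_\varepsilon(\kappa_n^{-1/n}B^n,\xi(\kappa_n^{-1/n}B^n))$ depending only on $n,p,\tau_2$. Choosing $R_0>1$ large enough in terms of $n,p,\tau_1,\tau_2$ so that $\varphi(R_0/2)$ overwhelms both $\tau_2 C_n$ and the bound from (\ref{Phiball}) completes the argument. The key step---and where more naive approaches fail---is the uniform bound on the negative term: a direct estimate from the inradius alone would scale polynomially in $R(K)$ (since the inradius with respect to $\sigma(K)$ may be as small as $1/((n+1)\kappa_{n-1}R(K)^{n-1})$ by Lemma~\ref{centroid}(iii)), so the volume-based control from Blaschke--Santal\'o is essential.
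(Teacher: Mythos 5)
Your proof is correct and follows essentially the same strategy as the paper: lower-bound the contribution of a geodesic cap in the long direction $v$ using (\ref{caparea}), control the negative part via the Blaschke--Santal\'o inequality and H\"older, and compare against the fixed upper bound (\ref{Phiball}). The only cosmetic difference is that you apply H\"older over all of ${\mathbb S}^{n-1}$ (getting a dimensional constant directly), whereas the paper restricts H\"older to the set $\Xi_0=\{h_K<1\}$ after bounding its $\mathcal{H}^{n-1}$-measure; both give a uniform constant bound on the negative term.
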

\proof Let $K\in\mathcal{K}_1$. We may assume $\sigma(K)=o$ and $R=R(K)>2n$. 
Let $v\in {\mathbb S}^{n-1}$ satisfy $Rv\in K$. It follows from Lemma~\ref{centroid} (i) that $(-R/n)v\in K$, as well. 

We write $c_0,c_1$ to denote positive constants depending on $n,p,\tau_1,\tau_2$.
We consider
$$
\Xi_0=\{u\in {\mathbb S}^{n-1}:\, h_K(u)< 1\},
$$
and $\Xi_1={\mathbb S}^{n-1}\backslash \Xi_0$. We observe that if $u\in \Omega(v,\frac{\pi}3)$, then 
$h_K(u)\geq \langle u,Rv\rangle\geq R/2$, and in turn $ \Omega(v,\frac{\pi}3)\subset \Xi_1$.  Since 
$\mu(\Omega(v,\frac{\pi}3))\geq \tau_1(\frac{\sqrt{3}}2)^{n-1}\kappa_{n-1}$ by (\ref{caparea}) and $\varphi_\varepsilon(t)=\varphi(t)>0$ for $t>1$, we have
\begin{equation}
\label{intOmega}
\int_\Xi \varphi_\varepsilon\circ h_K\,d\mu\geq 
\int_{\Omega(v,\frac{\pi}3)} \varphi_\varepsilon\circ h_K\,d\mu\geq 
\tau_1\left(\frac{\sqrt{3}}2\right)^{n-1}\kappa_{n-1}\varphi(R/2)=c_1\varphi(R/2).
\end{equation}
However, if $u\in\Xi_0$, then $|\langle u,v\rangle|<n/R$ as $1>h_K(u)\geq |\langle (R/n)v,u\rangle|$. It follows that
\begin{equation}
\label{Omega0}
\mathcal{H}^{n-1}(\Xi_0)\leq (n-1)\kappa_{n-1}\cdot \frac{2n}R<(n-1)\kappa_{n-1}.
\end{equation}
We deduce from (\ref{t01p01}), the H\"older inequality,
the Blaschke-Santal\'o inequality Lemma~\ref{centroid} (ii) and (\ref{Omega0}) that 
\begin{eqnarray}
\nonumber
\int_{\Xi_0} \varphi_\varepsilon\circ h_K\,d\mu&\geq &
 -\tau_2\int_{\Xi_0} h_K^{-(n-1)}\,d\mathcal{H}^{n-1}\\
\nonumber
&\geq&-\tau_2\left(\int_{\Xi_0} h_K^{-n}\,d\mathcal{H}^{n-1}\right)^{\frac{n-1}n}
\mathcal{H}^{n-1}(\Xi_0)^{\frac1n}\\
\label{intOmega0}
&\geq& -\tau_2(n\kappa_n^2)^{\frac{n-1}n}((n-1)\kappa_{n-1})^{\frac1n}=-c_0.
\end{eqnarray}
Writing $c(n,p,\tau_1,\tau_2)$ to denote the constant on the right hand side of (\ref{Phiball}),
comparing (\ref{Phiball}), (\ref{intOmega}) and (\ref{intOmega0}) yields 
$$
c_1\varphi(R/2)
-c_0\leq c(n,p,\tau_1,\tau_2),
$$
and, in turn, the existence of $R_0$ as $\lim_{R\to\infty}\varphi(R/2)=\infty$ by (\ref{philimit}). \ \ Q.E.D.\\

The argument in the case $p\in(-n,0)$ is similar to the previous one, but it needs to be refined as now
$\lim_{t\to\infty}\varphi(t)=0$.

\begin{lemma}
\label{Knotboundedn1}
Let $p\in(-n,0)$. There exists $R_0>1$, depending on $n$, $p$, $\tau_1$ and $\tau_2$, such that if $K\in \mathcal{K}_1$, 
$R(K)>R_0$, and $\varepsilon\in(0,\frac16)$, then
$$
\Phi_\varepsilon(K,\xi(K))>\Phi_\varepsilon(\kappa_n^{-1/n}B^n,\xi(\kappa_n^{-1/n}B^n)).
$$
\end{lemma}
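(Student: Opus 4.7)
\medskip\noindent
\emph{Proof plan.} The key difference from Lemma~\ref{Knotbounded01} is that for $p\in(-n,0)$, by (\ref{philimit}), $\varphi(t)=-t^p$ tends to $0$, not to $\infty$, as $t\to\infty$. Hence I cannot hope to drive $\Phi_\varepsilon(K,\xi(K))$ to $+\infty$; instead my plan is to show $\Phi_\varepsilon(K,\xi(K))\to 0$ as $R(K)\to\infty$. This would suffice because, by (\ref{Phiball}), $\Phi_\varepsilon(\kappa_n^{-1/n}B^n,\xi(\kappa_n^{-1/n}B^n))\leq -2^p\kappa_n^{-p/n}n\kappa_n\tau_1$, a strictly negative constant depending only on $n,p,\tau_1$.

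Since both $R(K)$ and $\Phi_\varepsilon(K,\xi(K))$ are translation invariant, I would first assume $\sigma(K)=o$, so that $o\in{\rm int}\,K$ and $\xi=o$ is a legitimate test center, giving $\Phi_\varepsilon(K,\xi(K))\geq \int_\sfe\varphi_\varepsilon(h_K)\,d\mu$. Choose $v\in\sfe$ with $Rv\in K$, where $R=R(K)>2n$; by Lemma~\ref{centroid}(i), $-(R/n)v\in K$ too, whence $h_K(u)\geq(R/n)|\langle u,v\rangle|$ for every $u$. Partition $\sfe=\Xi_0\cup\Xi_1$ with $\Xi_0=\{h_K<1\}$. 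As in the proof of Lemma~\ref{Knotbounded01}, $\Xi_0\subset\{|\langle u,v\rangle|<n/R\}$, so $\mathcal{H}^{n-1}(\Xi_0)=O(1/R)$. Since $q=\max\{|p|,n-1\}<n$, combining (\ref{t01p01}) and (\ref{fcondition}) with H\"older's inequality and the Blaschke-Santal\'o bound of Lemma~\ref{centroid}(ii) at once gives $\int_{\Xi_0}\varphi_\varepsilon(h_K)\,d\mu \geq -\tau_2(n\kappa_n^2)^{q/n}\mathcal{H}^{n-1}(\Xi_0)^{(n-q)/n}$, which is of order $R^{-(n-q)/n}$ and vanishes as $R\to\infty$.

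The hard part will be controlling the integral on $\Xi_1$: here $h_K\geq 1>3\varepsilon$ gives $\varphi_\varepsilon(h_K)=-h_K^p$, but $h_K$ may remain close to $1$ on a substantial equatorial portion of $\Xi_1$. My plan is to prove $\int_{\Xi_1} h_K^p\,d\mathcal{H}^{n-1}\to 0$ via a dichotomy at the scale $t=R^{-1/2}$. On $A_t=\Xi_1\cap\{|\langle u,v\rangle|>t\}$, the estimate $h_K\geq(R/n)|\langle u,v\rangle|$ yields $h_K^p\leq(R/n)^p|\langle u,v\rangle|^p$, and passing to axial coordinates reduces matters to controlling $(R/n)^p\int_t^1 a^p(1-a^2)^{(n-3)/2}\,da$, which vanishes as $R\to\infty$ in each of the subcases $p\in(-1,0)$, $p=-1$ and $p\in(-n,-1)$ (the last being the most delicate: there the product $(R/n)^p\cdot t^{p+1}$ is of order $R^{(p-1)/2}\to 0$). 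On $B_t=\Xi_1\cap\{|\langle u,v\rangle|\leq t\}$ one simply uses $h_K^p\leq 1$ and $\mathcal{H}^{n-1}(B_t)=O(R^{-1/2})$. Combining these estimates with (\ref{fcondition}) yields $\Phi_\varepsilon(K,o)\to 0$, and choosing $R_0$ large enough so that $\Phi_\varepsilon(K,o)$ exceeds the strictly negative right-hand side of (\ref{Phiball}) completes the proof.
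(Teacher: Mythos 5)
Your plan is correct, and the $\Xi_0$ part is identical to the paper's. On the complement you follow a genuinely different route. The paper splits $\sfe$ into \emph{three} pieces using thresholds on $h_K$: $\Xi_0=\{h_K<1\}$, $\Xi_1=\{1\leq h_K<\sqrt R\}$, $\Xi_2=\{h_K\geq\sqrt R\}$. It observes that $\Xi_0\cup\Xi_1$ lies in the slab $\{|\langle u,v\rangle|\leq n/\sqrt R\}$, applies H\"older together with Blaschke--Santal\'o on $\Xi_1$ just as on $\Xi_0$, and on $\Xi_2$ simply uses $\varphi_\varepsilon(h_K)\geq\varphi_\varepsilon(\sqrt R)=-R^{p/2}$. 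You instead split $\Xi_1=\{h_K\geq 1\}$ by a threshold on $|\langle u,v\rangle|$ at $t=R^{-1/2}$, handle $B_t$ by the trivial bound $h_K^p\leq 1$ plus the slab-measure estimate, and handle $A_t$ by an explicit axial integration using $h_K\geq (R/n)|\langle u,v\rangle|$. Both yield the right conclusion. What the paper's approach buys is that H\"older handles all $p\in(-n,0)$ uniformly, with no case-splitting; what yours buys is that the $B_t$-piece needs no second application of H\"older/Blaschke--Santal\'o (and in fact the paper's use of H\"older on $\Xi_1$ is stronger than necessary---the trivial bound works there too). Your $A_t$-piece requires the three-case analysis $p\in(-1,0)$, $p=-1$, $p\in(-n,-1)$, which the paper avoids, so neither route is uniformly shorter. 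Note also that your decomposition and the paper's are close up to dimensional factors: $A_t\subset\{h_K>R^{1/2}/n\}$ and, on $\Xi_1\cup\Xi_2$, $\{h_K<\sqrt R\}\subset\{|\langle u,v\rangle|<n/\sqrt R\}$, so $A_t$ and $\Xi_2$ (resp.\ $B_t$ and $\Xi_1$) essentially coincide; it is the estimates used on each piece that differ.
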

\proof  Let $K\in\mathcal{K}_1$. We may assume $\sigma(K)=o$ and $R=R(K)>4n^2$. Let $v\in {\mathbb S}^{n-1}$
satisfy $Rv\in K$. It follows from Lemma~\ref{centroid} (i) that $(-R/n)v\in K$, as well. 

In this case, we divide ${\mathbb S}^{n-1}$ into three parts:
\begin{eqnarray*}
\Xi_0&=&\{u\in {\mathbb S}^{n-1}:\, h_K(u)< 1\},\\
\Xi_1&=&\{u\in {\mathbb S}^{n-1}:\, 1\leq h_K(u)<\sqrt{R}\},\\
\Xi_2&=&\{u\in {\mathbb S}^{n-1}:\, h_K(u)\geq \sqrt{R}\}.
\end{eqnarray*}

If $u\in\Xi_0\cup \Xi_1$, then
$$
\sqrt{R}>h_K(u)\geq \max\{\langle u,Rv\rangle, \langle u,(-R/n)v\rangle\}\geq
(R/n)|\langle u,v\rangle|.
$$
Thus $|\langle u,v\rangle|\leq n/\sqrt{R}$, which in turn yields that
\begin{equation}
\label{Xi01pn1}
\mathcal{H}^{n-1}(\Xi_0\cup \Xi_1)\leq \frac{4n(n-1)\kappa_{n-1}}{\sqrt{R}}.
\end{equation}
We write $c_0,c_1,c_2$ to denote positive constants depending on $n,p,\tau_1,\tau_2$.
If $u\in\Xi_0$, then $\varphi_\varepsilon(h_K(u))\geq -h_K(u)^{-q}$ according to (\ref{t01p01}),
and hence we deduce from  the H\"older inequality,
the Blaschke-Santal\'o inequality Lemma~\ref{centroid} (ii) and (\ref{Xi01pn1}) that
\begin{eqnarray}
\nonumber
\int_{\Xi_0} \varphi_\varepsilon\circ h_K\,d\mu&\geq &
 -\tau_2\int_{\Xi_0} h_K^{-q}\,d\mathcal{H}^{n-1}\\
\nonumber
&\geq&-\tau_2\left(\int_{\Xi_0} h_K^{-n}\,d\mathcal{H}^{n-1}\right)^{\frac{q}n}
\mathcal{H}^{n-1}(\Xi_0)^{\frac{n-q}n}\\
\label{intXi0}
&\geq& -\tau_2(n\kappa_n^2)^{\frac{q}n}\left(\frac{4n(n-1)\kappa_{n-1}}{\sqrt{R}}\right)^{\frac{n-q}n}
=-c_0R^{-\frac{n-q}{2n}}.
\end{eqnarray}
Next if $u\in\Xi_1$, then $\varphi_\varepsilon(h_K(u))= -h_K(u)^{-|p|}$,
and hence we deduce from  the H\"older inequality,
the Blaschke-Santal\'o inequality Lemma~\ref{centroid} (ii) and (\ref{Xi01pn1}) that
\begin{eqnarray}
\nonumber
\int_{\Xi_1} \varphi_\varepsilon\circ h_K\,d\mu&\geq &
 -\tau_2\int_{\Xi_1} h_K^{-|p|}\,d\mathcal{H}^{n-1}\\
\nonumber
&\geq&-\tau_2\left(\int_{\Xi_1} h_K^{-n}\,d\mathcal{H}^{n-1}\right)^{\frac{|p|}n}
\mathcal{H}^{n-1}(\Xi_1)^{\frac{n-|p|}n}\\
\label{intXi1}
&\geq& -\tau_2(n\kappa_n^2)^{\frac{|p|}n}\left(\frac{4n(n-1)\kappa_{n-1}}{\sqrt{R}}\right)^{\frac{n-|p|}n}
=-c_1R^{-\frac{n-|p|}{2n}}.
\end{eqnarray}
Finally, if $u\in\Xi_2$, then $\varphi_\varepsilon(h_K(u))\geq  \varphi_\varepsilon(\sqrt{R})$, and hence
\begin{equation}
\label{intXi2}
\int_{\Xi_2} \varphi_\varepsilon\circ h_K\,d\mu\geq \tau_2n\kappa_n\cdot \varphi_\varepsilon(\sqrt{R})
=c_2\varphi_\varepsilon(\sqrt{R}).
\end{equation}
Writing $c(n,p,\tau_1,\tau_2)<0$ to denote the constant on the right hand side of (\ref{Phiball}) in the case
$p\in(-n,0)$, comparing (\ref{Phiball}), (\ref{intXi0}), (\ref{intXi1}) and (\ref{intXi2}) yields 
$$
-c_0R^{-\frac{n-q}{2n}}-c_1R^{-\frac{n-|p|}{2n}}
+c_2\varphi_\varepsilon(\sqrt{R})\leq c(n,p,\tau_1,\tau_2)<0,
$$
and in turn the existence of $R_0$
as $\lim_{R\to\infty}\varphi(\sqrt{R})=0$ by (\ref{philimit}). \ \ Q.E.D.\\

We deduce from the Blaschke selection theorem and the continuity of $\Phi_\varepsilon(K,\xi(K))$ 
(see Lemma~\ref{xiKcontinuous}) the existence of the extremal body $K^\varepsilon$.

\begin{corollary}
\label{Kepsilon}
For every $\varepsilon\in(0,\frac16)$, if $R_0>0$ is the number depending on $n$, $p$, $\tau_1$ and $\tau_2$ of 
Lemma~\ref{Knotbounded01} and Lemma~\ref{Knotboundedn1}, there exists
 $K^\varepsilon\in \mathcal{K}_1$ with $R(K^\varepsilon)\leq R_0$, such that
$$
\Phi_\varepsilon(K^\varepsilon,\xi(K^\varepsilon))=\min_{K\in \mathcal{K}_1}\Phi_\varepsilon(K,\xi(K)).
$$
\end{corollary}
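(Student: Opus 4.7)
The plan is the direct method of the calculus of variations: take a minimizing sequence in $\mathcal{K}_1$, use Lemmas~\ref{Knotbounded01}--\ref{Knotboundedn1} to confine it to a bounded region of $\R^n$, extract a Hausdorff-convergent subsequence via the Blaschke selection theorem, and then apply the continuity from Lemma~\ref{xiKcontinuous} to conclude that the limit realizes the infimum.

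First I would pick $K_m\in\mathcal{K}_1$ with $\Phi_\varepsilon(K_m,\xi(K_m))\to \inf_{K\in\mathcal{K}_1}\Phi_\varepsilon(K,\xi(K))$. Since $\kappa_n^{-1/n}B^n\in\mathcal{K}_1$ provides an explicit competitor, from some index on we may assume
$$\Phi_\varepsilon(K_m,\xi(K_m))\leq \Phi_\varepsilon(\kappa_n^{-1/n}B^n,\xi(\kappa_n^{-1/n}B^n)).$$
The contrapositive of Lemma~\ref{Knotbounded01} (for $p\in[0,1)$) or of Lemma~\ref{Knotboundedn1} (for $p\in(-n,0)$) then forces $R(K_m)\leq R_0$. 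Because $o\in K_m$ and $\sigma(K_m)\in K_m$, this yields $\|\sigma(K_m)\|\leq R_0$ and $K_m\subset 2R_0\,B^n$, so the sequence lives in a fixed compact set, and Blaschke selection produces a subsequence converging in Hausdorff metric to a compact convex set $K^\varepsilon\subset 2R_0\,B^n$ with $o\in K^\varepsilon$.

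The main obstacle I expect is ruling out degeneration of the Hausdorff limit to a lower-dimensional set; this would put $K^\varepsilon$ outside $\mathcal{K}_1$ and prevent Lemma~\ref{xiKcontinuous} from being applied. To preclude this I would invoke Lemma~\ref{centroid}\,(iii): the constraints $V(K_m)=1$ and $R(K_m)\leq R_0$ imply that each $K_m$ contains a ball of radius $\varrho_0:=1/\bigl((n+1)\kappa_{n-1}R_0^{n-1}\bigr)$ about $\sigma(K_m)$. After passing to a further subsequence along which $\sigma(K_m)\to z_0$ for some $z_0\in 2R_0\,B^n$, the limit satisfies $z_0+\varrho_0 B^n\subset K^\varepsilon$, so $K^\varepsilon$ has non-empty interior. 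Standard continuity of volume and of centroid under Hausdorff convergence to a full-dimensional body then gives $V(K^\varepsilon)=1$ and $R(K^\varepsilon)\leq R_0$, so $K^\varepsilon\in\mathcal{K}_1$.

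Once $K^\varepsilon\in\mathcal{K}_1$ is in hand, Lemma~\ref{xiKcontinuous} yields
$$\Phi_\varepsilon(K^\varepsilon,\xi(K^\varepsilon))=\lim_{m\to\infty}\Phi_\varepsilon(K_m,\xi(K_m))=\inf_{K\in\mathcal{K}_1}\Phi_\varepsilon(K,\xi(K)),$$
so $K^\varepsilon$ is a minimizer satisfying the desired bound $R(K^\varepsilon)\leq R_0$.
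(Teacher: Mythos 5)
Your proof is correct and takes essentially the same route as the paper, which simply invokes the Blaschke selection theorem together with the continuity result of Lemma~\ref{xiKcontinuous}. You fill in the details the paper leaves tacit, in particular the confinement $R(K_m)\le R_0$ via the contrapositive of Lemmas~\ref{Knotbounded01} and \ref{Knotboundedn1}, and the non-degeneration of the Hausdorff limit via Lemma~\ref{centroid}(iii), both of which are the right ingredients.
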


\section{$K^\varepsilon$ is quasi-smooth}
\label{secquasi-smooth}

 Lemma~\ref{Wulffvariation-inwards} below is essential in order to apply Lemma~\ref{xider}. For
any convex body $K$ and $\omega\subset {\mathbb S}^{n-1}$, we define
$$
\nu_K^{-1}(\omega)=\{x\in\partial K:\,\nu_K(x)\cap \omega\neq\emptyset\}.
$$
For $u\in {\mathbb S}^{n-1}$, we write $F(K,u)$ to denote the face of $K$ with exterior unit normal $u$; in other words,
$$
F(K,u)=\{x\in\partial K:\,\langle x,u\rangle=h_K(u)\}.
$$

\begin{lemma}
\label{Wulffvariation-inwards}
Let $K$  be a convex body with $rB^n\subset{\rm int}\, K$ for $r>0$, let $\omega\subset {\mathbb S}^{n-1}$ be closed, and let
$$
K_t=\{x\in K:\,\langle x,v\rangle\leq h_{K}(v)-t\mbox{ \ \ for every $v\in\omega$}\}
$$
for $t\in(0,r)$. If  $h_t$ is the support function of $K_t$,  then
$\lim_{t\to 0^+}
\frac{h_t(u)-h_K(u)}{t}$ exists for all $u\in {\mathbb S}^{n-1}$. 
\end{lemma}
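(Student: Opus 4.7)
The plan is to deduce the existence of the one-sided limit from two structural properties of the family $\{K_t\}$: that the support function $t\mapsto h_t(u)$ is concave, and that it satisfies a one-sided Lipschitz bound. Together these force the right-difference quotient to be monotone and bounded, so its limit at $0$ exists automatically, without any need to identify the limit explicitly.

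First I would check that $K_t$ is a convex body for every $t\in(0,r)$: since $rB^n\subset K$ we have $h_K(v)\geq r>t$ for every $v\in\omega$, so $o$ satisfies all the defining inequalities and lies in $K_t$. Next I would establish the inclusion
$$\lambda K_{t_1}+(1-\lambda)K_{t_2}\subseteq K_{\lambda t_1+(1-\lambda)t_2}$$
for $t_1,t_2\in[0,r)$ and $\lambda\in[0,1]$. This is immediate: if $x_i\in K_{t_i}$ then $\lambda x_1+(1-\lambda)x_2\in K$ by convexity of $K$, and for $v\in\omega$,
$$\langle \lambda x_1+(1-\lambda)x_2,v\rangle\leq \lambda(h_K(v)-t_1)+(1-\lambda)(h_K(v)-t_2)=h_K(v)-(\lambda t_1+(1-\lambda)t_2).$$
Passing to support functions at $u$ yields the concavity of $t\mapsto h_t(u)$ on $[0,r)$.

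For the one-sided Lipschitz estimate, I would show $(1-t/r)K\subseteq K_t$. Given $x\in K$, the point $y=(1-t/r)x$ lies in $K$ because $o\in K$, and for every $v\in\omega$,
$$\langle y,v\rangle=(1-t/r)\langle x,v\rangle\leq (1-t/r)h_K(v)\leq h_K(v)-t,$$
where the last step uses $h_K(v)\geq r$. Hence $h_t(u)\geq(1-t/r)h_K(u)$ for every $u\in\sfe$, and therefore $(h_t(u)-h_K(u))/t\geq -h_K(u)/r$.

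Finally, because $t\mapsto h_t(u)$ is concave with $h_0(u)=h_K(u)$, the standard monotonicity of difference quotients of concave functions gives that $t\mapsto (h_t(u)-h_K(u))/t$ is nonincreasing on $(0,r)$. Being nonincreasing and bounded below by $-h_K(u)/r$, it admits a limit as $t\to 0^+$ (equal to its supremum), which is what we needed. I do not expect any serious obstacle here; the only step that requires a little care is recognising that the Minkowski-type inclusion in the second paragraph follows from the affineness in $t$ of the defining inequalities, which is what makes the concavity available.
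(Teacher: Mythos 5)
Your argument is correct in substance and takes a genuinely different, shorter route than the paper. The paper's proof sets $X=\nu_K^{-1}(\omega)$ and splits into cases according to whether $u$ is an exterior normal at some boundary point of $K$ outside $X$, or the whole face $F(K,u)$ lies in $X$; in the second case it identifies the limit explicitly as a support value of a truncated support cone at a point in the relative interior of $F(K,u)$. You instead deduce existence abstractly: the defining constraints of $K_t$ are affine in $t$, which yields the Minkowski-sum inclusion $\lambda K_{t_1}+(1-\lambda)K_{t_2}\subseteq K_{\lambda t_1+(1-\lambda)t_2}$, hence concavity of $t\mapsto h_t(u)$ on $[0,r)$ with $K_0=K$, and concavity forces the difference quotient to be monotone in $t$. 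This is fully sufficient for the lemma as stated; what the paper's longer computation buys is an explicit formula for the one-sided derivative, which turns out not to be needed downstream.

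There is one sign slip at the end. For a nonincreasing function on $(0,r)$, the right limit at $0$ equals its \emph{supremum}, and finiteness of that limit requires an \emph{upper} bound on the quotient; the lower bound $-h_K(u)/r$ controls the wrong endpoint (the limit as $t\to r^-$). Fortunately the needed upper bound is free: $K_t\subseteq K$ gives $h_t(u)\leq h_K(u)$, hence $(h_t(u)-h_K(u))/t\leq 0$ for all $t\in(0,r)$, so the supremum, and therefore the limit, is finite (indeed $\leq 0$). With that correction the proof stands.
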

\noindent{\bf Remark } Readily, $\lim_{t\to 0^+} \frac{h_t(u)-h_K(u)}{t}\leq -1$ if $u\in\omega$.
\proof We set $X=\nu_K^{-1}(\omega)$; this is a compact set. We consider two cases: 
either $u$ is an exterior unit normal at some $y\not\in X$, or $F(K,u)\subset X$.

In the first case
 $h_t(u)=h_K(u)$ for sufficiently small $t$,  and hence
$\lim_{t\to 0} \frac{h_t(u)-h_K(u)}{t}=0$.

Next let $F(K,u)\subset X$ for $u\in {\mathbb S}^{n-1}$, and let 
$z\in {\rm relint}\,F(K,u)$. We define $\Sigma$ to be the support cone at $z$; namely,
$$
\Sigma={\rm cl}\{ \alpha (y-z):\,y\in K\mbox{ \ and \ }\alpha\geq 0\}
=\{y\in \R^n:\,\langle y,v\rangle\leq 0\mbox{ \ for \ }v\in \nu_K(z)\}.
$$
For small $t>0$, let
$$
C_t=\{x\in \Sigma:\, \langle x,v\rangle\leq -t\mbox{ \ for \ }v\in\omega\cap\nu_K(z)\};
$$
note that $C_t$ is a closed convex set satisfying $K_t-z\subset C_t$, and $C_t=tC_1$. We define
$$
\aleph =\sup\{\langle x,u\rangle:\,x\in C_1\}\leq 0,
$$
and claim that for any $\tau>0$  there exists $t_0>0$ depending on $z$, $K$ and $\tau$ such that if $t\in(0,t_0)$, then
\begin{equation}
\label{inward-t-tau}
(\aleph-\tau) t\leq h_t(u)-h_K(u)\leq \aleph t.
\end{equation}
 To prove (\ref{inward-t-tau}), we may assume that $z=o$, and hence 
$h_K(v)=0$ for all $v\in \nu_K(z)$. 
For  the upper bound in (\ref{inward-t-tau}), we observe that $K_t\subset C_t$, and hence
$$
h_t(u)-h_K(u)=h_t(u)\leq \sup\{\langle x,u\rangle:\,x\in C_t\}=\aleph t.
$$

For the lower bound, let $y_\tau\in {\rm int}\, C_1$ be such that
$$
\langle y_\tau,u\rangle>\aleph-\tau.
$$
Since $\omega\cap\nu_K(o)$ is compact, there exists $\delta>0$ such that
$$
\langle y_\tau,v\rangle\leq -1-\delta \mbox{ \ for }v\in\omega\cap\nu_K(o).
$$
Moreover, $y_\tau\in{\rm int}\,\Sigma$ yields the existence of $t_1>0$ such that
$ty_\tau\in K$ if $t\in(0,t_1]$. 

We also need one more constant reflecting the boundary structure of $K$ near $o$.
Recall that $h_K(w)\geq 0$ for all $w\in {\mathbb S}^{n-1}$, and $h_K(w)=0$  if and only if $w\in \nu_K(o)$. Since $\omega$ is compact, there exists $\gamma>0$ such that 
$$
\mbox{if $w\in\omega$ and $\|w-v\|\geq \delta/\|y_\tau\|$ for all 
$v\in\omega\cap\nu_K(o)$, then $h_K(w)\geq \gamma$.}
$$
We finally define $t_0\in(0,t_1]$ by the condition $t_0 \|y_\tau\|+t_0<\gamma$.

Let $t\in(0,t_0)$, and hence $ty_\tau\in K$. If $w\in \omega$ satisfies 
 $\|w-v\|\geq \delta/\|y_\tau\|$ for all 
$v\in\omega\cap\nu_K(o)$, then
$$
\langle ty_\tau, w\rangle\leq t_0 \|y_\tau\|<\gamma-t_0<h_K(w)-t.
$$
However, if $w\in \omega$ and there exists $v\in\omega\cap\nu_K(o)$ satisfying 
$\|w-v\|< \delta/\|y_\tau\|$, then
$$
\langle ty_\tau, w\rangle=\langle ty_\tau, w-v\rangle+\langle ty_\tau, v\rangle\leq 
t\delta+t(-1-\delta)=-t\leq h_K(w)-t.
$$
We deduce that $ty_\tau\in K_t$, thus 
$$
h_t(u)-h_K(u)\geq \langle ty_\tau, u\rangle\geq (\aleph-\tau) t,
$$
concluding the proof of (\ref{inward-t-tau}).

In turn, (\ref{inward-t-tau}) yields that  $\lim_{t\to 0^+}
\frac{h_t(u)-h_K(u)}{t}=\aleph$. Q.E.D.\\

A crucial fact for us is Alexandrov's Lemma~\ref{Alexandrov} (see Lemma 7.5.3 in \cite{SCH}). To state this, let
$g: (-r,r)\times {\mathbb S}^{n-1} \to \R$, $r>0$, verify
\begin{itemize}
\item $g(0,u)=h_K(u)$ for a convex body $K$;
\item for every $u\in\sfe$ the limit $\lim_{t\to 0}\frac{g(t,u)-g(0,u)}{t}=\partial_1g(0,u)$ exists (finite) and the convergence is uniform 
with respect to $u\in {\mathbb S}^{n-1}$; moreover
$\partial_1g(0,u)$ is continuous with respect to $u\in {\mathbb S}^{n-1}$;
\item $K_t=\{x\in \R^n:\,\langle x,u\rangle \leq g(t,u)\mbox{ \ for any }u\in {\mathbb S}^{n-1}\}$ is a convex body
for $t\in(-r,r)$.
\end{itemize}

\begin{lemma}[Alexandrov]
\label{Alexandrov}
In the notation introduced above, we have
$$
\lim_{t\to 0}\frac{V(K_t)-V(K)}t=\int_{{\mathbb S}^{n-1}}\partial_1g(0,u)\,d S_K(u).
$$
\end{lemma}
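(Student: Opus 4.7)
The plan is to derive matching upper and lower bounds on $(V(K_t)-V(K))/t$ as $t\to 0$, using Minkowski's first inequality $V_1(L,M)^n\geq V(L)^{n-1}V(M)$ together with the mixed-volume identity $nV_1(L,M)=\int_{\sfe}h_M\,dS_L$. A key auxiliary fact is that since $K_t$ is the Wulff shape of $g(t,\cdot)$, one has $h_{K_t}(u)\leq g(t,u)$ for every $u$, with equality for $S_{K_t}$-a.e.\ $u$: at a smooth boundary point $x\in\partial K_t$, the unique outer normal $u=\nu_{K_t}(x)$ must arise from a binding constraint, forcing $g(t,u)=\langle x,u\rangle=h_{K_t}(u)$. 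Consequently $nV(K_t)=\int g(t,u)\,dS_{K_t}(u)$.

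For the \emph{upper bound}, I would combine $nV_1(K,K_t)=\int h_{K_t}\,dS_K\leq\int g(t,u)\,dS_K$ with Minkowski in the form $V(K)^{(n-1)/n}V(K_t)^{1/n}\leq V_1(K,K_t)$. Subtracting the $t=0$ equalities and dividing by $t>0$ yields
$$
V(K)^{(n-1)/n}\cdot\frac{V(K_t)^{1/n}-V(K)^{1/n}}{t}\leq\frac{1}{n}\int\frac{g(t,u)-g(0,u)}{t}\,dS_K(u).
$$
By uniform convergence the right side tends to $\tfrac{1}{n}\int\partial_1 g(0,u)\,dS_K$, and the factorization $a_t^n-a^n=(a_t-a)(a_t^{n-1}+\cdots+a^{n-1})$ combined with $V(K_t)\to V(K)$ converts the left side into $\tfrac{1}{n}\limsup_{t\to 0^+}(V(K_t)-V(K))/t$, giving the desired upper bound (and incidentally $V(K_t)-V(K)=O(t)$); the argument for $t\to 0^-$ is parallel.

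For the \emph{lower bound}, I would use the dual identity $nV_1(K_t,K)=\int h_K\,dS_{K_t}=\int g(0,u)\,dS_{K_t}$ and subtract it from $nV(K_t)=\int g(t,u)\,dS_{K_t}$ to obtain
$$
n\bigl(V(K_t)-V_1(K_t,K)\bigr)=\int\bigl[g(t,u)-g(0,u)\bigr]\,dS_{K_t}(u).
$$
Divided by $t$, the right side tends to $\int\partial_1 g(0,u)\,dS_K$ by uniform convergence of the integrand together with the weak convergence $S_{K_t}\to S_K$ (a consequence of $K_t\to K$ in the Hausdorff metric). Minkowski gives $V_1(K_t,K)\geq V(K_t)^{(n-1)/n}V(K)^{1/n}$, which expanded to first order in $\delta:=V(K_t)-V(K)$ reads $V_1(K_t,K)-V(K)\geq\tfrac{n-1}{n}\delta+O(\delta^2)$. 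Writing $\delta=[V(K_t)-V_1(K_t,K)]+[V_1(K_t,K)-V(K)]$, rearranging to isolate $\tfrac{1}{n}\delta$ on the left, dividing by $t$, and using the $O(t)$ bound on $\delta$ to absorb the quadratic remainder yields $\liminf(V(K_t)-V(K))/t\geq\int\partial_1 g(0,u)\,dS_K$, matching the upper bound.

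The main technical obstacle will be justifying the Wulff-shape identity $h_{K_t}=g(t,\cdot)$ on $\supp S_{K_t}$ (relying on the $\mathcal{H}^{n-1}$-negligibility of singular boundary points) and passing to the limit in integrals against the varying measure $S_{K_t}$, which requires combining uniform convergence of the integrand with weak convergence of surface-area measures under Hausdorff convergence.
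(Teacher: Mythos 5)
Your proposal follows a valid and essentially standard route: bound $(V(K_t)-V(K))/t$ from above and below using the Minkowski first inequality together with the mixed-volume representations $nV_1(L,M)=\int h_M\,dS_L$, reducing both sides to the same integral $\int\partial_1 g(0,\cdot)\,dS_K$. This is close in spirit to the proof in Schneider (Lemma~7.5.3), which the paper cites rather than re-proving. The overall structure is sound, but two technical points deserve attention.

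First, the Wulff identity. Your argument that $h_{K_t}(u)=g(t,u)$ for $S_{K_t}$-a.e.\ $u$ (``a smooth boundary point forces a binding constraint'') is the standard one, but it requires $g(t,\cdot)$ to be continuous for the fixed $t$ in question: continuity is used both to show that the active set $\{v:\langle x,v\rangle=g(t,v)\}$ is nonempty for $x\in\partial K_t$, and that it generates the normal cone at $x$. The hypotheses of the lemma do not guarantee that $g(t,\cdot)$ is continuous in $u$ for each fixed $t$; they give only $g(t,u)=h_K(u)+t\,\partial_1g(0,u)+t\rho(t,u)$ with $\rho(t,\cdot)$ bounded and $\sup_u|\rho(t,u)|\to 0$. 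To make your sketch rigorous one should first replace $g$ by the linearization $\bar g(t,\cdot)=h_K+t\,\partial_1g(0,\cdot)$, which \emph{is} continuous, and observe that the resulting Wulff shapes differ from $K_t$ by $o(t)$ in Hausdorff distance, so that $V(K_t)$ and $V(\bar K_t)$ agree to order $o(t)$. Then your argument applies verbatim to $\bar g$.

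Second, the $O(t)$ bound on $\delta:=V(K_t)-V(K)$. You invoke it to absorb the $O(\delta^2)/t$ remainder in the lower bound, and you assert it ``incidentally'' follows from the upper bound step, but that step only gives a one-sided estimate ($\delta/t$ bounded above for $t>0$, below for $t<0$); nothing yet prevents $V(K_t)$ from dropping faster than $O(t)$. The fix is cheap: after translating so that $o\in\operatorname{int}K$ (the statement is translation-invariant, and translating $K$ by $z$ replaces $g$ by $g+\langle z,\cdot\rangle$ without changing $\partial_1 g$), the uniform estimate $|g(t,u)-h_K(u)|\leq C|t|$ shows $K_t$ lies between $(1-C'|t|)K$ and $K+C|t|B^n$, giving $|\delta|=O(|t|)$ directly. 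Alternatively, you can avoid the Taylor expansion altogether: write $V(K_t)-V(K_t)^{(n-1)/n}V(K)^{1/n}=\frac{b_t^{n-1}}{\sum_{j=0}^{n-1}b_t^j a^{n-1-j}}\,\delta$ with $a=V(K)^{1/n}$, $b_t=V(K_t)^{1/n}$; since the prefactor tends to $1/n$ (using only $V(K_t)\to V(K)$, no rate needed), dividing by $t$ gives the lower bound without any $O(\delta^2)$ term. The same factorization streamlines the upper bound.

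With these two repairs the argument is complete and correct.
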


Next we present a way to improve on $\Phi_\varepsilon(K,\xi(K))$ while staying in the family $\mathcal{K}_1$.

\begin{prop}
\label{improve}
If for $K\in\mathcal{K}_1$ there exists
a closed set $\omega\subset {\mathbb S}^{n-1}$ with $\mathcal{H}^{n-1}(\omega)>0$, such that $S_K(\omega)=0$, then there exists
a  convex body $\widetilde{K}\in\mathcal{K}_1$ such that
$\Phi_\varepsilon(\widetilde{K},\xi(\widetilde{K}))<\Phi_\varepsilon(K,\xi(K))$.
\end{prop}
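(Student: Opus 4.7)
The plan is to push the faces of $K$ with normals in $\omega$ inward to form $K_t$ as in Lemma~\ref{Wulffvariation-inwards}, and then rescale to volume one. Since $\Phi_\varepsilon(K-z,\xi(K-z)) = \Phi_\varepsilon(K,\xi(K))$ for every $z\in \R^n$, I translate if necessary and assume $rB^n \subset \mathrm{int}\,K$ for some $r>0$. Lemma~\ref{Wulffvariation-inwards} produces the pointwise one-sided derivative
\[
\partial_1 h_{K_t}(0,u) := \lim_{t\to 0^+} \frac{h_{K_t}(u) - h_K(u)}{t},
\]
with $\partial_1 h_{K_t}(0,u) \leq 0$ everywhere and $\partial_1 h_{K_t}(0,u) \leq -1$ on $\omega$. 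Since $h_K \geq r$, the inclusion $(1-t/r)K \subset K_t \subset K$ holds for $t<r$, yielding the uniform Lipschitz estimate $|h_{K_t}(u) - h_K(u)| \leq ct$. Thus the hypotheses of Lemma~\ref{xider} are met, and Corollary~\ref{center-irrelevant} gives
\[
\left.\frac{d}{dt}\Phi_\varepsilon(K_t,\xi(K_t))\right|_{t=0^+} \leq -\int_\omega \varphi_\varepsilon'\big(h_K(u) - \langle u,\xi(K)\rangle\big)\,f(u)\,d\mathcal{H}^{n-1}(u) <0,
\]
the strict negativity using $\varphi_\varepsilon'>0$, $f\geq \tau_1>0$, and $\mathcal{H}^{n-1}(\omega)>0$.

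The heart of the argument is to show $V(K_t) = V(K) + o(t)$, which allows the subsequent rescaling to volume one to contribute only at the $o(t)$ level. This is the \emph{main obstacle}, because Lemma~\ref{Wulffvariation-inwards} furnishes only pointwise---not uniform---differentiability of $h_{K_t}$ in $t$, and the pointwise derivative need not be continuous in $u$, so Alexandrov's Lemma~\ref{Alexandrov} cannot be applied directly to $K_t$. Instead, given any $\eta>0$, outer regularity of $S_K$ provides an open $U\supset\omega$ with $S_K(U)<\eta$; fix a continuous $\chi:\mathbb{S}^{n-1}\to[0,1]$ (via Urysohn) with $\chi\equiv 1$ on $\omega$ and $\supp\chi \subset U$, and form the auxiliary Wulff shape $\widetilde{K}_t^\chi = \{x:\langle x,u\rangle \leq h_K(u) - t\chi(u)\text{ for every }u\in\mathbb{S}^{n-1}\}$. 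On $\omega$ the defining constraints of $\widetilde K_t^\chi$ and $K_t$ coincide, while off $\omega$ those of $\widetilde K_t^\chi$ are at least as restrictive as those of $K_t$ (since $\chi\geq 0$), so $\widetilde K_t^\chi \subset K_t$. Alexandrov's Lemma~\ref{Alexandrov} applied to $g(t,u) = h_K(u) - t\chi(u)$ (whose derivative $-\chi(u)$ is continuous in $u$ and for which the convergence in $t$ is exact) yields $V(K) - V(\widetilde K_t^\chi) = t\int\chi\,dS_K + o(t) \leq t\eta + o(t)$. Hence $V(K) - V(K_t) \leq t\eta + o(t)$, and since $\eta>0$ is arbitrary, $V(K_t) = V(K) + o(t)$.

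Finally, set $\lambda(t) = V(K_t)^{-1/n} = 1 + o(t)$ and $\widetilde{K}(t) = \lambda(t)\,K_t$, which lies in $\mathcal{K}_1$ (it contains $o$ since $K_t$ does, because $h_K \geq r > t$ on $\omega$ for $t$ small). The support function $h_{\widetilde K(t)}(u) = \lambda(t)\,h_{K_t}(u)$ inherits a uniform Lipschitz estimate in $t$, and since $(\lambda(t)-1)h_{K_t}(u)/t \to 0$ pointwise,
\[
\lim_{t\to 0^+} \frac{h_{\widetilde K(t)}(u) - h_K(u)}{t} = \partial_1 h_{K_t}(0,u) \quad\text{for every }u\in\mathbb{S}^{n-1}.
\]
Applying Corollary~\ref{center-irrelevant} to the family $\widetilde K(t)$ therefore yields the same strictly negative right derivative of $\Phi_\varepsilon(\widetilde K(t),\xi(\widetilde K(t)))$ at $t=0^+$, so for any sufficiently small $t_0>0$ the body $\widetilde K := \widetilde K(t_0) \in \mathcal{K}_1$ satisfies $\Phi_\varepsilon(\widetilde K,\xi(\widetilde K)) < \Phi_\varepsilon(K,\xi(K))$.
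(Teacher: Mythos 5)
Your proof is correct and takes essentially the same route as the paper's: form $K_t$ by pushing in the $\omega$-normals, control $V(K_t)=V(K)+o(t)$ via Alexandrov's lemma applied to an inner Wulff shape driven by a continuous bump function approximating the indicator of $\omega$ (using $S_K(\omega)=0$), rescale to volume one, and compute the strictly negative right derivative of $\Phi_\varepsilon$ using Lemma~\ref{Wulffvariation-inwards}, Lemma~\ref{xider} and Corollary~\ref{center-irrelevant}. The deviations (using $(1-t/r)K\subset K_t$ for the Lipschitz bound rather than the paper's inner parallel body $K_{0,t}$; explicit appeal to outer regularity and Urysohn) are cosmetic.
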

\proof
 For small
$t\geq 0$, we consider
$$
K_t=\{x\in K:\,\langle x,u\rangle\leq h_{K}(u)-t\mbox{ \ \ for $u\in\omega$}\},
$$
and
$$
\widetilde{K}_t=V(K_t)^{-1/n}K_t\in\mathcal{K}_1.
$$
We define $\alpha(t)=V(K_t)^{-1/n}$, so that in particular $\alpha(0)=1$. We claim that
\begin{equation}
\label{alphaprime}
\alpha'(0)=0.
\end{equation}
Since $\alpha$ is monotone decreasing, it is equivalent to prove that if $\eta\in(0,1)$, then
\begin{equation}
\label{alphaprimeeta}
\liminf_{t\to 0^+}\frac{V(K_t)-V(K)}t\geq -\eta.
\end{equation}
Since $S_K(\omega)=0$ and $\omega$ is closed, we can choose a continuous function $\psi:\,{\mathbb S}^{n-1}\to[0,1]$ such that
$\psi(u)=1$ if $u\in\omega$, and
$$
\int_{{\mathbb S}^{n-1}}\psi\,dS_K\leq \eta.
$$
 For small
$t>0$, we consider $\gamma_t=h_K-t\psi$ and
$$
K_{\psi,t}=\{x\in K:\,\langle x,u\rangle\leq \gamma_t(u)\mbox{ \ \ for $u\in\omega$}\},
$$
and hence $K_{\psi,t}\subset K_t$. Using Lemma~\ref{Alexandrov}, we deduce that
$$
\liminf_{t\to 0^+}\frac{V(K_t)-V(K)}t\geq \left.\frac{d}{dt}V(K_{\psi,t})\right|_{t=0^+}=
-\int_{{\mathbb S}^{n-1}}\psi\,dS_K\geq -\eta.
$$
We conclude (\ref{alphaprimeeta}), and in turn (\ref{alphaprime}).

We set $h(t,u)=h_{K_t}(u)$. As
$$
K_{0,t}=\{x\in K:\,x+tB^n\subset K\}\subset K_t,
$$
Lemma~\ref{Wulffvariation} (i), with $C=B^n$, yields that there is $c>0$ such that if $t>0$ is small,
then 
$$
-ct\leq h_{K_{0,t}}(u)-h_K(u)\leq h(t,u)-h(0,u)\leq 0
$$ 
for any $u\in {\mathbb S}^{n-1}$. In addition, we deduce from
Lemma~\ref{Wulffvariation-inwards} that
$\lim_{t\to 0^+}\frac{h(t,u)-h(0,u)}t=\partial_1 h(0,u)\leq 0$
exists for any $u\in {\mathbb S}^{n-1}$ where $\partial_1 h(0,u)\leq -1$ for $u\in\omega$ by definition.
Next let $\tilde{h}(t,u)=\alpha(t)h(t,u)=h_{\widetilde{K}_t}(u)$ for $u\in {\mathbb S}^{n-1}$
and small $t>0$. Therefore there exists $\tilde{c}>0$ such that if $t>0$ is small,
then $|\tilde{h}(t,u)-\tilde{h}(0,u)|\leq \tilde{c}t$ for any $u\in {\mathbb S}^{n-1}$, and $\alpha(0)=1$ and
(\ref{alphaprime}) implies that
$$
\lim_{t\to 0^+}\frac{\tilde{h}(t,u)-\tilde{h}(0,u)}t=\partial_1 \tilde{h}(0,u)=\partial_1 h(0,u)\leq 0
$$
exists for any $u\in {\mathbb S}^{n-1}$, where $\partial_1 \tilde{h}(0,u)\leq -1$ for $u\in\omega$.
We may assume that $\xi(K)=o$ and $K\subset RB^n$ for $R>0$ where $K=\tilde{K}_0$.
As $\varphi'_\varepsilon$ is positive and monotone decreasing, $\mathcal{H}^{n-1}(\omega)>0$ and Corollary~\ref{center-irrelevant} imply
$$
\left.\frac{d}{dt}\Phi_\varepsilon(\widetilde{K}_t,\xi(\widetilde{K}_t))\right|_{t=0}=
\int_{{\mathbb S}^{n-1}}\partial_1\tilde{h}(0,u)\cdot \varphi'_\varepsilon(h_K(u))\,d\mu(u)\leq
\int_\omega (-1)\varphi'_\varepsilon(R)\,d\mu(u)<0.
$$
Therefore $\Phi_\varepsilon(\widetilde{K}_t,\xi(\widetilde{K}_t))<\Phi_\varepsilon(K,\xi(K))$ for small $t>0$, 
which proves Lemma~\ref{improve}. Q.E.D.\\

\begin{corollary}
\label{quasi-smooth}
$K^\varepsilon$ is quasi-smooth.
\end{corollary}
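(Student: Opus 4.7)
The plan is to argue by contradiction using Proposition~\ref{improve}. Assume $K^\varepsilon$ is not quasi-smooth, so that
$$
\omega_0 := {\mathbb S}^{n-1}\setminus \nu_{K^\varepsilon}(\partial' K^\varepsilon)
$$
has $\mathcal{H}^{n-1}(\omega_0)>0$. The goal is to exhibit a closed $\omega\subset\omega_0$ with $\mathcal{H}^{n-1}(\omega)>0$ and $S_{K^\varepsilon}(\omega)=0$, and then apply Proposition~\ref{improve} to produce $\widetilde{K}\in\mathcal{K}_1$ with $\Phi_\varepsilon(\widetilde{K},\xi(\widetilde{K}))<\Phi_\varepsilon(K^\varepsilon,\xi(K^\varepsilon))$. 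This would contradict the minimality of $K^\varepsilon$ established in Corollary~\ref{Kepsilon}.

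First I would verify that $\omega_0$ is Borel. Since $\Xi_{K^\varepsilon}$ has $\mathcal{H}^{n-1}$-measure zero and is a Borel (even $F_\sigma$) subset of the compact set $\partial K^\varepsilon$, the set $\partial' K^\varepsilon$ is $\sigma$-compact, say $\partial' K^\varepsilon=\bigcup_m F_m$ with $F_m$ compact. The Gauss map $\nu_{K^\varepsilon}$ is single-valued and continuous on $\partial' K^\varepsilon$, so $\nu_{K^\varepsilon}(F_m)$ is compact and $\nu_{K^\varepsilon}(\partial' K^\varepsilon)=\bigcup_m \nu_{K^\varepsilon}(F_m)$ is $\sigma$-compact, hence Borel. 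Therefore $\omega_0$ is Borel, and since $\mathcal{H}^{n-1}\lfloor_{{\mathbb S}^{n-1}}$ is a finite Radon measure, inner regularity gives a closed $\omega\subset\omega_0$ with $\mathcal{H}^{n-1}(\omega)>0$.

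Next I would check the key identity $S_{K^\varepsilon}(\omega)=0$. If $u\in\omega\subset\omega_0$ and $x\in \nu_{K^\varepsilon}^{-1}(\{u\})$, then $x$ cannot be smooth: otherwise $\nu_{K^\varepsilon}(x)=\{u\}$ would place $u$ in $\nu_{K^\varepsilon}(\partial' K^\varepsilon)$. Hence $\nu_{K^\varepsilon}^{-1}(\omega)\subset \Xi_{K^\varepsilon}$, and by the definition of $S_{K^\varepsilon}$ together with $\mathcal{H}^{n-1}(\Xi_{K^\varepsilon})=0$,
$$
S_{K^\varepsilon}(\omega)=\mathcal{H}^{n-1}(\nu_{K^\varepsilon}^{-1}(\omega))=0.
$$

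With $\omega$ closed, $\mathcal{H}^{n-1}(\omega)>0$, and $S_{K^\varepsilon}(\omega)=0$, all the hypotheses of Proposition~\ref{improve} are satisfied for the convex body $K=K^\varepsilon\in\mathcal{K}_1$, yielding the desired $\widetilde{K}\in\mathcal{K}_1$ which strictly decreases $\Phi_\varepsilon$, contradicting Corollary~\ref{Kepsilon}. The only non-routine point is the measurability/regularity step producing the closed subset $\omega$; the rest is a direct combination of the definition of quasi-smoothness with Proposition~\ref{improve}.
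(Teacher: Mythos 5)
Your argument is the same as the paper's: assume $K^\varepsilon$ is not quasi-smooth, extract a closed $\omega\subset{\mathbb S}^{n-1}\setminus\nu_{K^\varepsilon}(\partial' K^\varepsilon)$ of positive $\mathcal{H}^{n-1}$-measure, observe that $\nu_{K^\varepsilon}^{-1}(\omega)\subset\Xi_{K^\varepsilon}$ forces $S_{K^\varepsilon}(\omega)=0$, and then invoke Proposition~\ref{improve} to contradict the minimality of $K^\varepsilon$ from Corollary~\ref{Kepsilon}. The paper states the existence of such a closed $\omega$ without comment; you try to justify it, which is the right instinct, but the justification as written has a small slip: $\Xi_{K^\varepsilon}$ being $F_\sigma$ makes $\partial' K^\varepsilon$ a $G_\delta$ subset of $\partial K^\varepsilon$, and a $G_\delta$ set is not in general $\sigma$-compact (consider the irrationals in $[0,1]$). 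What one actually needs is only that $\nu_{K^\varepsilon}(\partial' K^\varepsilon)$ be universally measurable, and this does hold because it is the image of a Borel set under the (single-valued, continuous) restriction of the Gauss map, hence an analytic (Suslin) set; universal measurability then gives inner regularity of $\mathcal{H}^{n-1}$ on its complement and the desired closed $\omega$. So the conclusion of your measurability step is correct, but the route through $\sigma$-compactness is not; replace it with the analytic-set observation and the argument is fully rigorous.
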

\proof Let $\partial'K$ and $\Xi_K$ be as in the definition of quasi-smooth body, immediately after the proof of Lemma~\ref{oboundary}. If $K\in \mathcal{K}_1$ is not quasi-smooth, then 
$\mathcal{H}^{n-1}({\mathbb S}^{n-1}\backslash \nu_K(\partial'K))>0$.
Now there exists a closed set $\omega\subset {\mathbb S}^{n-1}\backslash \nu_K(\partial'K)$ 
such that $\mathcal{H}^{n-1}(\omega)>0$. If an exterior normal at  
 $x\in\partial K$ lies in $\omega$, then $x\in\Xi_K$, and hence
 $S_K(\omega)\leq \mathcal{H}^{n-1}(\Xi_K)=0$.
Thus  
Proposition~\ref{improve} 
yields the existence of a  convex body $\widetilde{K}\in\mathcal{K}_1$ such that
$\Phi(\widetilde{K},\xi(\widetilde{K}))<\Phi(K,\xi(K))$.
We conclude that $K^\varepsilon$ is quasi-smooth by its extremality property. Q.E.D.

\section{The variational formula (to  get $\lambda_\varepsilon$)}
\label{secEuler-Lagrange}

We define
\begin{equation}
\label{lambdaepsilon}
\lambda_\varepsilon=\frac1n\int_{{\mathbb S}^{n-1}}h_{K^\varepsilon-\xi(K^\varepsilon)}(u)\cdot
\varphi'_\varepsilon(h_{K^\varepsilon-\xi(K^\varepsilon)}(u))\,d\mu(u).
\end{equation}

\begin{prop}
\label{Euler-Lagrange}
$\varphi'_\varepsilon(h_{K^\varepsilon}(u)-\langle\xi(K^\varepsilon),u\rangle)\,d\mu(u)=
\lambda_\varepsilon\,dS_{K^\varepsilon}$ as measures on ${\mathbb S}^{n-1}$.
\end{prop}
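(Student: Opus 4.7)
The plan is to derive Proposition~\ref{Euler-Lagrange} as the Euler--Lagrange equation for the minimizer $K^\varepsilon$ of $\Phi_\varepsilon(\cdot,\xi(\cdot))$ on $\mathcal{K}_1$ via a Wulff shape perturbation, using Alexandrov's Lemma~\ref{Alexandrov} and Corollary~\ref{center-irrelevant}.

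First I would observe that $\Phi_\varepsilon(K+v,\xi(K+v))=\Phi_\varepsilon(K,\xi(K))$ for every $v\in\R^n$: combining $h_{K+v}(u)=h_K(u)+\langle u,v\rangle$ with the uniqueness in Proposition~\ref{xiinside} yields $\xi(K+v)=\xi(K)+v$, so the argument of $\varphi_\varepsilon$ is unchanged. Since $\xi(K^\varepsilon)\in{\rm int}\,K^\varepsilon$, the translate $K^\varepsilon-\xi(K^\varepsilon)$ is also a minimizer in $\mathcal{K}_1$; translation invariance of $S_K$ then lets me assume from the start that $\xi(K^\varepsilon)=o\in{\rm int}\,K^\varepsilon$ and reduce the task to proving $\varphi'_\varepsilon(h_{K^\varepsilon})\,d\mu=\lambda_\varepsilon\,dS_{K^\varepsilon}$. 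Write $K=K^\varepsilon$. For any convex body $C$ with $o\in{\rm int}\,C$, I choose $r>0$ with $rC\subset K$ and for $t\in(-r,r)$ form the Wulff shape
$$
K_t=\{x\in\R^n:\,\langle x,u\rangle\leq h_K(u)+t\,h_C(u)\text{ for all }u\in\sfe\},
$$
together with its volume-one rescaling $\widetilde K_t=V(K_t)^{-1/n}K_t$, which lies in $\mathcal{K}_1$ for $|t|$ small.

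Writing $h_t=h_{K_t}$ and $\alpha(t)=V(K_t)^{-1/n}$, the Lipschitz bound $|h_t(u)-h_K(u)|\leq (R/r)|t|$ of Lemma~\ref{Wulffvariation}(i) together with the smoothness of $\alpha$ near $0$ gives hypothesis~(1) of Lemma~\ref{xider} for the rescaled family, while the quasi-smoothness of $K$ from Corollary~\ref{quasi-smooth} combined with Lemma~\ref{Wulffvariation}(ii) yields $\lim_{t\to 0}(h_t(u)-h_K(u))/t=h_C(u)$ for $\mathcal{H}^{n-1}$-a.e.\ $u$, which is hypothesis~(2). Alexandrov's Lemma~\ref{Alexandrov} applied to $K_t$ gives $V'(0)=\int_\sfe h_C\,dS_K$, hence $\alpha'(0)=-\tfrac{1}{n}\int_\sfe h_C\,dS_K$ and
$$
\partial_t h_{\widetilde K_t}(u)\big|_{t=0}=\alpha'(0)h_K(u)+h_C(u)\qquad\text{for a.e.\ }u\in\sfe.
$$
Since $K$ is a minimizer and $\widetilde K_t\in\mathcal{K}_1$ for small $|t|$, Corollary~\ref{center-irrelevant} forces
$$
0=\int_\sfe\bigl(\alpha'(0)h_K(u)+h_C(u)\bigr)\varphi'_\varepsilon(h_K(u))\,d\mu(u),
$$
and using $\xi(K)=o$ in (\ref{lambdaepsilon}) I have $\int_\sfe h_K\,\varphi'_\varepsilon(h_K)\,d\mu=n\lambda_\varepsilon$. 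Substituting the value of $\alpha'(0)$ produces the test-function identity
$$
\int_\sfe h_C(u)\,\varphi'_\varepsilon(h_K(u))\,d\mu(u)=\lambda_\varepsilon\int_\sfe h_C(u)\,dS_K(u)
$$
for every convex body $C$ with $o\in{\rm int}\,C$.

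To conclude I would invoke a standard density argument: differences of support functions of bodies with $o$ in their interior are dense in $C(\sfe)$ (any smooth enough $g$ becomes a support function after adding a sufficiently large multiple of $h_{B^n}\equiv 1$), and both $\varphi'_\varepsilon(h_K)\,d\mu$ and $\lambda_\varepsilon\,dS_K$ are finite Borel measures on $\sfe$ (the former because $h_K$ is bounded above by ${\rm diam}\,K$ and below by the positive in-radius of $K$, $\varphi'_\varepsilon$ is positive and decreasing, and $f$ satisfies~(\ref{fcondition})). Hence the displayed identity extends to all continuous test functions, proving equality of the two measures; reversing the initial translation recovers the proposition. The principal technical obstacle is handling two sources of $t$-dependence simultaneously: the Wulff variation of the body itself (where the quasi-smoothness of $K^\varepsilon$ is essential for the a.e.\ differentiability demanded by Lemma~\ref{xider}) and the implicit variation of its optimal center $\xi(\widetilde K_t)$, which is absorbed cleanly by Corollary~\ref{center-irrelevant} since Corollary~\ref{intcond} eliminates the $\dot\xi$ contribution from the chain rule.
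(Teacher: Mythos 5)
Your proposal is correct and follows essentially the same route as the paper: Wulff perturbation $K_t$ by $h_C$, volume renormalization $\widetilde K_t$, Alexandrov's lemma for $\alpha'(0)$, Lemma~\ref{Wulffvariation} plus quasi-smoothness to verify the hypotheses of Lemma~\ref{xider} so that Corollary~\ref{center-irrelevant} applies, substitution of the definition of $\lambda_\varepsilon$, and density of differences of support functions. The only additions are the explicit translation-invariance remark justifying $\xi(K^\varepsilon)=o$ and the observation that both sides are finite Borel measures, both of which the paper leaves implicit.
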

\proof To simplify the argument, we write $K=K^\varepsilon$, and assume that $\xi(K)=o$. First we claim that if $C$ is any convex body with $o\in{\rm int} C$, then
\begin{equation}
\label{support}
\int_{{\mathbb S}^{n-1}}h_C\lambda_\varepsilon\,dS_{K}=\int_{{\mathbb S}^{n-1}}h_C(u)\varphi'_\varepsilon(h_K(u))\,d\mu(u).
\end{equation}
Assuming $rC\subset K$ for $r>0$, if $t\in(-r,r)$, then 
we consider
$$
K_t=\{x\in K:\,\langle x,u\rangle\leq h_{K}(u)+th_C(u)\mbox{ \ \ for $u\in {\mathbb S}^{n-1}$}\},
$$
and
$$
\widetilde{K}_t=V(K_t)^{-1/n}K_t\in\mathcal{K}_1.
$$
We define $\alpha(t)=V(K_t)^{-1/n}$, so that in particular $\alpha(0)=1$.
Lemma~\ref{Alexandrov} yields that
$$
\left.\frac{d}{dt}V(K_t)\right|_{t=0}=\int_{{\mathbb S}^{n-1}}h_C\,dS_K,
$$
and hence
\begin{equation}
\label{alphader0}
\alpha'(0)=\frac{-1}n\int_{{\mathbb S}^{n-1}}h_C\,dS_K.
\end{equation}

We write $h(t,u)=h_{K_t}(u)$. Since $K$ is quasi-smooth,
Lemma~\ref{Wulffvariation} (i) and (ii)
imply that there exists $c>0$ such that if $t\in(-r,r)$,
then $|h(t,u)-h(0,u)|\leq c |t|$ for any $u\in {\mathbb S}^{n-1}$, and 
$\lim_{t\to 0}\frac{h(t,u)-h(0,u)}t=h_C(u)$
exists for $\mathcal{H}^{n-1}$-a.e. $u\in {\mathbb S}^{n-1}$. 
Next let $\tilde{h}(t,u)=\alpha(t)h(t,u)=h_{\widetilde{K}_t}(u)$ for $u\in {\mathbb S}^{n-1}$
and $t\in (-r,r)$. From the properties of $h(t,u)$ mentioned above and (\ref{alphader0})
it follows the existence of $\tilde{c}>0$ such that if $t\in (-r,r)$,
then $|\tilde{h}(t,u)-\tilde{h}(0,u)|\leq \tilde{c}|t|$ for any $u\in {\mathbb S}^{n-1}$, and 
$$
\lim_{t\to 0}\frac{\tilde{h}(t,u)-\tilde{h}(0,u)}t=\partial_1 \tilde{h}(0,u)=\alpha'(0)h_K(u)+h_C(u)
$$
for any $u\in {\mathbb S}^{n-1}$.
As $\Phi(\widetilde{K}_t,\xi(\widetilde{K}_t))$ has a minimum at $t=0$ by the extremal property of 
$K^\varepsilon=\widetilde{K}_0=K$,
Corollary~\ref{center-irrelevant} implies
\begin{eqnarray*}
0&=&\left.\frac{d}{dt}\Phi(\widetilde{K}_t,\xi(\widetilde{K}_t))\right|_{t=0}=
\int_{{\mathbb S}^{n-1}}\partial_1\tilde{h}(0,u)\cdot \varphi'_\varepsilon(h_K(u))\,d\mu(u)\\
&=&\int_{{\mathbb S}^{n-1}}(\alpha'(0)h_K(u)+h_C(u))\varphi'_\varepsilon(h_K(u))\,d\mu(u)\\
&=&\int_{{\mathbb S}^{n-1}}h_C(u)\varphi'_\varepsilon(h_K(u))\,d\mu(u)-\int_{{\mathbb S}^{n-1}}h_C\lambda_\varepsilon\,dS_K,
\end{eqnarray*}
and in turn we deduce (\ref{support}).

Since differences of support functions are dense among continuous functions on ${\mathbb S}^{n-1}$ (see {e.g.} \cite{SCH}), 
we have
$$
\int_{{\mathbb S}^{n-1}}g\lambda_\varepsilon\,dS_{K}=\int_{{\mathbb S}^{n-1}}g(u)\varphi'_\varepsilon(h_K(u))\,d\mu(u)
$$
for any continuous function $g$ on ${\mathbb S}^{n-1}$.
Therefore $\lambda_\varepsilon\,dS_{K}=\varphi'_\varepsilon\circ h_K\,d\mu$.
 Q.E.D.

\section{Proof of Theorem~\ref{theodens}}
\label{sectheodens}

We start recalling that, by Corollary~\ref{Kepsilon}, $K^\varepsilon\subset \sigma(K^\varepsilon)+R_0B^n$
where $\sigma(K^\varepsilon)$ is the centroid and $R_0>1$ depends on $n$, $p$, $\tau_1$ and $\tau_2$.
The following lemma is a simple consequence of Lemma~\ref{centroid} (iii) and $V(K^\varepsilon)=1$.

\begin{lemma}
\label{r0}
For $r_0=\frac1{(n+1)R_0^{n-1}\kappa_{n-1}}$, we have $\sigma(K^\varepsilon)+r_0B^n\subset K^\varepsilon$.
\end{lemma}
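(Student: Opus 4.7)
The plan is to apply Lemma~\ref{centroid}(iii) directly to $K^\varepsilon$. By Corollary~\ref{Kepsilon} we know that $K^\varepsilon \in \mathcal{K}_1$, so $V(K^\varepsilon) = 1$, and the circumradius with respect to the centroid satisfies $R(K^\varepsilon) \leq R_0$. Let $\varrho > 0$ denote the largest number such that $\sigma(K^\varepsilon) + \varrho B^n \subset K^\varepsilon$, and let $R \leq R_0$ be the smallest number such that $K^\varepsilon \subset \sigma(K^\varepsilon) + R B^n$.

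Then Lemma~\ref{centroid}(iii) yields
\[
1 = V(K^\varepsilon) \leq (n+1) \kappa_{n-1}\, \varrho\, R^{n-1} \leq (n+1)\kappa_{n-1}\, \varrho\, R_0^{n-1},
\]
where in the last step we used $R \leq R_0$ together with $R_0 \geq 1$ (from Lemma~\ref{Knotbounded01} and Lemma~\ref{Knotboundedn1}). Rearranging gives
\[
\varrho \geq \frac{1}{(n+1) R_0^{n-1} \kappa_{n-1}} = r_0,
\]
and therefore $\sigma(K^\varepsilon) + r_0 B^n \subset \sigma(K^\varepsilon) + \varrho B^n \subset K^\varepsilon$, as claimed.

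There is essentially no obstacle here: the statement is a one-line consequence of the slab-type volume bound in Lemma~\ref{centroid}(iii) combined with the volume normalization $V(K^\varepsilon)=1$ and the uniform diameter estimate $R(K^\varepsilon) \leq R_0$ established in Corollary~\ref{Kepsilon}. The only point to be careful about is to quote Lemma~\ref{centroid}(iii) with $R$ equal to the true circumradius about $\sigma(K^\varepsilon)$ and then replace it by $R_0$, which is legitimate because $R_0 \geq 1$ and the exponent $n-1$ is non-negative.
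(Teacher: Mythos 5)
Your proof is correct and is exactly the argument the paper intends; the paper merely states that the lemma is ``a simple consequence of Lemma~\ref{centroid}~(iii) and $V(K^\varepsilon)=1$'' and leaves the unwinding you wrote out implicit. One small point: you invoke $R_0\geq 1$ to justify replacing $R$ by $R_0$, but this is unnecessary --- $R\leq R_0$ together with the monotonicity of $t\mapsto t^{n-1}$ on $(0,\infty)$ already gives $R^{n-1}\leq R_0^{n-1}$.
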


Next we show that $\lambda_\varepsilon$ is bounded and bounded away from zero.

\begin{lemma}
\label{lambdaepsilonbound}
There exist $\tilde{\tau}_2>\tilde{\tau}_1>0$ depending on $n$, $p$, $\tau_1$ and $\tau_2$ such that
$\tilde{\tau}_1\leq\lambda_\varepsilon\leq\tilde{\tau}_2$ if $\varepsilon<\min\{\frac{r_0}6,\frac16\}$.
\end{lemma}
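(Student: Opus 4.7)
The plan is to combine three ingredients: (i) extremality of $K^\varepsilon$ in $\mathcal{K}_1$ bounds $\Phi_\varepsilon(K^\varepsilon,\xi(K^\varepsilon))$ above by the ball's energy $M$ from~(\ref{Phiball}); (ii) the Euler--Lagrange identity $\varphi'_\varepsilon(h_{K'})\,d\mu=\lambda_\varepsilon\,dS_{K^\varepsilon}$ from Proposition~\ref{Euler-Lagrange} (with $K':=K^\varepsilon-\xi(K^\varepsilon)$) can be tested against arbitrary continuous functions; and (iii) the sandwich $\sigma(K^\varepsilon)+r_0 B^n\subset K^\varepsilon\subset\sigma(K^\varepsilon)+R_0 B^n$ from Lemma~\ref{r0} and Corollary~\ref{Kepsilon} gives $h_{K^\varepsilon-\sigma(K^\varepsilon)}(u)\in[r_0,R_0]$. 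Since $\varepsilon<r_0/6$ forces $r_0>6\varepsilon$, we have $\varphi_\varepsilon=\varphi$ on the range of $h_{K^\varepsilon-\sigma(K^\varepsilon)}$, so $\Phi_\varepsilon(K^\varepsilon,\sigma(K^\varepsilon))$ is bounded by constants depending only on $n,p,\tau_1,\tau_2$. Because $\xi(K^\varepsilon)$ maximizes $\Phi_\varepsilon(K^\varepsilon,\cdot)$, we moreover have $\Phi_\varepsilon(K^\varepsilon,\xi(K^\varepsilon))\geq\Phi_\varepsilon(K^\varepsilon,\sigma(K^\varepsilon))$, so $\Phi_\varepsilon(K^\varepsilon,\xi(K^\varepsilon))$ is trapped between two universal constants.

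For the lower bound $\lambda_\varepsilon\geq\tilde\tau_1$ I would test the Euler--Lagrange identity against $g\equiv 1$: this gives $\lambda_\varepsilon\,S_{K^\varepsilon}({\mathbb S}^{n-1})=\int_{{\mathbb S}^{n-1}}\varphi'_\varepsilon(h_{K'})\,d\mu$. Since $h_{K'}(u)\leq\mathrm{diam}(K^\varepsilon)\leq 2R_0$, the monotonicity and positivity of $\varphi'_\varepsilon$ together with $\varphi'_\varepsilon(2R_0)=\varphi'(2R_0)>0$ (as $2R_0>3\varepsilon$) and $\mu\geq\tau_1\mathcal{H}^{n-1}$ yield $\int\varphi'_\varepsilon(h_{K'})\,d\mu\geq\tau_1\varphi'(2R_0)\,n\kappa_n$. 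Combining with the surface-area bound $S_{K^\varepsilon}({\mathbb S}^{n-1})\leq n\kappa_n R_0^{n-1}$ (from $K^\varepsilon\subset\sigma(K^\varepsilon)+R_0 B^n$ and monotonicity of surface area under convex inclusion) gives the desired positive constant lower bound on $\lambda_\varepsilon$.

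The upper bound $\lambda_\varepsilon\leq\tilde\tau_2$ is the delicate part, because $\varphi'_\varepsilon(h_{K'})$ can blow up as $h_{K'}\to 0$. The first sub-step is the quantitative inradius estimate $\rho_\varepsilon:=\mathrm{dist}(\xi(K^\varepsilon),\partial K^\varepsilon)\geq c\varepsilon$ for some $c=c(n,p,\tau_1,\tau_2)>0$: were $\rho_\varepsilon<\varepsilon/2$, I would choose $y\in\partial K^\varepsilon$ closest to $\xi(K^\varepsilon)$ with exterior unit normal $v$ and apply Lemma~\ref{oboundary} to $K_0:=K^\varepsilon-y$ with $\xi_0:=\xi(K^\varepsilon)-y$ (so $\|\xi_0\|=\rho_\varepsilon$) and $R=2R_0$; parts~(i)--(ii) combined force $\Phi_\varepsilon(K^\varepsilon,\xi(K^\varepsilon))\leq C-c'\log(\varepsilon/(2\rho_\varepsilon))$, and comparison with the lower bound $\Phi_\varepsilon(K^\varepsilon,\xi(K^\varepsilon))\geq\Phi_\varepsilon(K^\varepsilon,\sigma(K^\varepsilon))$ from paragraph one traps $\log(\varepsilon/\rho_\varepsilon)$. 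The second sub-step splits $n\lambda_\varepsilon=\int h_{K'}\,\varphi'_\varepsilon(h_{K'})\,d\mu$ across $\{h_{K'}\leq 3\varepsilon\}$ and its complement: on the first set the integrand is bounded by $q(c\varepsilon)^{-q}$, while the $2R_0$-Lipschitz property of $h_{K'}$ on ${\mathbb S}^{n-1}$ together with the Blaschke--Santal\'o-type bound $\int h_{K^\varepsilon-\sigma(K^\varepsilon)}^{-n}\,d\mathcal{H}^{n-1}\leq n\kappa_n^2$ from Lemma~\ref{centroid}~(ii) forces $\mathcal{H}^{n-1}(\{h_{K'}\leq 3\varepsilon\})=O(\varepsilon^{n-1})$, so the product is bounded uniformly when $q=n-1$ (covering $p\in(-(n-1),1)$); on the complement $\varphi_\varepsilon=\varphi$ and $t\varphi'(t)$ is directly bounded for $p\geq 0$ and integrable via a layer-cake estimate against the same measure bound for $p\in(-(n-1),0)$, since $(n-1)/|p|>1$. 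The remaining range $p\in(-n,-(n-1)]$, where $\varphi_\varepsilon\equiv\varphi$ by construction, admits a direct argument: the inequality $\Phi_\varepsilon(K^\varepsilon,\xi(K^\varepsilon))\geq\Phi_\varepsilon(K^\varepsilon,\sigma(K^\varepsilon))$ reads $\int h_{K'}^{p}\,d\mu\leq\tau_2 r_0^{p}\,n\kappa_n$, yielding $\lambda_\varepsilon=(|p|/n)\int h_{K'}^{p}\,d\mu\leq\tilde\tau_2$ at once. The main obstacle is the inradius step: Lemma~\ref{oboundary} only delivers $\rho_\varepsilon\gtrsim\varepsilon$ rather than a fixed positive constant, so the argument must rely essentially on the numerology $\varepsilon^{n-1}\cdot\varepsilon^{-(n-1)}=O(1)$ that holds precisely when $q=n-1$.
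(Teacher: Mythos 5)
Your lower bound $\lambda_\varepsilon\geq\tilde\tau_1$ is fine and takes a slightly different route from the paper: you test the Euler--Lagrange identity against $g\equiv 1$ and combine $\varphi'_\varepsilon\geq\varphi'(2R_0)$ with $S(K^\varepsilon)\leq n\kappa_n R_0^{n-1}$, whereas the paper simply lower-bounds $\int h_{K'}\varphi'_\varepsilon(h_{K'})\,d\mu$ directly over a half-cap around $\sigma(K^\varepsilon)$. Both work.

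The upper bound, however, has a genuine gap. Your strategy hinges on the claim $\mathcal{H}^{n-1}(\{h_{K'}\leq 3\varepsilon\})=O(\varepsilon^{n-1})$, but the two ingredients you invoke do not produce it. Lemma~\ref{centroid}(ii) bounds $\int h_{K^\varepsilon-\sigma(K^\varepsilon)}^{-n}\,d\mathcal{H}^{n-1}$, i.e., the polar volume with respect to the \emph{centroid}, not with respect to $\xi(K^\varepsilon)$; since $h_{K^\varepsilon-\sigma(K^\varepsilon)}\geq r_0$ always, that integral is $\leq r_0^{-n}n\kappa_n$ anyway and carries no information about small sublevel sets of $h_{K'}=h_{K^\varepsilon-\xi(K^\varepsilon)}$. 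There is no a priori volume bound for the polar of $K^\varepsilon$ taken about the arbitrary interior point $\xi(K^\varepsilon)$, which is precisely why the paper does not attempt such an estimate. The Lipschitz property of $h_{K'}$ also does not rescue the argument: it only controls how fast $h_{K'}$ can increase away from a near-minimum, not how much of the sphere the sublevel set occupies. Moreover, even if you controlled $\mathcal{H}^{n-1}(\{h_{K'}\leq\varepsilon\})$ by $O(\varepsilon^{n-1})$ via the energy bound (which is possible, since $\varphi_\varepsilon\leq -\varepsilon^{-q}$ there), the transition strip $\{\varepsilon<h_{K'}\leq 3\varepsilon\}$ is not controlled: on it $\varphi'_\varepsilon$ can be as large as $q\varepsilon^{-q-1}$ while $\varphi_\varepsilon$ need not be very negative, so neither the energy bound nor concavity gives a measure estimate there, and $\int_{\{\varepsilon<h_{K'}\leq 3\varepsilon\}}h_{K'}\varphi'_\varepsilon(h_{K'})\,d\mu$ is a priori of order $\varepsilon^{-q}\cdot\mathcal{H}^{n-1}(\cdot)$ with no cancellation.

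The paper sidesteps all of this by never attempting a measure bound on the bad set. With $\xi(K^\varepsilon)=o$, $\sigma(K^\varepsilon)=\varrho w$, it observes that on the ``good'' region $\{\langle u,w\rangle\geq -r_0/(2R_0)\}$ one has $h_{K^\varepsilon}(u)\geq r_0/2>3\varepsilon$ and hence $\varphi'_\varepsilon\circ h_{K^\varepsilon}\leq\varphi'(r_0/2)$. To control the remaining region, it invokes the first-order optimality condition of Corollary~\ref{intcond}, $\int_{{\mathbb S}^{n-1}}u\,\varphi'_\varepsilon(h_{K^\varepsilon}(u))\,d\mu=o$: pairing with $-w$ transfers the bound from the good region to the bad one and yields $\int_{\{\langle u,w\rangle<-r_0/(2R_0)\}}\varphi'_\varepsilon\circ h_{K^\varepsilon}\,d\mu\leq\frac{2R_0}{r_0}\varphi'(r_0/2)\tau_2 n\kappa_n$ with no measure estimate needed. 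This use of the vector-valued stationarity condition for $\xi(K^\varepsilon)$ is the key idea your proposal is missing; without it, the upper bound does not close. (Your fallback for $p\in(-n,-(n-1)]$, where $\varphi_\varepsilon=\varphi$ and $\lambda_\varepsilon=\frac{|p|}{n}\int h_{K'}^p\,d\mu$, is correct, but it relies on the exact identity $t\varphi'(t)=-|p|\varphi(t)$, which fails on $(\varepsilon,3\varepsilon)$ when $q=n-1>|p|$, so it does not extend to $p\in(-(n-1),1)$.)
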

\proof We assume $\xi(K^\varepsilon)=o$.
To simplify the notation, we set $K=K^\varepsilon$ and $\sigma=\sigma(K)$. Let
$w\in {\mathbb S}^{n-1}$ and $\varrho\geq 0$ be such that $\sigma=\varrho w$. Since 
$r_0w\in K$, if $u\in {\mathbb S}^{n-1}$ and $\langle u,w\rangle\geq \frac12$, then
$h_K(u)\geq r_0/2$. Moreover, since $\varphi'_\varepsilon$ is monotone decreasing, we have
$\varphi'_\varepsilon(h_K(u))\geq \varphi'_\varepsilon(2R_0)=\varphi'(2R_0)$ for all $u\in {\mathbb S}^{n-1}$, and hence
(\ref{caparea}) yields
$$
\int_{{\mathbb S}^{n-1}}h_{K}(u)\cdot
\varphi'_\varepsilon(h_{K}(u))\,d\mu(u)\geq 
\int_{u\in {\mathbb S}^{n-1}\atop \langle u,w\rangle\geq \frac12}(r_0/2)\cdot
\varphi'(2R_0)\,d\mu(u)\geq (r_0/2)\cdot
\varphi'(2R_0)\tau_1 \cdot (\sqrt{3}/2)^{n-1}\kappa_{n-1},
$$
which in turn yields the required lower bound on $\lambda_\varepsilon$.

To have a suitable upper bound on $\lambda_\varepsilon$, the key observation is that
using $\varrho\leq R_0$, we deduce that
if $u\in {\mathbb S}^{n-1}$ with
$\langle u,w\rangle\geq -\frac{r_0}{2R_0}$ and $\varepsilon<\frac{r_0}6$ then
$$
h_K(u)\geq \langle u,\varrho w+r_0u\rangle\geq r_0-\frac{r_0\varrho}{2R_0}\geq r_0/2,
$$
therefore
\begin{equation}
\label{lambdaepsupperr}
\varphi'_\varepsilon(h_{K}(u))\leq \varphi'_\varepsilon(r_0/2)=\varphi'(r_0/2).
\end{equation}
Another observation is that $K\subset 2R_0B^n$ implies
\begin{equation}
\label{lambdaepsupperR}
h_K(u)<2R_0\mbox{ \ for any $u\in S^{n-1}$.}
\end{equation}
It follows directly from (\ref{lambdaepsupperr}) and (\ref{lambdaepsupperR}) that
\begin{equation}
\label{lambdaepsupper-}
\int_{u\in {\mathbb S}^{n-1}\atop \langle u,w\rangle\geq \frac{-r_0}{2R_0}}
 h_K(u)\varphi'_\varepsilon(h_{K}(u))\,d\mu(u)\leq  (2R_0)\varphi'(r_0/2) \tau_2n\kappa_n.
\end{equation}

However, if $\langle u,w\rangle< \frac{-r_0}{2R_0}$ for $u\in S^{n-1}$,  then 
$\varphi'_\varepsilon(h_{K^\varepsilon}(u))$ can be arbitrary large as $\xi(K^\varepsilon)$ can be arbitrary close to $\partial K^\varepsilon$ if $\varepsilon>0$ is small, and 
hence we transfer the problem to the case $\langle u,w\rangle\geq  \frac{-r_0}{2R_0}$ using
Corollary~\ref{intcond}. 
First we claim that
\begin{equation}
\label{lambdaepsupper}
\int_{u\in {\mathbb S}^{n-1}\atop \langle u,w\rangle< \frac{-r_0}{2R_0}}
\varphi'_\varepsilon(h_{K}(u))\,d\mu(u)\leq \frac{2R_0}{r_0}\cdot
\varphi'(r_0/2)\tau_2n\kappa_n.
\end{equation}
On the one hand, first applying Corollary~\ref{intcond}, and after that
$\mu({\mathbb S}^{n-1})\leq \tau_2n\kappa_n$
  and (\ref{lambdaepsupperr}) imply
$$
\int_{u\in {\mathbb S}^{n-1}\atop \langle u,w\rangle< \frac{-r_0}{2R_0}}
\langle u,-w\rangle \varphi'_\varepsilon(h_{K}(u))\,d\mu(u)=
\int_{u\in {\mathbb S}^{n-1}\atop \langle u,w\rangle\geq \frac{-r_0}{2R_0}}
\langle u,w\rangle \varphi'_\varepsilon(h_{K}(u))\,d\mu(u)\leq 
\varphi'(r_0/2)\tau_2n\kappa_n.
$$
On the other hand,
as $ \langle u,w\rangle< \frac{-r_0}{2R_0}$ is equivalent to 
$\langle u,-w\rangle> \frac{r_0}{2R_0}$,
we have
$$
\int_{u\in {\mathbb S}^{n-1}\atop \langle u,w\rangle< \frac{-r_0}{2R_0}}
\langle u,-w\rangle \varphi'_\varepsilon(h_{K}(u))\,d\mu(u)
\geq \frac{r_0}{2R_0}\int_{u\in {\mathbb S}^{n-1}\atop \langle u,w\rangle< \frac{-r_0}{2R_0}}
\varphi'_\varepsilon(h_{K}(u))\,d\mu(u),
$$
and in turn deduce (\ref{lambdaepsupper}).

Now (\ref{lambdaepsupperR}) and (\ref{lambdaepsupper}) yield
$$
\int_{u\in {\mathbb S}^{n-1}\atop \langle u,w\rangle< \frac{-r_0}{2R_0}}
h_K(u)\varphi'_\varepsilon(h_{K}(u))\,d\mu(u)\leq \frac{(2R_0)^2}{r_0}\cdot
\varphi'(r_0/2)\tau_2n\kappa_n,
$$
which estimate combined with (\ref{lambdaepsupper-}) leads to
$\lambda_\varepsilon< \frac{(2R_0)^2+2R_0}{r_0}\,\varphi'(r_0/2)\tau_2n\kappa_n$.
In turn, we conclude Lemma~\ref{lambdaepsilonbound}. Q.E.D. \\

\noindent{\it Proof of Theorem~\ref{theodens} } 
We assume that $\xi(K^\varepsilon)=o$ for all $\varepsilon\in(0,\min\{\frac16,\frac{r_0}6\})$.
 It follows from Lemma~\ref{Euler-Lagrange} that
\begin{equation}
\label{Euler-Lagrange0}
\varphi'_\varepsilon(h_{K^\varepsilon}(u))\,d\mu(u)=
\lambda_\varepsilon\,dS_{K^\varepsilon}
\end{equation}
as measures on ${\mathbb S}^{n-1}$.

Using the constants $r_0,R_0$ of Lemma~\ref{r0}, 
if $\varepsilon$ is small then $ K^\varepsilon\subset 2R_0B^n$ and 
$K^\varepsilon$ contains a ball of radius $r_0$. According to the Blaschke selection Theorem
and Lemma~\ref{lambdaepsilonbound}, there exists a sequence $\{\varepsilon_m\}$ tending to zero, $\varepsilon_m>0$, such that $K^{\varepsilon_m}$ tends to a convex body $K_0$, and 
$\lim_{m\to\infty}\lambda_{\varepsilon_m}=\lambda_0>0$. In particular, the surface area measure of $K^{\varepsilon_n}$
tends weakly to $S_{K_0}$, and we may assume that
\begin{equation}
\label{surface-area-estimate}
\lambda_{\varepsilon_m}S(K^{\varepsilon_m})\leq (\lambda_0+1)S(K)
\end{equation}
for all $m$. Here, for a convex body $K$, $S(K)$ denotes its surface area: $S(K)=S_K(\sfe)$.

We claim that the closed set $X=\{u\in {\mathbb S}^{n-1}:\,h_{K_0}(u)=0\}$ satisfies
\begin{equation}
\label{Xmeasure0}
\mu(X)=0.
\end{equation}
We may assume that $X\neq \emptyset$.
It follows from (\ref{phider}) that: setting $c=|p|$ if $p\in(-n,1)\backslash\{0\}$ and $c=1$ if $p=0$, we have
$$
\varphi'(t)= c\,t^{p-1}\mbox{ \ if $t\in(0,1)$.}
$$
Let $\tau\in (0,1)$. We can choose $m$ sufficiently large such that $3\varepsilon_m<\tau$
and $|h_{K^{\varepsilon_m}}(u)-h_{K_{0}}(u)|<\tau$ for $u\in {\mathbb S}^{n-1}$; thus,
if $0<t<\tau$, then
$$
\varphi'_{\varepsilon_m}(t)\geq \varphi'_{\varepsilon_m}(\tau)=\varphi'(\tau)=c\,\tau^{p-1}.
$$
In particular, 
$\varphi'_{\varepsilon_m}(h_{K^{\varepsilon_m}}(u))\geq c\,\tau^{p-1}$ holds for $u\in X$. It follows from
(\ref{Euler-Lagrange0}) and (\ref{surface-area-estimate}) that
$$
\mu(X)\leq \frac{(\lambda_0+1)S(K)}{c\,\tau^{p-1}}=\frac{(\lambda_0+1)S(K)}{c}\cdot \tau^{1-p}
$$
holds for any $\tau\in (0,1)$, and in turn we conclude (\ref{Xmeasure0}) as $1-p>0$.

Next, for $\delta\in(0,1)$, we define the closed set
$$
\Xi_\delta=\{u\in {\mathbb S}^{n-1}:\,h_{K_0}(u)\geq \delta\},
$$
so that ${\mathbb S}^{n-1}\backslash X=\cup_{\delta\in(0,1)}\Xi_\delta$.
For large $m$, we have $\varphi'_{\varepsilon_m}\circ h_{K^{\varepsilon_m}}=\varphi'\circ h_{K^{\varepsilon_m}}$
on $\Xi_\delta$, and the latter sequence tends uniformly to $\varphi'\circ h_{K_0}$ on $\Xi_\delta$. Therefore, if 
$g:\,{\mathbb S}^{n-1}\to\R$ is a continuous function, then (\ref{Euler-Lagrange0}) and the convergence of 
$K_{\varepsilon_m}$ to $K_0$ imply
$$
\int_{\Xi_\delta}g(u)\varphi'(h_{K_0}(u))\,d\mu(u)=\lambda_0\int_{\Xi_\delta}g(u)\,dS_{K_0}(u).
$$
We define
$$
\lambda=
\left\{\begin{array}{rl}
(\lambda_0/|p|)^{\frac1{n-p}}&\mbox{ if $p\in(-n,1)\backslash\{0\}$,}\\
\lambda_0^{\frac1{n-p}}&\mbox{ if $p=0$,}
\end{array}\right.
$$
and hence (\ref{phider})  yields
\begin{equation}
\label{gintphi}
\int_{\Xi_\delta}g(u)h_{K_0}(u)^{p-1}\,d\mu(u)=\lambda^{n-p}\int_{\Xi_\delta}g(u)\,dS_{K_0}(u).
\end{equation}
For any continuous $\psi:\,{\mathbb S}^{n-1}\to\R$,  $\psi(u)/h_{K_0}(u)^{p-1}$ is a continuous function on $\Xi_\delta$ that can be extended to a continuous function on ${\mathbb S}^{n-1}$. Using this function in place of $g$ in (\ref{gintphi}), we deduce that
$$
\int_{\Xi_\delta}\psi(u)\,d\mu(u)=\lambda^{n-p}\int_{\Xi_\delta}\psi(u)h_{K_0}(u)^{1-p}\,dS_{K_0}(u).
$$
As this holds for all $\delta\in(0,1)$, it follows that
\begin{equation}
\label{psiint}
\int_{{\mathbb S}^{n-1}\backslash X}\psi(u)\,d\mu(u)=\int_{{\mathbb S}^{n-1}\backslash X}\psi(u)h_{\lambda K_0}(u)^{1-p}\,dS_{\lambda K_0}(u).
\end{equation}
Combining (\ref{Xmeasure0}) and  (\ref{psiint}) implies that
$$
\int_{{\mathbb S}^{n-1}}\psi(u)\,d\mu(u)=\int_{\Xi_\delta}\psi(u)h_{\lambda K_0}(u)^{1-p}\,dS_{\lambda K_0}(u),
$$
for any continuous function $\psi:\,{\mathbb S}^{n-1}\to\R$, and hence $d\mu=h_{M}(u)^{1-p}\,dS_{M}(u)$
for $M=\lambda K_0$.  \ Q.E.D.\\

We still need to address the case when $\mu$ is invariant under certain closed subgroup $G$ of $O(n)$. Here the main additional difficulty is that 
we always have to deform the involved bodies in a $G$-invariant way.

\begin{prop}
\label{theodensG}
If $-n<p<1$ and the Borel measure $\mu$  satisfies
$d\mu =f\,d\mathcal{H}^{n-1}$ where $f$ is bounded, $\inf_{u\in {\mathbb S}^{n-1}}f(u)>0$ and
$f$ is invariant under the closed subgroup $G$ of $O(n)$, then there exists $M\in\mathcal{K}_0^n$ invariant under $G$ such that
$\mu=S_{M,p}$.
\end{prop}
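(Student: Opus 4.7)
\proof[Proof plan for Proposition~\ref{theodensG}]
The plan is to rerun the entire argument of Sections~\ref{secenergy}--\ref{sectheodens} inside the $G$-invariant subclass
$$
\mathcal{K}_1^G=\{K\in\mathcal{K}_1:\,gK=K\mbox{ for every }g\in G\},
$$
which is non-empty (it contains $\kappa_n^{-1/n}B^n$) and closed under Hausdorff limits because $G$ is compact. The first preliminary observation is that if $K\in\mathcal{K}_1^G$, then by the $G$-invariance of $\mu$ we have $\Phi_\varepsilon(K,g\xi)=\Phi_\varepsilon(K,\xi)$ for every $g\in G$ and $\xi\in\inte\,K$; hence the uniqueness statement of Proposition~\ref{xiinside} forces $\xi(K)\in\mbox{Fix}(G)$. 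Translating by $-\xi(K)$ therefore preserves $G$-invariance, so we may still assume $\xi(K^\varepsilon)=o$ at every stage.

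Next I would repeat the existence argument for the extremal body. Lemmas~\ref{Knotbounded01} and~\ref{Knotboundedn1} hold for any $K\in\mathcal{K}_1$ and in particular for $K\in\mathcal{K}_1^G$, so $R(K)$ is uniformly bounded on an almost-minimising sequence in $\mathcal{K}_1^G$; Blaschke selection together with Lemma~\ref{xiKcontinuous} gives a minimiser $K^\varepsilon\in\mathcal{K}_1^G$ of $\Phi_\varepsilon(\cdot,\xi(\cdot))$ over $\mathcal{K}_1^G$. To transfer the quasi-smoothness argument (Section~\ref{secquasi-smooth}) it suffices to observe that the ``bad'' set $\sfe\setminus\nu_{K^\varepsilon}(\d'K^\varepsilon)$ is itself $G$-invariant, so if it has positive $\mathcal{H}^{n-1}$-measure we may pick a closed $\omega_0$ of positive measure inside it and replace it with the still closed, still $\mu$-null-surface set $\omega=G\cdot\omega_0$. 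With this $G$-invariant $\omega$, the Wulff-shape $K_t$ in Proposition~\ref{improve} stays $G$-invariant and its volume-normalization stays in $\mathcal{K}_1^G$, contradicting extremality in $\mathcal{K}_1^G$.

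For the variational formula of Section~\ref{secEuler-Lagrange} I would carry out the proof of Proposition~\ref{Euler-Lagrange} using only $G$-invariant reference bodies $C$ with $o\in\inte\,C$: the deformed bodies $K_t$ and $\widetilde{K}_t$ then remain in $\mathcal{K}_1^G$, and the computation yields
$$
\int_\sfe h_C\,\lambda_\varepsilon\,dS_{K^\varepsilon}=\int_\sfe h_C(u)\,\varphi'_\varepsilon(h_{K^\varepsilon}(u))\,d\mu(u)
\quad\mbox{for every $G$-invariant $C$.}
$$
The main technical point is then the following density statement: differences of support functions of $G$-invariant convex bodies containing $o$ in the interior are dense, in the sup norm, in the space of $G$-invariant continuous functions on $\sfe$. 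This follows from the classical density of differences of support functions in $C(\sfe)$ by averaging the approximants over the Haar measure of the compact group $G$, using that $g\mapsto h_{gK}$ integrates to the support function of $\int_G gK\,dg$, a $G$-invariant convex body. Since $\mu$ and $S_{K^\varepsilon}$ are both $G$-invariant Borel measures, agreement on $G$-invariant continuous test functions implies agreement as measures, giving the Euler-Lagrange identity
$$
\varphi'_\varepsilon(h_{K^\varepsilon})\,d\mu=\lambda_\varepsilon\,dS_{K^\varepsilon}.
$$

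The final step is to send $\varepsilon\to 0$ as in Section~\ref{sectheodens}. The uniform estimates on $R(K^\varepsilon)$, on the inradius (Lemma~\ref{r0}), and on $\lambda_\varepsilon$ (Lemma~\ref{lambdaepsilonbound}) are unchanged. Blaschke selection yields $K^{\varepsilon_m}\to K_0$ and $\lambda_{\varepsilon_m}\to\lambda_0>0$; since each $K^{\varepsilon_m}$ is $G$-invariant, so is $K_0$, hence so is $M=\lambda K_0$. The measure-theoretic argument establishing $\mu=h_M^{1-p}\,dS_M=S_{M,p}$ is identical to the one used for Theorem~\ref{theodens}, the only new ingredient being $G$-invariance of $M$, which has already been secured. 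The step I expect to be most delicate is the density lemma for $G$-invariant support functions, since one must verify that the averaged body is indeed a convex body with $o$ in its interior and that averaging does not destroy the uniform approximation; the remaining adaptations are essentially bookkeeping on top of the proof of Theorem~\ref{theodens}. \ Q.E.D.
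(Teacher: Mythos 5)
Your proposal is correct, and its overall architecture (restrict to $\mathcal{K}_1^G$, observe $\xi(K)\in\mathrm{Fix}(G)$ by uniqueness, run the minimisation and $G$-orbit quasi-smoothness argument, prove the variational identity against $G$-invariant reference bodies, pass to the limit) matches the paper's essentially step for step. The one place you diverge is the final step of the Euler--Lagrange identity. Having established
$$
\int_{\sfe} h_{C_0}\,\lambda_\varepsilon\,dS_{K^\varepsilon}=\int_{\sfe} h_{C_0}\,\varphi'_\varepsilon(h_{K^\varepsilon})\,d\mu
$$
only for $G$-invariant $C_0$, the paper immediately \emph{extends} this identity to an arbitrary convex body $C$ by setting $h_{C_0}=\int_G h_{AC}\,d\vartheta_G(A)$ and exploiting the $G$-invariance of $K^\varepsilon$ and $\mu$ together with Fubini to unwind the average; after that it simply quotes the classical density of differences of support functions in $C(\sfe)$ with no further work. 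You instead stay inside the $G$-invariant world: you prove a $G$-equivariant density lemma (differences of support functions of $G$-invariant bodies are dense in the $G$-invariant continuous functions) by averaging the classical approximants over the Haar measure, and you then add the observation that two $G$-invariant measures agreeing on $G$-invariant test functions agree on all of $C(\sfe)$, again by Haar averaging plus Fubini. Both routes use precisely the same two ingredients (Minkowski integration over $G$, Fubini) but in a different order; the paper's route is a bit slicker because it offloads the density step to a standard fact, whereas yours makes explicit the $G$-invariant density lemma you flag as the delicate point. Either way the argument closes correctly, and your handling of the minor points (that $\int_G \gamma^{-1}A\,d\vartheta_G(\gamma)$ is a $G$-invariant body containing the same ball about $o$, and that Haar averaging is a sup-norm contraction) is accurate.
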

 
To indicate the proof of  Proposition~\ref{theodensG}, we only sketch the necessary changes in the argument leading to Theorem~\ref{theodens}.

In this case, we consider the family $\mathcal{K}^G_1$ of convex bodies $K\in\mathcal{K}_1$ satisfying 
$AK=K$ for any $A\in G$. It follows from the uniqueness of $\xi(K)$ (see Proposition~\ref{xiinside}) that if $K\in\mathcal{K}^G_1$ and $A\in G$, then $A\xi(K)=\xi(K)$. 

The argument for Corollary~\ref{Kepsilon} carries over to yield  the following analogue statement.
For the $R_0>0$ depending on $n$, $p$, $\tau_1$ and $\tau_2$ of Lemma~\ref{Knotbounded01} and
Lemma~\ref{Knotboundedn1}, there exists
 $K^\varepsilon\in \mathcal{K}_1^G$  with $R(K^\varepsilon)\leq R_0$ for any $\varepsilon\in(0,\frac16)$ such that
$$
\Phi_\varepsilon(K^\varepsilon,\xi(K^\varepsilon))=\min_{K\in \mathcal{K}_1^G}\Phi_\varepsilon(K,\xi(K)).
$$

Let us discuss how to prove a $G$ invariant version of Corollary~\ref{quasi-smooth}; namely, that
$K^\varepsilon$ is quasi-smooth. In this case, a more subtle modification is needed.

\begin{lemma}
\label{quasi-smoothG}
$K^\varepsilon\in \mathcal{K}_1^G$ is quasi-smooth.
\end{lemma}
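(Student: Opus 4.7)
The plan is to mimic the proof of Corollary~\ref{quasi-smooth} by producing a suitable closed set $\omega$ on which to shrink $K^\varepsilon$ via a Wulff-type inward deformation, but arranging that the deformation stays inside $\mathcal{K}_1^G$. The only new issue is that the set $\omega$ appearing in Proposition~\ref{improve} must be replaced by a $G$-invariant set so that the resulting Wulff shape is still $G$-invariant, and this replacement must not destroy the two properties used in the proof of Proposition~\ref{improve}, namely that $\mathcal{H}^{n-1}(\omega)>0$ and $S_{K^\varepsilon}(\omega)=0$.

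First I would argue that, if $K\in\mathcal{K}_1^G$, then its smooth and singular boundary points and the associated outer normals are $G$-invariant. Indeed, for any $A\in G$ we have $AK=K$, and $A$ is an isometry, so $A(\partial' K)=\partial' K$ and $A(\Xi_K)=\Xi_K$; moreover $\nu_K(Ax)=A\nu_K(x)$ for each $x\in\partial K$. Hence the set $\Sigma_K:={\mathbb S}^{n-1}\setminus\nu_K(\partial' K)$ is $G$-invariant, and $S_K$ is a $G$-invariant measure on ${\mathbb S}^{n-1}$.

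Now suppose, for contradiction, that $K^\varepsilon$ is not quasi-smooth, so $\mathcal{H}^{n-1}(\Sigma_{K^\varepsilon})>0$. Pick any closed $\omega_0\subset\Sigma_{K^\varepsilon}$ with $\mathcal{H}^{n-1}(\omega_0)>0$ and set $\omega:=G\omega_0=\{Au:A\in G,\,u\in\omega_0\}$. Since $G$ is a closed subgroup of $O(n)$ and $\omega_0$ is compact, $\omega$ is closed; since $\omega\supset \omega_0$, we have $\mathcal{H}^{n-1}(\omega)>0$; and since $\Sigma_{K^\varepsilon}$ is $G$-invariant, $\omega\subset\Sigma_{K^\varepsilon}$. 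As in the proof of Corollary~\ref{quasi-smooth}, if an exterior normal at $x\in\partial K^\varepsilon$ lies in $\omega\subset\Sigma_{K^\varepsilon}$, then $x\in\Xi_{K^\varepsilon}$, and hence $S_{K^\varepsilon}(\omega)\leq\mathcal{H}^{n-1}(\Xi_{K^\varepsilon})=0$.

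With this $G$-invariant $\omega$, I would re-run the argument of Proposition~\ref{improve} verbatim: define
$$
K_t=\{x\in K^\varepsilon:\langle x,u\rangle\leq h_{K^\varepsilon}(u)-t \text{ for all } u\in\omega\},\qquad \widetilde{K}_t=V(K_t)^{-1/n}K_t.
$$
Because $\omega$ is $G$-invariant and $K^\varepsilon\in\mathcal{K}_1^G$, each $K_t$ is $G$-invariant, and so $\widetilde{K}_t\in\mathcal{K}_1^G$ for small $t\geq 0$. The proof of Proposition~\ref{improve} then shows that $\Phi_\varepsilon(\widetilde{K}_t,\xi(\widetilde{K}_t))<\Phi_\varepsilon(K^\varepsilon,\xi(K^\varepsilon))$ for small $t>0$, contradicting the extremality of $K^\varepsilon$ in $\mathcal{K}_1^G$.

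The main obstacle I anticipate is the verification that $G\omega_0$ is still closed and still contained in $\Sigma_{K^\varepsilon}$; both reduce to the fact that $G$ acts as a group of isometries and $K^\varepsilon$ is $G$-invariant, but this is where one uses in an essential way the $G$-equivariance of the Gauss map. Everything else (the volume estimate $\alpha'(0)=0$, the existence of the one-sided derivative of $h_{K_t}$ via Lemma~\ref{Wulffvariation-inwards}, and the differentiation of $\Phi_\varepsilon$ via Corollary~\ref{center-irrelevant} together with Lemma~\ref{xider}) is identical to the non-equivariant case.
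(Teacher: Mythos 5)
Your proof is correct and follows essentially the same approach as the paper: choose a closed $\omega_0\subset\mathbb{S}^{n-1}\setminus\nu_{K^\varepsilon}(\partial' K^\varepsilon)$ with positive $\mathcal{H}^{n-1}$-measure, replace it by its $G$-orbit $\omega=G\omega_0$ (closed by compactness of $G$ and $\omega_0$, still of positive measure, still avoiding $\nu_{K^\varepsilon}(\partial' K^\varepsilon)$ by equivariance of the Gauss map, hence with $S_{K^\varepsilon}(\omega)=0$), and then observe that the deformation $K_t$ in Proposition~\ref{improve} is $G$-invariant so it yields a competitor in $\mathcal{K}_1^G$, contradicting extremality. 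The only cosmetic difference is that you spell out the $G$-equivariance of $\nu_K$ and the $G$-invariance of $\Xi_K$ and $\partial' K$ explicitly, whereas the paper states these facts more tersely.
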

\proof We suppose that $K=K^\varepsilon\in\mathcal{K}_1^G$ is not quasi-smooth, and seek a contradiction.
We have $\mathcal{H}^{n-1}({\mathbb S}^{n-1}\backslash\nu_K(\partial' K))>0$, therefore there exists a
closed set $\tilde{\omega}\subset {\mathbb S}^{n-1}\backslash\nu_K(\partial' K)$ with $\mathcal{H}^{n-1}(\tilde{\omega})>0$.
We define
$$
\omega=\cup_{A\in G}A\tilde{\omega},
$$
which is compact as both $G$ and $\tilde{\omega}$ are compact. Readily, $\mathcal{H}^{n-1}(\tilde{\omega})>0$
and $\omega$ is $G$ invariant.
Since $K$ is $G$ invariant, we deduce that even
$\omega\subset {\mathbb S}^{n-1}\backslash\nu_K(\partial' K)$, and hence
 $S_K(\omega)=0$.
Thus  we can apply Lemma~\ref{improve}. We observe that  the set $K_t$ defined in Lemma~\ref{improve} is
now $G$ invariant,  and hence there exists
a  convex body $\widetilde{K}\in\mathcal{K}_1^G$ such that
$\Phi_\varepsilon(\widetilde{K},\xi(\widetilde{K}))<\Phi_\varepsilon(K,\xi(K))$. This contradiction with the extremality of $K=K^\varepsilon$ proves Lemma~\ref{quasi-smoothG}. Q.E.D. \\

Let us turn to the $G$-invariant version of Proposition~\ref{Euler-Lagrange}.

\begin{prop}
\label{Euler-LagrangeG}
$\varphi'_\varepsilon(h_{K^\varepsilon}(u)-\langle\xi(K^\varepsilon),u\rangle)\,d\mu(u)=
\lambda_\varepsilon\,dS_{K^\varepsilon}$ as measures on ${\mathbb S}^{n-1}$.
\end{prop}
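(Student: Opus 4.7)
The plan is to mimic the proof of Proposition~\ref{Euler-Lagrange}, but restricting all Wulff-shape perturbations of $K^\varepsilon$ to stay inside $\mathcal{K}_1^G$, and then recovering the full measure identity from its restriction to $G$-invariant test data. As before, write $K=K^\varepsilon$, assume $\xi(K)=o$, and note that by Lemma~\ref{quasi-smoothG} the extremal body $K$ is now quasi-smooth while remaining $G$-invariant.

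First, I would establish the analogue of formula~(\ref{support}) for every $G$-invariant convex body $C$ with $o\in\inte C$. Given such a $C$, pick $r>0$ with $rC\subset K$ and define the Wulff shape $K_t$ with support function $h_K+th_C$ for $|t|<r$; since both $h_K$ and $h_C$ are $G$-invariant, so is $K_t$, and hence $\widetilde K_t=V(K_t)^{-1/n}K_t\in\mathcal{K}_1^G$. The extremality of $K^\varepsilon$ in $\mathcal{K}_1^G$ makes $t=0$ a critical point of $t\mapsto \Phi_\varepsilon(\widetilde K_t,\xi(\widetilde K_t))$, and the computation carried out in the original proof (quasi-smoothness $+$ Lemma~\ref{Wulffvariation} $+$ Lemma~\ref{Alexandrov} $+$ Corollary~\ref{center-irrelevant}) goes through verbatim to give
\begin{equation*}
\int_{\sfe}h_C(u)\,\varphi'_\varepsilon(h_K(u))\,d\mu(u)=\int_{\sfe}h_C(u)\,\lambda_\varepsilon\,dS_K(u).
\end{equation*}

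Next I would upgrade this equality from $G$-invariant support functions to all $G$-invariant continuous functions on $\sfe$. The standard density of differences of support functions in $C(\sfe)$ combined with a $G$-averaging operator $g\mapsto \bar g$, where $\bar g(u)=\int_G g(Au)\,d\nu(A)$ for a Haar probability measure $\nu$ on the compact group $G$, does the job: any $G$-invariant continuous $g$ equals $\bar g_1-\bar g_2$ with $\bar g_i$ averages of support functions of bodies containing $o$ in the interior, and averaging such support functions over $G$ produces support functions of $G$-invariant bodies (still containing $o$ in the interior). Hence both measures $\varphi'_\varepsilon(h_K)\,d\mu$ and $\lambda_\varepsilon\,dS_K$ integrate any $G$-invariant $g\in C(\sfe)$ to the same value.

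Finally, to pass from $G$-invariant test functions to arbitrary ones, observe that both measures are $G$-invariant: $d\mu=f\,d\mathcal H^{n-1}$ with $f$ $G$-invariant, while $K$ and hence $S_K$ and $h_K$ are $G$-invariant as well. Thus for any $g\in C(\sfe)$, applying the previous step to the $G$-average $\bar g$ (which is still continuous) together with the $G$-invariance of each side gives
\begin{equation*}
\int_{\sfe}g\,\varphi'_\varepsilon(h_K)\,d\mu=\int_{\sfe}\bar g\,\varphi'_\varepsilon(h_K)\,d\mu=\int_{\sfe}\bar g\,\lambda_\varepsilon\,dS_K=\int_{\sfe}g\,\lambda_\varepsilon\,dS_K,
\end{equation*}
whence the claimed identity of measures. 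The main obstacle is the density step: one must verify carefully that $G$-averaging genuinely sends support functions of bodies with $o\in\inte$ to support functions of $G$-invariant bodies with $o\in\inte$, and that the resulting $G$-invariant differences of support functions are dense in the $G$-invariant subspace of $C(\sfe)$; after that, the variational computation is exactly the one already executed for Proposition~\ref{Euler-Lagrange}.
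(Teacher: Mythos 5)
Your proposal is correct and uses essentially the same strategy as the paper: derive the variational identity for $G$-invariant Wulff perturbations (so that the competitors stay in $\mathcal{K}_1^G$), and then exploit the $G$-invariance of both measures together with a $G$-averaging trick to recover the full identity. The paper performs the averaging at the level of the perturbing body $C$ (defining $h_{C_0}=\int_G h_{AC}\,d\vartheta_G$, then Fubini) before invoking the usual density of differences of support functions, while you perform density within the $G$-invariant subspace first and average the test function at the end; these are equivalent reorderings of the same argument.
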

\proof The key statement in the proof of Proposition~\ref{Euler-Lagrange}
is (\ref{support}), claiming that, if we assume $K=K^\varepsilon$ and  $\xi(K)=o$, for any convex body $C$ with 
$o\in{\rm int} C$ we have
\begin{equation}
\label{supportG}
\int_{{\mathbb S}^{n-1}}h_C\lambda_\varepsilon\,dS_{K}=\int_{{\mathbb S}^{n-1}}h_C(u)\varphi'_\varepsilon(h_K(u))\,d\mu(u).
\end{equation}
To prove (\ref{supportG}), we write $\vartheta_G$ to denote the $G$-invariant Haar probability measure on $\sfe$.
We define the $G$-invariant convex body $C_0$ by
$$
h_{C_0}=\int_Gh_{AC}\,d\vartheta_G(A).
$$
Running the proof of (\ref{support}), using $C_0$ in place of $C$, and observing
that 
$$
K_t=\{x\in K:\,\langle x,u\rangle\leq h_{K}(u)+th_{C_0}(u)\mbox{ \ \ for $u\in {\mathbb S}^{n-1}$}\}
$$
is $G$-invariant, we deduce that
\begin{equation}
\label{supportGC_0}
\int_{{\mathbb S}^{n-1}}h_{C_0}\lambda_\varepsilon\,dS_{K}=\int_{{\mathbb S}^{n-1}}h_{C_0}(u)\varphi'_\varepsilon(h_K(u))\,d\mu(u).
\end{equation}
Therefore the $G$-invariance of $K$ and $\mu$, the Fubini theorem and (\ref{supportGC_0}) imply that
\begin{eqnarray*}
\int_{{\mathbb S}^{n-1}}h_C\lambda_\varepsilon\,dS_{K}&=&
\int_G\int_{{\mathbb S}^{n-1}}h_{AC}\lambda_\varepsilon\,dS_{K}\,d\vartheta_G(A)\\
&=&\int_{{\mathbb S}^{n-1}}h_{C_0}\lambda_\varepsilon\,dS_{K}
=\int_{{\mathbb S}^{n-1}}h_{C_0}(u)\varphi'_\varepsilon(h_K(u))\,d\mu(u)\\
&=&\int_G\int_{{\mathbb S}^{n-1}}h_{AC}(u)\varphi'_\varepsilon(h_K(u))\,d\mu(u)\,d\vartheta_G(A)\\
&=&\int_{{\mathbb S}^{n-1}}h_C(u)\varphi'_\varepsilon(h_K(u))\,d\mu(u),
\end{eqnarray*}
yielding (\ref{supportG}). The rest of the proof of Proposition~\ref{Euler-Lagrange} carries over without any change.
 \ Q.E.D. \\

Having these tailored statements, the rest of the proof of Theorem~\ref{theodens} yields Proposition~\ref{theodensG}. 

The only part we do not prove here is that $o\in{\rm int}\, K$ when $p\leq -n+2$, which fact is verified using a simple argument by Chou and Wang \cite{CW}, and is also proved as Lemma~4.1 in \cite{BiBoCo}. \ Q.E.D.\\

\section{Some more simple facts needed to prove Theorems~\ref{theodens01} and \ref{theodens-n0}}

In order to prove Theorems~\ref{theodens01} and \ref{theodens-n0}, we continue our study using the same notation. However we now drop the assumption~\eqref{fcondition} on $f$, unless explicitly stated. The following is a simple consequence of the proof of
Theorem~\ref{theodens}.

\begin{lemma} 
\label{Phicentroid}
Let $p\in(-n,1)$ and $\mu$ be a measure on ${\mathbb S}^{n-1}$ with a bounded density function $f$ with respect to 
$\mathcal{H}^{n-1}$, such that ${\rm inf}\,f>0$; then there exists a convex body $M$ with
$o\in M$, 	$S_{M,p}=\mu$ and	
$$
\int_{{\mathbb S}^{n-1}}\varphi\left(V(M)^{\frac{-1}n}h_{M-\sigma(M)}(u)\right)\,d\mu
\leq\varphi(2\kappa_n^{-1/n})\mu({\mathbb S}^{n-1}).
$$
In addition, if $\mu$ is invariant under a closed subgroup $G$ of $O(n)$, then $M$ can be chosen to be invariant under $G$.
\end{lemma}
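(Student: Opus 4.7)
The plan is to recycle the minimizer produced in the proof of Theorem~\ref{theodens} (or Proposition~\ref{theodensG} in the equivariant case). That proof produces, for each small $\varepsilon>0$, a body $K^{\varepsilon}\in\mathcal{K}_{1}$ (respectively $\mathcal{K}_{1}^{G}$), translated so that $\xi(K^{\varepsilon})=o$, and along a subsequence $\varepsilon_{m}\to 0^{+}$ a Hausdorff limit $K^{\varepsilon_{m}}\to K_{0}$ with $V(K_{0})=1$; the body $M=\lambda K_{0}$, with the $\lambda>0$ defined there, satisfies $S_{M,p}=\mu$. Since $o\in\mathrm{int}\,K^{\varepsilon_{m}}$ for every $m$, we get $o\in K_{0}$ and therefore $o\in M$. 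In the $G$-equivariant case the ball $\kappa_{n}^{-1/n}B^{n}$ is $G$-invariant, so it remains an admissible competitor in $\mathcal{K}_{1}^{G}$ and the same construction yields $M\in\mathcal{K}_{0}^{n}$ invariant under $G$.

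Next, the integral in the statement is invariant under translations and positive dilations of $M$: with $M=\lambda K_{0}$ and $V(K_{0})=1$, we have $M-\sigma(M)=\lambda(K_{0}-\sigma(K_{0}))$ and $V(M)^{-1/n}=\lambda^{-1}$, so
$$
V(M)^{-1/n}h_{M-\sigma(M)}(u)=h_{K_{0}-\sigma(K_{0})}(u).
$$
Hence it suffices to prove
$$
\int_{{\mathbb S}^{n-1}}\varphi\bigl(h_{K_{0}-\sigma(K_{0})}(u)\bigr)\,d\mu(u)\leq\varphi(2\kappa_{n}^{-1/n})\,\mu({\mathbb S}^{n-1}).
$$

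Along the subsequence $\varepsilon_{m}$, the minimality of $K^{\varepsilon_{m}}$ in $\mathcal{K}_{1}$ (or $\mathcal{K}_{1}^{G}$), applied to the competitor $\kappa_{n}^{-1/n}B^{n}$, together with the bound~(\ref{Phiball}), yields
$$
\Phi_{\varepsilon_{m}}\bigl(K^{\varepsilon_{m}},\xi(K^{\varepsilon_{m}})\bigr)\leq \varphi(2\kappa_{n}^{-1/n})\,\mu({\mathbb S}^{n-1}).
$$
Since $\sigma(K^{\varepsilon_{m}})\in\mathrm{int}\,K^{\varepsilon_{m}}$ and $\xi(K^{\varepsilon_{m}})$ is the unique maximizer of $\xi\mapsto\Phi_{\varepsilon_{m}}(K^{\varepsilon_{m}},\xi)$ on $\mathrm{int}\,K^{\varepsilon_{m}}$ by Proposition~\ref{xiinside}, we may replace $\xi(K^{\varepsilon_{m}})$ by $\sigma(K^{\varepsilon_{m}})$ without breaking the inequality, obtaining
$$
\int_{{\mathbb S}^{n-1}}\varphi_{\varepsilon_{m}}\bigl(h_{K^{\varepsilon_{m}}-\sigma(K^{\varepsilon_{m}})}(u)\bigr)\,d\mu(u)\leq \varphi(2\kappa_{n}^{-1/n})\,\mu({\mathbb S}^{n-1}).
$$

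The final step is to let $m\to\infty$. Fix $r>0$ with $\sigma(K_{0})+rB^{n}\subset K_{0}$, so $h_{K_{0}-\sigma(K_{0})}\geq r$ on ${\mathbb S}^{n-1}$. Since $K^{\varepsilon_{m}}\to K_{0}$ entails $\sigma(K^{\varepsilon_{m}})\to\sigma(K_{0})$ and uniform convergence $h_{K^{\varepsilon_{m}}-\sigma(K^{\varepsilon_{m}})}\to h_{K_{0}-\sigma(K_{0})}$ on ${\mathbb S}^{n-1}$, we have $h_{K^{\varepsilon_{m}}-\sigma(K^{\varepsilon_{m}})}\geq r/2$ uniformly for all large $m$. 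Once $3\varepsilon_{m}<r/2$, definition~(\ref{phiepsdef}) gives $\varphi_{\varepsilon_{m}}=\varphi$ on the relevant range, so the integrand equals $\varphi(h_{K^{\varepsilon_{m}}-\sigma(K^{\varepsilon_{m}})}(u))$ and converges uniformly in $u$ to $\varphi(h_{K_{0}-\sigma(K_{0})}(u))$. Since $\mu$ is finite, bounded convergence delivers the required inequality. The only mildly delicate point is the bookkeeping between $\xi(K^{\varepsilon_{m}})$, which is the quantity controlled by minimality, and $\sigma(K^{\varepsilon_{m}})$, which appears in the statement; this is resolved by the one-line observation $\Phi_{\varepsilon}(K,\sigma(K))\leq\Phi_{\varepsilon}(K,\xi(K))$ coming from the definition of $\xi(K)$.
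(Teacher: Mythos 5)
Your proposal is correct and follows essentially the same path as the paper: use the minimality of $K^{\varepsilon}$ against the ball competitor together with \eqref{Phiball}, replace the optimal center $\xi(K^{\varepsilon})$ by $\sigma(K^{\varepsilon})$ via the defining maximality of $\xi(K^{\varepsilon})$, and pass to the Hausdorff limit along $\varepsilon_{m}\to 0$ using $\sigma(\widetilde K)\in\mathrm{int}\,\widetilde K$. Your write-up is slightly more explicit than the paper's about why $\varphi_{\varepsilon_{m}}$ can be replaced by $\varphi$ in the limit (the uniform lower bound $h_{K^{\varepsilon_{m}}-\sigma(K^{\varepsilon_{m}})}\geq r/2$ once $3\varepsilon_{m}<r/2$), but the argument is the same.
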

\proof  We recall that for any small $\varepsilon>0$, $K^\varepsilon\in\mathcal{K}_1$ satisfies
$$
\int_{{\mathbb S}^{n-1}}\varphi_\varepsilon\circ h_{K^\varepsilon-\xi(K^\varepsilon)}\,d\mu=
\min_{K\in\mathcal{K}_1}\max_{\xi\in {\rm int}\,K}
\int_{{\mathbb S}^{n-1}}\varphi_\varepsilon\circ h_{K-\xi}\,d\mu
$$
where $\xi(K^\varepsilon)\in{\rm int}\,K^\varepsilon$. In addition, if $\mu$ is invariant under the closed subgroup $G$ of $O(n)$, then $K^\varepsilon$ can be chosen to be invariant under $G$, and hence $\sigma(K^\varepsilon)$ is invariant under $G$, as well.
We deduce that (\ref{Phiball}) yields 
\begin{equation}
\label{Phiepsbounded}
\int_{{\mathbb S}^{n-1}}\varphi_\varepsilon\circ h_{K^\varepsilon-\sigma(K^\varepsilon)}\,d\mu\leq
\int_{{\mathbb S}^{n-1}}\varphi_\varepsilon\circ h_{K^\varepsilon-\xi(K^\varepsilon)}\,d\mu\leq\varphi(2\kappa_n^{-1/n})\mu({\mathbb S}^{n-1})
\end{equation} 
for any small $\varepsilon>0$. In the proof of Theorems~\ref{theodens} in Section~\ref{sectheodens}, we have proved
that there exist a sequence $\varepsilon_m$ with $\lim_{m\to\infty}\varepsilon_m=0$ 
and convex body $M$ with
$o\in M$ and $S_{M,p}=\mu$ such that 
$K^{\varepsilon_m}$ tends to some $\widetilde{K}\in\mathcal{K}_1$ where $\widetilde{K}=V(M)^{\frac{-1}n}\,M$. As $\sigma(K^{\varepsilon_m})$ tends  to $\sigma(\widetilde{K})$, we have that
$K^{\varepsilon_m}-\sigma(K^{\varepsilon_m})$ tends to $\widetilde{K}-\sigma(\widetilde{K})$. Therefore we conclude Lemma~\ref{Phicentroid} from $\sigma(\widetilde{K})\in{\rm int}\,\widetilde{K}$ and
(\ref{Phiepsbounded}). Q.E.D.\\

The following lemma bounds the inradius in terms of the $L_p$-surface area.

\begin{lemma}
\label{inradiusbounded}
Let $p<1$, and let $K$ be a convex body in $\R^n$ which contains $o$ and a ball of radius $r$, then
$$
S_{K,p}({\mathbb S}^{n-1})\geq \kappa_{n-1} r^{n-p}.
$$
\end{lemma}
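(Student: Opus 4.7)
The plan is to bound $S_{K,p}(\sfe)=\int_{\partial K}\langle x,\nu_K(x)\rangle^{1-p}\,d\mathcal{H}^{n-1}(x)$ from below by the $(n-1)$-volume of a hyperplane projection of the inscribed ball. Write $B=z_0+rB^n\subset K$ for the ball of radius $r$. Since $o\in K$ we have $h_K\ge 0$, hence the integrand $\langle x,\nu_K(x)\rangle^{1-p}$ is well-defined and non-negative on $\partial K$ (recall $1-p>0$). The inclusion $B\subset K$ gives, for $\mathcal{H}^{n-1}$-a.e. $x\in\partial K$,
\[
\langle x,\nu_K(x)\rangle=h_K(\nu_K(x))\ge h_B(\nu_K(x))=\langle z_0,\nu_K(x)\rangle+r.
\]

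Next I would choose $v\in\sfe$ with $\langle v,z_0\rangle\ge 0$ (take $v=z_0/\|z_0\|$ if $z_0\neq o$, any $v$ otherwise) and restrict attention to the upper half
\[
\partial^+K=\{x\in\partial K:\,\langle v,\nu_K(x)\rangle\ge 0\}.
\]
On $\partial^+K$ one has $\langle z_0,\nu_K(x)\rangle\ge 0$, hence $\langle x,\nu_K(x)\rangle\ge r$, and $1-p>0$ yields
\[
S_{K,p}(\sfe)\ge \int_{\partial^+K}\langle x,\nu_K(x)\rangle^{1-p}\,d\mathcal{H}^{n-1}(x)\ge r^{1-p}\,\mathcal{H}^{n-1}(\partial^+K).
\]

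To estimate $\mathcal{H}^{n-1}(\partial^+K)$ I would invoke the Cauchy-type identity
\[
\int_{\partial^+K}\langle v,\nu_K(x)\rangle\,d\mathcal{H}^{n-1}(x)=\mathcal{H}^{n-1}(K|v^\perp),
\]
together with $\langle v,\nu_K\rangle\le 1$ on $\partial^+K$ and with the inclusion $K|v^\perp\supset B|v^\perp$, the latter being an $(n-1)$-ball of radius $r$ in $v^\perp$. This chain gives $\mathcal{H}^{n-1}(\partial^+K)\ge \mathcal{H}^{n-1}(K|v^\perp)\ge \kappa_{n-1}r^{n-1}$, and inserting into the previous display yields $S_{K,p}(\sfe)\ge \kappa_{n-1}r^{n-p}$, as required.

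I do not anticipate any real obstacle here: the whole argument is an elementary projection estimate. The one point worth tracking is the asymmetric role of the two hypotheses $o\in K$ and $B\subset K$ — the former is used only to guarantee non-negativity of the integrand on all of $\partial K$ (so that restricting to $\partial^+K$ only decreases the integral), while the quantitative bound is extracted entirely from $B$ via the Cauchy identity.
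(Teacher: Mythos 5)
Your proof is correct, and the core estimate --- that $\langle x,\nu_K(x)\rangle\ge r$ on the portion of $\partial K$ whose normals satisfy $\langle v,\nu_K(x)\rangle\ge 0$, with $v$ pointing towards the center $z_0$ of the inscribed ball --- coincides with the paper's. The only difference is the tool used to bound from below the $(n-1)$-measure of the region over which one integrates. The paper restricts attention to the smaller set $\Xi$ of boundary points $x=y+sv$ with $y\in r(\operatorname{int}B^n)\cap v^\perp$ and $s>\theta$ (where $z_0=\theta v$); it derives $\langle x,\nu_K(x)\rangle\ge r$ for $x\in\Xi$ exactly as you do, and then obtains $\mathcal{H}^{n-1}(\Xi)\ge\kappa_{n-1}r^{n-1}$ directly because orthogonal projection onto $v^\perp$ is $1$-Lipschitz and maps $\Xi$ onto that $(n-1)$-ball of radius $r$. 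You instead take the larger set $\partial^+K$ and invoke Cauchy's projection formula $\int_{\partial^+K}\langle v,\nu_K\rangle\,d\mathcal{H}^{n-1}=\mathcal{H}^{n-1}(K|v^\perp)$. Both routes give the same constant; yours is a touch shorter if Cauchy's identity is taken for granted, while the paper's sidesteps the named formula in favor of an elementary explicit parametrization. In short, a valid and essentially equivalent variant.
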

\proof Let $x_0\in\R^n$ be such that $x_0+rB^n\subset K$. If  $x_0\neq o$ let $x_0=\theta v$ for $\theta> 0$ and $v\in {\mathbb S}^{n-1}$, otherwise let $v$ be any unit vector and let $\theta=0$. We define a subset of $\partial K$ as follows:
\[
\Xi=\{x\in\partial K:\,x=y+sv\mbox{ \ for $y\in r\left({\rm int}\,B^n\right)\cap v^\bot$ and $s>\theta$}\}.
\]
Let $x\in \Xi$, with  $x=y+sv$ for some $y\in r\left({\rm int}\,B^n\right)\cap v^\bot$ and $s>\theta$, and let $\nu_K(x)$ be an outer unit normal of $K$ at $x$. Since $x_0+r\nu_K(x)\in K$ and $x_0+y\in K$ we have
\begin{gather}
 \langle \nu_K(x),x_0+r\nu_K(x)-x\rangle\leq0, \label{inradius_bounded_a}\\
 \langle \nu_K(x),x_0+y-x\rangle\leq0. \label{inradius_bounded_b}
\end{gather}
Formula~\eqref{inradius_bounded_b} implies $\langle \nu_K(x),v\rangle\geq0$, and, as a consequence,
\begin{equation}\label{inradius_bounded_c}
 \langle \nu_K(x),x_0\rangle\geq0.
\end{equation}
Formula~\eqref{inradius_bounded_a} implies $\langle \nu_K(x),x\rangle\geq\langle \nu_K(x),x_0\rangle+r$, and, in view of \eqref{inradius_bounded_c},
\[
 \langle \nu_K(x),x\rangle\geq r.
\]
It follows from
$\mathcal{H}^{n-1}(\Xi)\geq \kappa_{n-1}r^{n-1}$ that
\[
S_{K,p}({\mathbb S}^{n-1})\geq \int_{\Xi}\langle \nu_K(x),x\rangle^{1-p}\,d\mathcal{H}^{n-1}(x)
\geq r^{1-p}\kappa_{n-1}r^{n-1},
\]
which proves Lemma~\ref{inradiusbounded}. \ Q.E.D.\\

\section{Proof of Theorem~\ref{theodens-n0}}

We have a non-trivial measure $\mu$ on ${\mathbb S}^{n-1}$ satisfying that $d\mu=f\,d\mathcal{H}^{n-1}$ for a non-negative $L_{\frac{n}{n+p}}$ function $f$. For any integer $m\geq 2$, we define $f_m$ on ${\mathbb S}^{n-1}$ as follows
$$
f_m(u)=\left\{
\begin{array}{ll}
m&\mbox{ \ if $f(u)\geq m$},\\[0.5ex]
f(u)&\mbox{ \ if $\frac1m<f(u)< m$},\\[0.5ex]
\frac1m&\mbox{ \ if $f(u)\leq\frac1m$}
\end{array}
\right. 
$$
and define the measure $\mu_m$ on ${\mathbb S}^{n-1}$ by $d\mu_m=f_m\,d\mathcal{H}^{n-1}$. 
Since $f$ is also in $L_1$ by H\"older's inequality, it follows from Lebesgue's Dominated Convergence theorem that $\mu_m$ tends weakly to $\mu$.
We choose $m_0$ such that
\begin{equation}
\label{-n0mummu}
\frac{\mu({\mathbb S}^{n-1}) }2<\mu_m({\mathbb S}^{n-1})<2\mu({\mathbb S}^{n-1})  \mbox{ \ for $m\geq m_0$}.
\end{equation}
According to Lemma~\ref{Phicentroid}, there exists a convex body $K_m$ with
$o\in K$, 	$S_{K_m,p}=\mu_m$ and	
\begin{eqnarray}
\label{-n0condition}
- V(K_m)^{\frac{|p|}n}\int_{{\mathbb S}^{n-1}}h_{K_m-\sigma(K_m)}^p\,d\mu_m&=&
\int_{{\mathbb S}^{n-1}}-\left(V(K_m)^{\frac{-1}n}h_{K_m-\sigma(K_m)}\right)^p\,d\mu_m\\
\noindent
&\leq& -(2\kappa_n^{-1/n})^p\mu_m({\mathbb S}^{n-1})
\leq-\frac{(2\kappa_n^{-1/n})^p}2\cdot \mu({\mathbb S}^{n-1}).
\end{eqnarray}
In addition, if $\mu$ is invariant under the closed subgroup $G$ of $O(n)$, then 
each $\mu_m$ is invariant under $G$, and hence 
$K_m$ can be chosen to be invariant under $G$.

\begin{lemma}
\label{Pmbounded-n0}
$\{K_m\}$ is bounded.
\end{lemma}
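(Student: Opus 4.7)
The plan is to proceed by contradiction, assuming some subsequence---still denoted $\{K_m\}$---satisfies $R(K_m)\to\infty$. Since Lemma~\ref{Phicentroid} gives $o\in K_m$, we have $\|\sigma(K_m)\|\leq R(K_m)$ and hence $K_m\subset 2R(K_m)B^n$; thus it suffices to derive a contradiction from $R(K_m)\to\infty$. Set $\widetilde K_m=V(K_m)^{-1/n}(K_m-\sigma(K_m))$, so that $V(\widetilde K_m)=1$, $\sigma(\widetilde K_m)=o$, and $R(K_m)=V(K_m)^{1/n}\widetilde R_m$ with $\widetilde R_m:=R(\widetilde K_m)$. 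Since $h_{K_m-\sigma(K_m)}=V(K_m)^{1/n}h_{\widetilde K_m}$ and $|p|+p=0$, the hypothesis~\eqref{-n0condition} reduces to the clean lower bound
\[
\int_{{\mathbb S}^{n-1}}h_{\widetilde K_m}^{\,p}\,d\mu_m\;\geq\;C_0\;:=\;\tfrac12(2\kappa_n^{-1/n})^p\,\mu({\mathbb S}^{n-1})\;>\;0,
\]
where positivity uses $p<0$.

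The main step is to show $\widetilde R_m$ is bounded. Assume for contradiction that $\widetilde R_m\to\infty$ and mimic the splitting of Lemma~\ref{Knotboundedn1}: pick $v_m\in{\mathbb S}^{n-1}$ with $\widetilde R_mv_m\in\widetilde K_m$, so by Lemma~\ref{centroid}(i) also $-(\widetilde R_m/n)v_m\in\widetilde K_m$, and partition ${\mathbb S}^{n-1}=B_m\cup\Xi_m$ where $\Xi_m=\{u:h_{\widetilde K_m}(u)\geq\sqrt{\widetilde R_m}\}$. The same computation as in Lemma~\ref{Knotboundedn1} yields $B_m\subset\{u:|\langle u,v_m\rangle|\leq n/\sqrt{\widetilde R_m}\}$ and hence ${\mathcal H}^{n-1}(B_m)=O(1/\sqrt{\widetilde R_m})$. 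On $\Xi_m$, the bound $h_{\widetilde K_m}^{\,p}\leq\widetilde R_m^{\,p/2}$ combined with $\mu_m({\mathbb S}^{n-1})\leq 2\mu({\mathbb S}^{n-1})$ gives $\int_{\Xi_m}h_{\widetilde K_m}^{\,p}\,d\mu_m\to 0$. On $B_m$, H\"older's inequality with conjugate exponents $n/|p|$ and $n/(n+p)$, together with the Blaschke--Santal\'o bound $\int h_{\widetilde K_m}^{-n}\,d{\mathcal H}^{n-1}\leq n\kappa_n^2$ (Lemma~\ref{centroid}(ii), applicable since $\sigma(\widetilde K_m)=o$ and $V(\widetilde K_m)=1$), yields
\[
\int_{B_m}h_{\widetilde K_m}^{\,p}\,d\mu_m\;\leq\;(n\kappa_n^2)^{|p|/n}\!\left(\int_{B_m}f_m^{\,n/(n+p)}\,d{\mathcal H}^{n-1}\right)^{(n+p)/n}.
\]
The pointwise bound $f_m\leq\max(f,1)$ gives $f_m^{n/(n+p)}\leq f^{n/(n+p)}+1\in L^1({\mathbb S}^{n-1})$, so the family $\{f_m^{n/(n+p)}\}$ is uniformly (in $m$) equi-integrable; absolute continuity of the integral then forces the right-hand side to $0$ as ${\mathcal H}^{n-1}(B_m)\to 0$. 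Adding the two regions contradicts the lower bound, so $\widetilde R_m\leq C$ for some constant $C$.

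Finally, knowing $\widetilde R_m\leq C$, Lemma~\ref{centroid}(iii) applied to $\widetilde K_m$ bounds its inradius from below by $\rho_0:=1/((n+1)\kappa_{n-1}C^{n-1})>0$; hence $K_m$ contains a ball of radius $\rho_0\,V(K_m)^{1/n}$, and Lemma~\ref{inradiusbounded} gives
\[
\rho_0^{\,n-p}\kappa_{n-1}\,V(K_m)^{(n-p)/n}\;\leq\;S_{K_m,p}({\mathbb S}^{n-1})\;=\;\mu_m({\mathbb S}^{n-1})\;\leq\;2\mu({\mathbb S}^{n-1}),
\]
so $V(K_m)$ is bounded. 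Together with $\widetilde R_m\leq C$ this gives $R(K_m)=V(K_m)^{1/n}\widetilde R_m$ bounded, contradicting our assumption and proving the lemma. The key obstacle in this argument is the estimate on the belt $B_m$: one has to control $h^p$ exactly on the region where $h_{\widetilde K_m}$ is small, and this is the precise point where the $L_{n/(n+p)}$ hypothesis on $f$ is indispensable---it is the pivot that drives the whole proof.
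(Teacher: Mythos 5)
Your proof is correct, and the core mechanism is exactly the one the paper uses: split the sphere into the shrinking belt around $v_m^\perp$ (where $h_{K_m-\sigma(K_m)}$ may be small) and the two caps (where it is at least $R_m t_m/n$), estimate the belt integral via H\"older with exponents $n/|p|$ and $n/(n+p)$ together with the Blaschke--Santal\'o bound on $\int h^{-n}$, and use $f\in L_{n/(n+p)}$ to make that contribution vanish; then contradict the positive lower bound coming from Lemma~\ref{Phicentroid}. The difference is organizational. The paper works directly with $K_{m}$, feeding the Lemma~\ref{centroid}(iii)/Lemma~\ref{inradiusbounded} bound $V(K_m)\leq c_0R_m^{n-1}$ into a single estimate of $V(K_m)^{|p|/n}\int h^{p}\,d\mu$, which yields $R_m$ bounded in one stroke. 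You instead normalize to $\widetilde K_m=V(K_m)^{-1/n}(K_m-\sigma(K_m))$, observe that the weight $V(K_m)^{|p|/n}$ cancels (since $|p|+p=0$), prove $\widetilde R_m$ bounded, and then need an extra closing step---using Lemma~\ref{centroid}(iii) again to lower-bound the inradius of $\widetilde K_m$ and Lemma~\ref{inradiusbounded} to upper-bound $V(K_m)$---to recombine into a bound on $R(K_m)=V(K_m)^{1/n}\widetilde R_m$. That extra step is valid but not needed in the paper's formulation. One genuine improvement in your write-up: you carry the measure $\mu_m$ consistently and justify the vanishing of $\int_{B_m} f_m^{n/(n+p)}\,d\mathcal H^{n-1}$ via the uniform domination $f_m\leq\max(f,1)$ and absolute continuity; the paper's displays (\ref{intonXim})--(\ref{intoutXim}) are written with $d\mu$ and $f$ while the contradiction target (\ref{-n0condition}) involves $d\mu_m$, an imprecision that your argument resolves cleanly.
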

\proof We set 
\begin{eqnarray*}
\varrho_m&=&\max\{\varrho:\,\sigma(K_m)+\varrho\,B^n\subset K_m\}\\
R_m&=&\min\{\|x-\sigma(K_m)\|:\,x\in K_m\}\\
t_m&=&\min\left\{\frac12,R_m^{\frac{-1}{2n}}\right\},
\end{eqnarray*}
choose $v_m\in {\mathbb S}^{n-1}$ such that $\sigma(K_m)+R_mv_m\in\partial K_m$, and define
$$
\Xi_m=\{u\in {\mathbb S}^{n-1}:\,|\langle u,v_m\rangle|\leq t_m\}.
$$
Lemma~\ref{inradiusbounded} and \eqref{-n0mummu} imply
\[
 \varrho_m\leq \left( \frac{S_{K,p}({\mathbb S}^{n-1})}{\kappa_{n-1}}\right)^\frac1{n-p}
 \leq \left( \frac{2\mu({\mathbb S}^{n-1})}{\kappa_{n-1}}\right)^\frac1{n-p}.
\]
Thus, by Lemma~\ref{centroid} (iii), we have
\begin{equation}
\label{VKmRm}
V(K_m)\leq (n+1)\kappa_{n-1}\varrho_mR_m^{n-1}\leq
(n+1)\kappa_{n-1}\left(\frac{2\mu({\mathbb S}^{n-1})}{\kappa_{n-1}}\right)^{\frac1{n-p}}R_m^{n-1}
\leq c_0 R_m^{n-1}
\end{equation}
for a $c_0>0$ depending on $\mu,n,p$.

We suppose that $\{K_m\}$ is unbounded, thus there exists a subsequence $\{R_{m'}\}$  of
$\{R_m\}$ tending to infinity, and seek a contradiction.
We may assume that $\{v_{m'}\}$ tends to $v\in {\mathbb S}^{n-1}$. In addition, the definition of $t_m$ yields
\begin{equation}
\label{tmzero}
\lim_{m'\to\infty}t_{m'}=0.
\end{equation}
We claim that
\begin{equation}
\label{Ximtozero}
\lim_{m'\to\infty}\int_{\Xi_{m'}}f^{\frac{n}{n-|p|}}\,d\mathcal{H}^{n-1}=0,
\end{equation}
which is equivalent to show that the left hand side in (\ref{Ximtozero}) is at most $\tau$ for any small 
$\tau>0$. For $s\in(0,1)$, we set
$$
\widetilde{\Xi}(s)=\{u\in {\mathbb S}^{n-1}:\,\langle u,v\rangle\leq s\}.
$$
Since $f$ is in $L_{\frac{n}{n+p}}$ with respect to $\mathcal{H}^{n-1}$,   there exists 
$\delta\in(0,\frac12)$ such that
\begin{equation}
\label{Ximtozerotau}
\int_{\widetilde{\Xi}(2\delta)}f^{\frac{n}{n-|p|}}\,d\mathcal{H}^{n-1}<\tau.
\end{equation}
 Now if $m'$ is large, then
$t_{m'}<\delta$ by (\ref{tmzero}),
and  hence $\Xi_{m'}\subset \widetilde{\Xi}(2\delta)$ as $v_{m'}$ tends to $v$.
Therefore (\ref{Ximtozerotau}) implies (\ref{Ximtozero}).

Next we claim
\begin{equation}
\label{intonXim}
\lim_{m'\to\infty}
V(K_{m'})^{\frac{|p|}n}\int_{\Xi_{m'}}h_{K_{m'}-\sigma(K_{m'})}^p\,d\mu=0.
\end{equation}
We deduce from the H\"older inequality and the form of the Blaschke-Santal\'o inequality given in Lemma~\ref{centroid} (ii) 
\begin{eqnarray*}
\int_{\Xi_{m'}}h_{K_{m'}-\sigma(K_{m'})}^p\,d\mu&= &
\int_{\Xi_{m'}}h_{K_{m'}-\sigma(K_{m'})}^{-|p|}f\,d\mathcal{H}^{n-1}\\
&\leq & \left(\int_{\Xi_{m'}}h_{K_{m'}-\sigma(K_{m'})}^{-n}\,d\mathcal{H}^{n-1}\right)^{\frac{|p|}n}
\left(\int_{\Xi_{m'}}f^{\frac{n}{n-|p|}}\,d\mathcal{H}^{n-1}\right)^{\frac{n-|p|}n}\\
&\leq&\kappa_n^{\frac{2|p|}n}\, n^{\frac{|p|}{n}}\,V(K_{m'})^{\frac{-|p|}n}
\left(\int_{\Xi_{m'}}f^{\frac{n}{n-|p|}}\,d\mathcal{H}^{n-1}\right)^{\frac{n-|p|}n}.
\end{eqnarray*}
In turn, (\ref{Ximtozero}) yields (\ref{intonXim}).

We also prove
\begin{equation}
\label{intoutXim}
\lim_{m'\to\infty}
V(K_{m'})^{\frac{|p|}n}\int_{{\mathbb S}^{n-1}\backslash \Xi_{m'}}h_{K_{m'}-\sigma(K_{m'})}^p\,d\mu=0.
\end{equation}
We observe that if $u\in {\mathbb S}^{n-1}\backslash \Xi_{m'}$, then $|\langle u,v_{m'}\rangle|>t_{m'}$. Since
$\sigma(K_{m'})-\frac{R_{m'}}n\,v_{m'}\in K$ according to Lemma~\ref{centroid} (i), we deduce that
$$
h_{K_{m'}-\sigma(K_{m'})}(u)\geq \max\left\{\left\langle u,-\frac{R_{m'}}n\,v_{m'}\right\rangle,
\langle u,R_{m'}\,v_{m'}\rangle  \right\}\geq \frac{R_{m'}t_{m'}}n.
$$
It follows, by~\eqref{VKmRm} and the definition of $t_{m'}$, that
$$
V(K_{m'})^{\frac{|p|}n}\int_{{\mathbb S}^{n-1}\backslash \Xi_{m'}}h_{K_{m'}-\sigma(K_{m'})}^p\,d\mu\leq
n^{|p|}c_0^{\frac{|p|}n}R_{m'}^{\frac{|p|(n-1)}n}(R_{m'}t_{m'})^{-|p|}\mu({\mathbb S}^{n-1})=
n^{|p|}c_0^{\frac{|p|}n}\mu({\mathbb S}^{n-1})R_{m'}^{\frac{-|p|}{2n}}
$$
proving (\ref{intoutXim}).

We deduce from (\ref{intonXim}) and (\ref{intoutXim}) that
$$
\lim_{m'\to\infty}
V(K_{m'})^{\frac{|p|}n}\int_{{\mathbb S}^{n-1}}h_{K_{m'}-\sigma(K_{m'})}^p\,d\mu=0,
$$
contradicting (\ref{-n0condition}), and proving Lemma~\ref{Pmbounded-n0}. Q.E.D.\\

\emph{Proof of Theorem~\ref{theodens-n0}.} It follows from Lemma~\ref{Pmbounded-n0} that there is a subsequence $\{K_{m'}\}$ of 
$\{K_m\}$ that tends to a compact convex set $K_0$. Since $S_{K_{m'},p}$ tends weakly to
$S_{K_0,p}$, we deduce that $\mu=S_{K_0,p}$. Since $S_{K,p}$ is the null measure when $p<1$ and $K$ has empty interior, we deduce that  ${\rm int}\,K_0\neq \emptyset$.
We note that if $\mu$ is invariant under the closed subgroup $G$ of $O(n)$, then 
$K_0$ is invariant under $G$.
Q.E.D.

\section{Proof of Theorem~\ref{theodens01} when any open hemisphere has positive measure}
\label{sec01lp1}

Let $p\in(0,1)$, and let $\mu$ be a non-trivial measure on ${\mathbb S}^{n-1}$ such that that any open hemisphere of ${\mathbb S}^{n-1}$ has positive measure. In addition, we assume that $\mu$ is invariant under the closed subgroup $G$ of $O(n)$ (possibly $G$ is a trivial subgroup).  For a finite set $Z$, we write $\# Z$ to denote its cardinality.

First we construct a sequence $\{\mu_m\}$ of $G$ invariant Borel measures weakly approximating $\mu$. 
For any $u\in {\mathbb S}^{n-1}$, we write $\Gamma_u=\{Au:\,A\in G\}$ to denote its orbit. The space of orbits is $X={\mathbb S}^{n-1}/\sim$ where $u\sim v$ if and only if $v=Au$ for some $A\in G$; let $\psi:\,{\mathbb S}^{n-1}\to X$ be the quotient map.
Since $G$ is compact, $X$ is a metric space with the metric
$$
d(\psi(u),\psi(v))=\min\{\angle (y,z):\, y\in\Gamma_u\mbox{ \ and \ }z\in\Gamma_v\}.
$$
For $m\geq 2$, let $x_1,\ldots,x_k\in X$ be an $1/m$-net; namely, for any $x\in X$, there exists $x_i$ with 
$d(x,x_i)\leq 1/m$. For any $x_i$, $i=1,\ldots,k$, we consider its Dirichlet-Voronoi cell
$$
D_i=\{x\in X:\,d(x,x_i)\leq d(x,x_j) \mbox{ \ for $j=1,\ldots,k$}\},
$$
and hence $d(x,x_i)\leq 1/m$ for $x\in D_i$. 
We set $U_0=\emptyset$ and, for $i=1,\ldots,k-1$, we define
$$
U_i=\bigcup\{\psi^{-1}(D_j):\,j=1,\ldots,i\}.
$$
We subdivide ${\mathbb S}^{n-1}$ into the pairwise disjoint Borel sets
$$
\mathcal{D}_m=\{\psi^{-1}(D_i)\backslash U_{i-1}:\, \mbox{$i=1,\ldots,k$}\}
$$
where each $\Pi\in \mathcal{D}_m$  satisfies that $\Pi$ is $G$ invariant, $\mathcal{H}^{n-1}(\Pi)>0$ and for
any $u\in\Pi$, there exists $A\in G$ with $\angle(Au,z(\Pi))\leq 1/m$ for a fixed $z(\Pi)\in\Pi$ with
$\psi(z(\Pi))\in\{x_1,\ldots,x_k\}$. 

It is time to define the density function for $\mu_m$ by
$$
f_m(u)=\frac{\mu(\Pi)}{\mathcal{H}^{n-1}(\Pi)}+\frac1{(\# \mathcal{D}_m)^2} 
\mbox{ \ \ \ \ if $u\in\Pi$ and $\Pi\in \mathcal{D}_m$},
$$
in other words, $d\mu_m=f_m\,d\mathcal{H}^{n-1}$. It follows that each $\mu_m$ is invariant under $G$,
each $f_m$ is bounded with $\inf_{u\in {\mathbb S}^{n-1}}f_m(u)>0$.

Let us show that the sequence
$\{\mu_m\}$ tends weakly to $\mu$. For any continuous $g:\,{\mathbb S}^{n-1}\to\R$, we define
the $G$ invariant function $g_0:\,{\mathbb S}^{n-1}\to\R$ by
$$
g_0(u)=\int_G g(Au)\,d\vartheta_G(A)
$$
where $\vartheta_G$ is the invariant Haar probability measure on $G$. Since $\mu$ is $G$ invariant, the Fubini theorem yields
$$
\int_{{\mathbb S}^{n-1}}g\,d\mu=\int_{{\mathbb S}^{n-1}}g_0\,d\mu\mbox{ \ and \ }
\int_{{\mathbb S}^{n-1}}g\,d\mu_m=\int_{{\mathbb S}^{n-1}}g_0\,d\mu_m
$$
for $m\geq 2$. The construction of $\mathcal{D}_m$ implies that
$\lim_{m\to\infty}\int_{{\mathbb S}^{n-1}}g_0\,d\mu_m=\int_{{\mathbb S}^{n-1}}g_0\,d\mu$, and hence
$\{\mu_m\}$ tends weakly to $\mu$.

We may assume that $m_0$ is large enough to ensure that
\begin{equation}
\label{01mummu}
\mu_m({\mathbb S}^{n-1})<2\mu({\mathbb S}^{n-1}) \mbox{ \ for $m\geq m_0$}.
\end{equation}
According to Lemma~\ref{Phicentroid}, there exists a convex body $K_m$ with
$o\in K_m$, 	$S_{K_m,p}=\mu_m$ and	
\begin{eqnarray}
\label{01condition}
 V(K_m)^{\frac{-p}n}\int_{{\mathbb S}^{n-1}}h_{K_m-\sigma(K_m)}^p\,d\mu_m&=&
\int_{{\mathbb S}^{n-1}}\left(V(K_m)^{\frac{-1}n}h_{K_m-\sigma(K_m)}\right)^p\,d\mu_m\\
\nonumber
&\leq &(2\kappa_n^{-1/n})^p\mu_m({\mathbb S}^{n-1})
\leq 2(2\kappa_n^{-1/n})^p\mu({\mathbb S}^{n-1}).
\end{eqnarray}
In addition, each $K_m$ can be chosen to be invariant under $G$.

\begin{lemma}
\label{Pmbounded01}
$\{K_m\}$ is bounded.
\end{lemma}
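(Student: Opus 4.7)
The plan is to mirror the strategy of Lemma~\ref{Pmbounded-n0}, but exploiting that for $p\in(0,1)$ the bound \eqref{01condition} constrains from above the integral of a \emph{positive} power of $h_{K_m-\sigma(K_m)}$, so it suffices to produce a single spherical region on which $\mu_m$ carries mass bounded away from zero while $h_{K_m-\sigma(K_m)}$ grows linearly with $R(K_m)$. First I set $R_m=R(K_m)$ and let $\varrho_m$ be the inradius of $K_m$ centered at $\sigma(K_m)$, and choose $v_m\in\sfe$ with $\sigma(K_m)+R_m v_m\in\partial K_m$. Since $K_m\supset\sigma(K_m)+\varrho_m B^n$ and $o\in K_m$, Lemma~\ref{inradiusbounded} together with \eqref{01mummu} gives $\varrho_m\le(2\mu(\sfe)/\kappa_{n-1})^{1/(n-p)}$, and then Lemma~\ref{centroid}(iii) yields a constant $c_0=c_0(n,p,\mu)$ with
$$
V(K_m)\le(n+1)\kappa_{n-1}\varrho_m R_m^{n-1}\le c_0R_m^{n-1}.
$$

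Now suppose for contradiction that $R_{m'}\to\infty$ along a subsequence, and pass to a further subsequence so that $v_{m'}\to v$ in $\sfe$. By Lemma~\ref{centroid}(i) both $R_{m'}v_{m'}$ and $-\tfrac{R_{m'}}{n}v_{m'}$ belong to $K_{m'}-\sigma(K_{m'})$, so for any $u\in\sfe$ with $\langle u,v_{m'}\rangle\ge\delta>0$ we have $h_{K_{m'}-\sigma(K_{m'})}(u)\ge \delta R_{m'}$. The key input is the standing hypothesis that every open hemisphere of $\sfe$ has positive $\mu$-measure: applied to the hemisphere $\{u:\langle u,v\rangle>0\}=\bigcup_{\delta>0}\{u:\langle u,v\rangle>2\delta\}$, continuity of measure yields a $\delta\in(0,\tfrac12)$ with $\mu(A_\delta)>0$, where $A_\delta=\{u\in\sfe:\langle u,v\rangle>2\delta\}$. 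For $m'$ large enough that $\|v_{m'}-v\|$ is small, $A_\delta\subset\{u:\langle u,v_{m'}\rangle>\delta\}$, so on $A_\delta$ the pointwise estimate $h_{K_{m'}-\sigma(K_{m'})}\ge\delta R_{m'}$ holds.

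Combining this with the volume bound gives
$$
V(K_{m'})^{-p/n}\int_{\sfe}h_{K_{m'}-\sigma(K_{m'})}^{p}\,d\mu_{m'}
\ge c_0^{-p/n}R_{m'}^{-p(n-1)/n}(\delta R_{m'})^{p}\,\mu_{m'}(A_\delta)
=c_0^{-p/n}\delta^{p}R_{m'}^{p/n}\,\mu_{m'}(A_\delta).
$$
Since $A_\delta$ is open and $\mu_{m'}\to\mu$ weakly, $\liminf_{m'\to\infty}\mu_{m'}(A_\delta)\ge\mu(A_\delta)>0$, whereas $R_{m'}^{p/n}\to\infty$ because $p/n>0$. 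Thus the left-hand side above tends to $+\infty$, contradicting \eqref{01condition}. Hence $\{R_m\}$ is bounded, and combined with $V(K_m)=1$ this yields boundedness of $\{K_m\}$.

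The main subtle point is the transfer from the qualitative hypothesis ``every open hemisphere has positive $\mu$-measure'' to a quantitative lower bound $\mu_{m'}(A_\delta)\ge c>0$ for all large $m'$; this is where the Portmanteau-style $\liminf$ inequality for open sets under weak convergence is essential. By contrast with the $p\in(-n,0)$ case treated in Lemma~\ref{Pmbounded-n0}, no integrability assumption on $f$ is required, because one wants the integral to \emph{blow up} rather than to be made small by splitting the sphere.
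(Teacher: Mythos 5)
Your proof is correct and follows essentially the same approach as the paper's: bound the inradius via Lemma~\ref{inradiusbounded}, get $V(K_m)\le c_0R_m^{n-1}$ from Lemma~\ref{centroid}(iii), then contradict \eqref{01condition} by concentrating the lower bound for the integral on a small cap near the circumradius direction and using the open-hemisphere hypothesis plus the Portmanteau $\liminf$ inequality to keep the $\mu_{m'}$-mass of that cap bounded below. The paper phrases the cap as a geodesic ball $\Omega(v,\tfrac{\pi}{2}-\gamma)$ and tracks angles, while you use the inner-product threshold set $A_\delta$; this is the same idea in different coordinates. One small slip at the end: the $K_m$ produced by Lemma~\ref{Phicentroid} do not have volume $1$ (only the rescaled bodies $V(K_m)^{-1/n}K_m$ do), so the closing appeal to ``$V(K_m)=1$'' is unwarranted; it is also unnecessary, since $o\in K_m$ together with $K_m\subset\sigma(K_m)+R_mB^n$ already gives $K_m\subset 2R_mB^n$, so boundedness of $\{R_m\}$ alone suffices.
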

\proof For $m\geq m_0$, we set
\begin{eqnarray*}
\varrho_m&=&\max\{\varrho:\,\sigma(K_m)+\varrho\,B^n \subset K_m\}\\
R_m&=&\min\{\|x-\sigma(K_m)\|:\,x\in K_m\},
\end{eqnarray*}
and choose $v_m\in {\mathbb S}^{n-1}$ such that $\sigma(K_m)+R_mv_m\in\partial K_m$.
It follows from Lemma~\ref{centroid} (iii), Lemma~\ref{inradiusbounded} and (\ref{01mummu}) that
\begin{equation}
\label{VKmRm01}
V(K_m)\leq (n+1)\kappa_{n-1}\varrho_mR_m^{n-1}\leq
(n+1)\kappa_{n-1}\left(\frac{2\mu({\mathbb S}^{n-1})}{\kappa_{n-1}}\right)^{\frac1{n-p}}R_m^{n-1}
\leq c_0 R_m^{n-1}
\end{equation}
for a $c_0>0$ depending on $\mu,n,p$.

We suppose that $\{K_m\}$ is unbounded, thus there exists a subsequence $\{R_{m'}\}$  of
$\{R_m\}$ tending to infinity, and seek a contradiction.
We may assume that $\{v_{m'}\}$ tends to $v\in {\mathbb S}^{n-1}$. 

For $w\in {\mathbb S}^{n-1}$ and $\alpha\in(0,\frac{\pi}2]$, we recall that $\Omega(w,\alpha)$ is the family of all $u\in {\mathbb S}^{n-1}$ with $\angle(u,w)\leq\alpha$. Since the $\mu$ measure of the open hemisphere centered at $v$ is positive, there exists $\delta>0$ and $\gamma\in(0,\frac{\pi}6)$ such that $\mu(\Omega(v,\frac{\pi}2-3\gamma))>2\delta$. 
As $\mu_m$ tends to $\mu$ weakly, there exists
$m_1\geq m_0$ such that if $m'\geq m_1$, then $\mu_{m'}(\Omega(v,\frac{\pi}2-2\gamma))>\delta$
and $\angle(v_{m'},v)<\gamma$. Therefore if $m'\geq m_1$, then
$$
\mu_{m'}\left(\Omega\left(v_{m'},\frac{\pi}2-\gamma\right)\right)>\delta.
$$
If $u\in \Omega(v_m,\frac{\pi}2-\gamma)$ then $\langle u,v_m\rangle\geq \sin\gamma$. Therefore $h_{K_{m'}-\sigma(K_{m'})}(u)\geq R_{m'}\sin\gamma$ and  
$$
\int_{\Omega(v_{m'},\frac{\pi}2-\gamma)}h_{K_{m'}-\sigma(K_{m'})}^p\,d\mu_{m'}\geq 
 (R_{m'}\sin\gamma)^p \delta.
$$
Inequality (\ref{VKmRm01}) yields
$$
\lim_{m'\to \infty}V(K_{m'})^{\frac{-p}n}\int_{{\mathbb S}^{n-1}}h_{K_{m'}-\sigma(K_{m'})}^p\,d\mu_{m'}\geq
\lim_{m'\to \infty}c_0^{\frac{-p}n}R_{m'}^{\frac{-p(n-1)}n}\cdot (R_{m'}\sin\gamma)^p \delta=\infty.
$$
This contradicts (\ref{01condition}), and proves Lemma~\ref{Pmbounded01}. \ Q.E.D.\\

\emph{Proof of Theorem 1.3 under the assumption that $\mu(\Sigma)>0$, for each open hemisphere $\Sigma$ of ${\mathbb S}^{n-1}$.} It follows from Lemma~\ref{Pmbounded01} that there is a subsequence $\{K_{m'}\}$ of 
$\{K_m\}$ that tends to a compact convex set $K_0$. Since $S_{K_{m'},p}$ tends weakly to
$S_{K_0,p}$, we deduce that $\mu=S_{K_0,p}$ and ${\rm int}\,K_0\neq \emptyset$. 
We note that if $\mu$ is invariant under the closed subgroup $G$ of $O(n)$, then 
$K_0$ is invariant under $G$.
Q.E.D.

\section{Proof of Theorem~\ref{theodens01} when the  measure is concentrated on a closed hemisphere}
\label{sec01lp2}

Let $p\in(0,1)$. First we show that the assumption required  in Conjecture~\ref{conjdens01} is necessary.

\begin{lemma}
\label{01lpnotantipodal}
If $p<1$ and $K\in \mathcal{K}_0$, then $\supp\,S_{K,p}$ is not a pair of antipodal points. 
\end{lemma}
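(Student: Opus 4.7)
The plan is to argue by contradiction: suppose $\supp\, S_{K,p}=\{u,-u\}$ for some $u\in\sfe$. (The assertion is vacuous when $n=1$, so we assume $n\ge 2$.) Since $dS_{K,p}=h_K^{1-p}\,dS_K$ with exponent $1-p>0$ and $h_K^{1-p}$ continuous and non-negative, the support of $S_{K,p}$ coincides with $\overline{\{h_K>0\}\cap\supp\, S_K}$, so
\[
\supp\, S_K\subset\{u,-u\}\cup\{v\in\sfe:h_K(v)=0\}.
\]
As $K\in\mathcal{K}_0^n$, the zero set of $h_K$ is empty when $o\in\inte K$, and equals $\nu_K(o)$ when $o\in\partial K$; in the latter case $\nu_K(o)$ lies in the open hemisphere $\{v:\langle v,w\rangle<0\}$ for every $w\in\inte K$.

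If $o\in\inte K$, then $\supp\, S_K\subset\{u,-u\}$ lies in a proper linear subspace of $\R^n$, contradicting the classical fact that the surface area measure of a convex body with non-empty interior is not concentrated on any great subsphere. So we may assume $o\in\partial K$. If one of $\pm u$, say $u$, lies in $\nu_K(o)$, then $h_K(u)=0$; choosing a small open neighborhood $U$ of $u$ that excludes $-u$, the inclusion $\supp\, S_K\cap U\subset\nu_K(o)\cap U$ forces $h_K$ to vanish on $\supp\, S_K\cap U$, hence $S_{K,p}(U)=0$, contradicting $u\in\supp\, S_{K,p}$.

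It remains to treat the subcase $u,-u\notin\nu_K(o)$. Set $a=S_K(\{u\})$, $b=S_K(\{-u\})$ and let $\mu_0$ be the restriction of $S_K$ to $\nu_K(o)$, so that $S_K=a\delta_u+b\delta_{-u}+\mu_0$. The Minkowski first-moment identity $\int_{\sfe}v\,dS_K(v)=0$ rearranges to $(a-b)u=-\int v\,d\mu_0(v)$. Taking the inner product with any fixed $w\in\inte K$ and using $\langle v,w\rangle<0$ on $\nu_K(o)$, we obtain $(a-b)\langle u,w\rangle\ge 0$, with strict inequality unless $\mu_0=0$. If $a=b$, then $\mu_0=0$ and $\supp\, S_K\subset\{u,-u\}$, returning us to the great-subsphere contradiction. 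If $a>b$, then $\langle u,w\rangle\ge 0$ for every $w\in\inte K$, and by closure for every $w\in K$; this gives $h_K(-u)=0$, so $-u\in\nu_K(o)$, contradicting the subcase assumption. The case $a<b$ is symmetric.

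The main obstacle is not depth but careful bookkeeping: one must pass between the supports of $S_{K,p}$ and $S_K$ via the vanishing locus of $h_K^{1-p}$, and then extract enough from the vector identity $\int v\,dS_K=0$, with the help of the open-hemisphere property of $\nu_K(o)$, to eliminate every remaining configuration.
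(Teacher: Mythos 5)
Your proof is correct, and it takes a genuinely different route from the paper. You both begin the same way: reduce to $o\in\partial K$, observe that $\supp\,S_K\subset\{u,-u\}\cup\nu_K(o)$, and rule out $\pm u\in\nu_K(o)$ using $h_K^{1-p}$ continuous with $1-p>0$. Where you diverge is in how the remaining configuration is killed. The paper projects the normal cone $\sigma=\{h_K=0\}$ orthogonally into $u^\perp$, notes that since $\sigma$ is pointed and $\pm u\notin\sigma$ the projection cannot have $o$ in its relative interior, applies Hahn--Banach to obtain a hyperplane $L\ni u$ supporting $\sigma$, and then shows that the open half-space $L^+$ is an open hemisphere on which $h_K>0$ and $S_K>0$, so $S_{K,p}(L^+\cap\sfe)>0$, contradicting $\supp\,S_{K,p}\subset L$. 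You instead decompose $S_K=a\delta_u+b\delta_{-u}+\mu_0$ with $\mu_0$ supported in $\nu_K(o)$, invoke Minkowski's first-moment identity $\int v\,dS_K(v)=0$ to get $(a-b)u=-\int v\,d\mu_0(v)$, and split into cases $a=b$, $a>b$, $a<b$, using $\langle v,w\rangle<0$ for $v\in\nu_K(o)$, $w\in\inte K$ to force either $\mu_0=0$ (contradicting the great-subsphere fact) or $h_K(\mp u)=0$ (contradicting the subcase). Your version is slightly more elementary in that it replaces the separation argument by the vector balance condition and a sign/case analysis; the paper's version is slightly slicker in that it produces in one stroke an open hemisphere of positive $S_{K,p}$-measure disjoint from $\{u,-u\}$. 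Both use the classical non-degeneracy of $S_K$ on open hemispheres; you additionally lean on $\int v\,dS_K=0$, which the paper avoids.
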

\proof We  suppose  that $\supp\,S_{K,p}=\{w,-w\}$ for some $w\in {\mathbb S}^{n-1}$, and seek a contradiction.
Since the surface area measure of any open hemi-sphere is positive,
we have $o\in\partial K$. Let $\sigma$ be the exterior normal cone at $o$; namely,
$$
\sigma=\{y\in\R^n:\,\langle x,y\rangle\leq 0\;\forall x\in K\}=
\{y\in\R^n:h_K(y)=0\}.
$$
It follows that $w,-w\not\in \sigma$ by $p<1$, therefore the orthogonal projection $\sigma'$ of $\sigma$ into $w^\bot$ does not contain the origin in its interior. We deduce from the Hanh-Banach theorem the existence of a $(n-2)$-dimensional linear subspace 
$L_0\subset w^\bot$ supporting $\sigma'$. Therefore the $(n-1)$-dimensional linear subspace $L=L_0+\R w$ is a supporting hyperplane to $\sigma$ at $o$.  
We write $L^+$ to denote the open halfspace determined by $L$ not containing $\sigma$. We have $S_K(L^+\cap {\mathbb S}^{n-1})>0$ on the one hand, and $h_K(u)>0$ if $u\in L^+\cap {\mathbb S}^{n-1}$ on the other hand. We deduce that
$$
S_{K,p}(L^+\cap {\mathbb S}^{n-1})=\int_{L^+\cap {\mathbb S}^{n-1}}h_K^{1-p}\,dS_K>0.
$$
In particular, $\supp\,S_{K,p}\cap (L^+\cap {\mathbb S}^{n-1})\neq \emptyset$, contradicting 
$\supp\,S_{K,p}=\{w,-w\}$. \ Q.E.D.\\

We remark that $\supp\,S_{K,p}$ can consist of a single point, as the example of a pyramid with apex at $o$ shows.

Now we prove a sufficient condition ensuring that a measure $\mu$ on ${\mathbb S}^{n-1}$ is an $L_p$-surface area measure.
For any closed convex set $X\subset \R^n$, we write ${\rm relint}\,X$ to denote the interior of $X$ with respect to 
${\rm aff}\,X$. 

\emph{Completion of the proof of Theorem~\ref{theodens01}}. 
The idea is that we associate a measure $\mu_0$ on ${\mathbb S}^{n-1}$ to $\mu$ such that the $\mu_0$ measure of any open hemisphere is positive, construct a convex body $K_0$ whose 
$L_p$-surface area measure is $\mu_0$, and then take a suitable section of $K_0$. 

Let 
$C={\rm pos}\,\supp\,\mu$ and $L={\rm lin}\,\supp\,\mu$, and let
$v_0\in{\rm relint}\,C\cap {\mathbb S}^{n-1}$.
For
$$
\sigma=\{y\in L\, \langle y,v\rangle\leq 0\mbox{ for }v\in C\},
$$
the condition $L\neq C$ yields that $\sigma\cap L\neq \{o\}$. 

We claim that $(-\sigma)\cap {\rm relint}\,C\neq\emptyset$. If it didn't hold, then
the Hahn-Banach theorem applied to $C$ and $\sigma$ yields a 
$w\in S^{n-1}\cap L$ such that 
$\langle w,x\rangle\leq 0$ for $x\in C$, and
$\langle w,y\rangle\geq 0$ for $y\in -\sigma$. In particular, $w\in \sigma$, and as $y=-w\in \sigma$, we have
$$
-1=\langle w,y\rangle\geq 0.
$$
This contradiction proves that there exists a 
$v_0\in(-\sigma)\cap {\rm relint}\,C\cap S^{n-1}$.
In particular, we have 
\begin{equation}
\label{v0L}
\mbox{$\langle u,v_0\rangle\geq 0$ for all $u\in \supp\,\mu$.}
\end{equation}

We write $\widetilde{L}=L\cap v_0^\bot$, and set $d=n-{\rm dim}\,\widetilde{L}$ where 
$1\leq d\leq n$. 
We observe that $\supp\,\mu$ is contained in the half space of $L$ bounded by $\widetilde{L}$ and containing $v_0$ by (\ref{v0L}).
We consider
a $d$-dimensional regular simplex $S_0$ in $\widetilde{L}^\bot$ with vertices 
$v_0,\ldots,v_d\in {\mathbb S}^{n-1}\cap \widetilde{L}^\bot$,
and the $A\in O(n)$ that acts as the identity map on $\widetilde{L}$, and satisfies $Av_i=v_{i+1}$ for $i=0,\ldots,d-1$. 
We consider  the cyclic group $G_0$  of the isometries of $S_0$ of order $d+1$ generated by $A$, and the subgroup $\widetilde{G}$ of $O(n)$ generated by $G$ and $G_0$.
We define the Borel measure $\mu_0$ invariant under $\widetilde{G}$ in a way such that if $\omega\subset {\mathbb S}^{n-1}$ is Borel, then
$$
\mu_0(\omega)=\sum_{i=0}^d\mu(A^i\omega).
$$
In particular, $\supp\,\mu_0=\cup_{i=0}^dA^i\supp\,\mu$.

We prove that for any $w\in {\mathbb S}^{n-1}$, there exists
\begin{equation}
\label{notinopenhemisphere}
u\in \supp\,\mu_0\mbox{ \ such that \ }\langle w,u\rangle >0.  
\end{equation}
Since $v_0+\ldots+v_d=0$,  
either there exists $i\in\{0,\ldots,d\}$ such that $\langle w,v_i\rangle >0$, or 
 $w\in \widetilde{L}$, and hence $\langle w,v_i\rangle=0$. 
For $L_i={\rm lin}\{v_i,\widetilde{L}\}=A^iL$, we write $w=w_i+\tilde{w}_i$ where
$w_i\in L_i$ and $\tilde{w}_i\in L_i^\perp$, and hence  either
$\langle w_i,v_i\rangle >0$, or $w_i=w\in \widetilde{L}$, which in turn also yield that 
$w_i\neq 0$.
Since $v_i\in{\rm relint}\,A^iC$, there exists $u\in A^i\supp\,\mu$
with $\langle w_i,u\rangle >0$, and hence $\langle w_i,u\rangle> 0$.
In turn, we conclude (\ref{notinopenhemisphere}), therefore
the $\mu_0$ measure of any open hemisphere of 
${\mathbb S}^{n-1}$ is positive.

Now the argument in Section~\ref{sec01lp1} provides a convex body $K_0\in\mathcal{K}_0^n$
whose $L_p$-surface area is $\mu_0$  and is invariant under $\widetilde{G}$.
For $i=0,\ldots,d$, the Dirichlet-Voronoi cell of $v_i$ is defined by
$$
D(v_i)=\{x\in\R^n:\,\langle x,v_i\rangle\geq \langle x,v_j\rangle\mbox{ \ for $j=0,\ldots,d$}\},
$$
which is a polyhedral cone with $v_i\in{\rm int}\,D(v_i)$. Readily, $AD(v_i)=D(v_{i+1})$ for
$i=0,\ldots,d-1$ and $\R^n=\bigcup_{j=0}^d A^jD(v_0)$, where the sets in the union have disjoint interiors.

We define
\[
K=K_0\cap D(v_0)
\]
and prove that $S_p(K,\omega)=\mu(\omega)$ for each Borel set $\omega\subset \sfe$.
Let 
\[
N=\bigcup_{x\in\inte D(v_0)}\nu_K(x)=\bigcup_{x\in\inte D(v_0)}\nu_{K_0}(x).
\]
First we observe that 
\begin{equation}\label{th13_formula1}
S_p(K,\omega)=S_p(K,\omega\cap N).
\end{equation}
Indeed, if $u\notin N$ then either $u\in\nu_K(o)$ and, as a consequence, $h_K(u)=0$, or $u\in\nu_K(x)$ for some $x$ in the intersection of $\partial D(v_0)$ and of the closure of $(\partial K)\cap \inte D(v_0)$, an intersection whose $(n-1)$-dimensional Hausdorff measure is zero. These facts imply $S_p(K,\omega\setminus N)=0$ and \eqref{th13_formula1}. 

Then we prove that if $u\in\supp\,\mu_0\setminus\tilde L$ and $u\in\nu_{K_0}(x)$ for some $x\in\partial K_0\setminus D(v_j)$ then 
\begin{equation}\label{th13_formula5}
u\notin A^j\supp\,\mu.
\end{equation}
We prove \eqref{th13_formula5} for $j=0$ arguing by contradiction; the other cases can be proved similarly. Assume that $u\in\supp\, \mu$. Since $x\notin D(v_0)$ we have that $x\in D(v_i)\setminus D(v_0)$, for some $i\in\{1,\dots,d\}$, that is $\langle x,v_0\rangle<\langle x,v_i\rangle$.  The symmetries of $K_0$ imply that $x=A^{i}y$ for some $y\in K_0$. The inclusion $\supp\,\mu\subset C$ and  \eqref{v0L} imply  $u=\alpha v_0+p$  for some $\alpha> 0$ and $p\in \widetilde{L}$.
It follows that
\begin{eqnarray*}
\langle y,u\rangle&=&\alpha\langle y,v_0\rangle+\langle y,p\rangle=
\alpha\langle A^{i}y, A^{i}v_0\rangle+\langle A^{i} y, A^{i}p\rangle=
\alpha\langle x,v_i\rangle+\langle x,p\rangle\\
&>&
\alpha\langle x,v_0\rangle+\langle x,p\rangle=\langle x,u\rangle.
\end{eqnarray*}
This contradicts the fact that $u$ is an exterior unit normal at $x$ to $\partial K_0$ and conclude the proof of \eqref{th13_formula5}.
The previous claim easily implies
\begin{equation}\label{th13_formula2}
N\cap\supp\,\mu_0\subset\supp\,\mu\quad\text{ and}\quad\nu_{K_0}^{-1}(N\cap\supp\,\mu_0\setminus\tilde L)\subset D(v_0).
\end{equation}

Formulas~\eqref{th13_formula2} imply
\begin{equation}\label{th13_formula6}
S_p(K, \omega\cap N\setminus\tilde L)=S_p(K_0, \omega\cap N\setminus\tilde L)=\mu(\omega\cap N\setminus\tilde L).
\end{equation}

On the other hand, if $u\in\tilde L$ then $A^i\nu_{K_0}^{-1}(u)=\nu_{K_0}^{-1}(u)$, for each $i$, and
\begin{align*}
\nu_{K_0}^{-1}(u)&=\bigcup_{i=0}^d \nu_{K_0}^{-1}(u)\cap A^i D(v_0)
=\bigcup_{i=0}^d A^i\Big(\nu_{K_0}^{-1}(u)\cap  D(v_0)\Big)
=\bigcup_{i=0}^dA^i\Big( \nu_{K}^{-1}(u)\Big),
\end{align*}
where the sets in the last union have disjoint relative interiors. Moreover $h_{K_0}(u)=h_K(u)$. Thus
\begin{equation}\label{th13_formula7}\begin{aligned}
S_p(K,\omega\cap N\cap\tilde L)
=&\int_{\nu_{K}^{-1}(\omega\cap N\cap\tilde L)}\langle x,\nu_K(x)\rangle^{1-p} d\mathcal{H}^{n-1}(x)\\
=&\frac1{d+1}\int_{\nu_{K_0}^{-1}(\omega\cap N\cap\tilde L)}\langle x,\nu_{K_0}(x)\rangle^{1-p} d\mathcal{H}^{n-1}(x)\\
=&\frac1{d+1}\mu_0\big(\omega\cap N\cap\tilde L\big)\\
=&\mu\big(\omega\cap N\cap\tilde L\big)
\end{aligned}\end{equation}
Formulas~\eqref{th13_formula1}, \eqref{th13_formula6} and \eqref{th13_formula7} imply that $S_p(K,\omega)=\mu(\omega)$, or in other words, that $\mu$ is the $L_p$-surface area measure of $K$.  Q.E.D.\\

\begin{example}
If $L\subset\R^n$ is a linear $d$-subspace with $2\leq d\leq n-1$, then 
there exists a convex body $K$ such that $L={\rm pos}\,\supp\,\mu$
for the $L_p$-surface area measure of $K$. To construct such a $K$, we take a $d$-ball $B\subset L$ such that $o\in\partial B$, and the exterior unit normal $v$ to $B$ at $o$. We also consider an $(n-d+1)$-dimensional convex cone $\sigma\subset{\rm lin}\{L^\bot,v\}$ 
with $v\in{\rm relint}\sigma$ and $\langle v,w\rangle>0$ for $w\in \sigma\backslash\{o\}$.  We define $K$ with the formula
$$
K=\{x\in B+L^\bot:\,\langle x,y\rangle\leq 0 \mbox{ for } y\in\sigma \}.
$$ 
\end{example}

\section{The critical case $p=-n$}
\label{secCritical}

Let $K\in{\mathcal K}_{0}^n$ with $o\in{\rm int}\,K$ and $\partial K$ is $C^3_+$, and hence
$$
dS_{K,-n}=f\,d{\mathcal H}^{n-1}
$$ 
for a $C^1$ function $f(u)=h_K(u)^{n+1}/\kappa(u)$ on ${\mathbb S}^{n-1}$ (see \eqref{SK-n}), where $\kappa(u)$ is the Gaussian curvature at $x\in\partial K$ with $\nu_K(x)=u$. For basic notions in this section, we refer to Schneider \cite{SCH} and Yang \cite{Yan10}.

Let $h=h_K$, and let $\tilde{h}=h_{K^*}$ be the support function of the polar body $K^*$, defined as follows: 
$$
K^*=\{x\in\R^n:\,\langle x,y\rangle\leq 1 \,\forall y\in K\}. 
$$
In particular, $h_{K*}(u)^{-1}u\in\partial K$ for $u\in {\mathbb S}^{n-1}$, and
both $h$ and $\tilde{h}$ are $C^2$ on $\R^n\backslash \{o\}$. We write $\tilde{f}$ to denote 
the curvature function on $\R^n$, that is the $(-n-1)$ homogeneous function satisfying
 $\tilde{f}(u)=\kappa(u)^{-1}$ for $u\in {\mathbb S}^{n-1}$. 

We also recall some definitions and results from \cite{Yan10}.
Given a function $\phi: \R^n\backslash\{o\}\rightarrow \R$,
  let $\nabla\phi: \R^n\backslash\{0\} \rightarrow \R^n$ denote its gradient and $\nabla^2\phi: \R^n\backslash\{0\} \rightarrow S^2\R^n$ its Hessian, where $S^2\R^n$ stands for symmetric $2$ tensors.
Let
\begin{equation}\label{H}
  H = \frac{1}{2}h^2: \R^n \rightarrow (0,\infty).
\end{equation}
Under the assumptions above, the gradient map, 
$\nabla H = h\nabla h: \R^n\backslash\{o\} \rightarrow \R^n\backslash\{o\}$, is a $C^1$ diffeomorphism, and, by Lemma 5.5 in \cite{Yan10},
the following relations hold for any $\xi \in \R^n\backslash\{o\}$ and $x=\nabla H$:
\begin{align}
  h(\xi) &= \tilde{h}(\nabla H(\xi)) \label{support-bis}\\
  h(\xi)\nabla h(\xi) &= x \label{x}\\
  \xi &= h(\xi)\nabla \tilde{h}(\nabla H(\xi)) \label{xi}\\
  \det \nabla^2 H(\xi) &= h^{n+1}(\xi)\tilde{f}(\xi). \label{hessian}
\end{align}

The homogeneous contour integral of a function $\phi: \R^n\backslash\{0\} \rightarrow \R$,
with homogeneity degree $-n$, is defined as
\begin{equation}\label{integral}
  \oint \phi(x)\,dx = \int_{{\mathbb S}^{n-1}} \phi(u)\,d\mathcal{H}^{n-1}(u).
\end{equation}
The volume of $K$ is given by
\begin{equation}\label{volume}
  V(K)  = \frac{1}{n}\int_{{\mathbb S}^{n-1}} \tilde{h}(u)^{-n}\,du
   = \frac{1}{n}\oint \tilde{h}(x)^{-n}\,dx = \frac{1}{n}\oint h(\xi)f(\xi)\,d\xi.
\end{equation}

We also use the following integration by parts and change of variables lemmas. 

\begin{lemma}\label{parts} (Corollary 6.6, \cite{Yan10})
  Given a $C^1$ function $\phi: \R^n\backslash\{0\} \rightarrow \R$, homogeneous of degree $-n+1$, we have, for every 
  $j\in\{1,\dots,n\}$,
   \[
    \oint \partial_j\phi(x)\,dx = 0.
  \]
\end{lemma}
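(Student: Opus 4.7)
The plan is to prove this integration-by-parts identity by applying the classical divergence theorem on a spherical shell and exploiting the homogeneity of $\phi$ so that the two boundary contributions cancel exactly. Since $\phi$ is homogeneous of degree $-n+1$, the partial derivative $\partial_j\phi$ is homogeneous of degree $-n$, so by \eqref{integral} the homogeneous contour integral
$$\oint\partial_j\phi(x)\,dx=\int_{\sfe}\partial_j\phi(u)\,d\mathcal{H}^{n-1}(u)$$
is well-defined, and the task is to show this common value is zero.

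First, for $0<\varepsilon<1$, I would apply the divergence theorem to the vector field $F(x)=\phi(x)\,e_j$ on the shell $A_\varepsilon=\{x\in\R^n:\varepsilon\leq|x|\leq 1\}$, which avoids the singularity at the origin and on which $F\in C^1$. Since $\nabla\cdot F=\partial_j\phi$, and since the outward unit normal on $\partial A_\varepsilon$ is $u$ on $\sfe$ and $-x/\varepsilon$ on $\varepsilon\sfe$, we obtain
$$\int_{A_\varepsilon}\partial_j\phi(x)\,dx=\int_{\sfe}\phi(u)\,u_j\,d\mathcal{H}^{n-1}(u)-\int_{\varepsilon\sfe}\phi(x)\,\frac{x_j}{\varepsilon}\,d\mathcal{H}^{n-1}(x).$$
Parametrizing the inner sphere by $x=\varepsilon u$, one has $\phi(\varepsilon u)=\varepsilon^{-n+1}\phi(u)$ by homogeneity, $x_j/\varepsilon=u_j$, and the surface Jacobian contributes $\varepsilon^{n-1}$; the three factors collapse to $\varepsilon^{0}=1$, so the inner-boundary integral equals the outer one and thus $\int_{A_\varepsilon}\partial_j\phi\,dx=0$.

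Next, I would evaluate the same shell integral in polar coordinates $x=ru$, $r>0$, $u\in\sfe$, with $dx=r^{n-1}\,dr\,d\mathcal{H}^{n-1}(u)$. Since $\partial_j\phi$ is homogeneous of degree $-n$, $(\partial_j\phi)(ru)=r^{-n}\partial_j\phi(u)$, hence
$$\int_{A_\varepsilon}\partial_j\phi(x)\,dx=\int_{\varepsilon}^{1}r^{-1}\,dr\int_{\sfe}\partial_j\phi(u)\,d\mathcal{H}^{n-1}(u)=\log(1/\varepsilon)\oint\partial_j\phi(x)\,dx.$$
Comparing with the previous step yields $\log(1/\varepsilon)\oint\partial_j\phi\,dx=0$ for every $\varepsilon\in(0,1)$, and since $\log(1/\varepsilon)\neq 0$ the claim follows.

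The only genuine obstacle is the sign-and-exponent bookkeeping on the inner sphere: the exponent $-n+1$ coming from the homogeneity of $\phi$, the Jacobian $\varepsilon^{n-1}$ from $\partial(\varepsilon B^n)$, and the factor $\varepsilon^{-1}$ from the inward-pointing unit normal must combine to exactly cancel the outer contribution. That $-n+1$ is precisely the degree making these factors balance is, of course, why the homogeneous contour integral is tailored to this setting, so the identity is really a manifestation of translation invariance of the ``projective'' spherical integral rather than anything deeper.
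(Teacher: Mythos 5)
Your proof is correct. The paper itself does not reproduce an argument for this lemma; it simply cites Yang's Corollary 6.6, so your write-up supplies a self-contained proof where the paper defers to a reference. The two ingredients are exactly right: the divergence theorem applied to $\phi\,e_j$ on the annulus $A_\varepsilon$, where the homogeneity degree $-n+1$ of $\phi$ combined with the Jacobian $\varepsilon^{n-1}$ of $u\mapsto\varepsilon u$ and the factor $1/\varepsilon$ from the inward normal makes the two boundary integrals coincide, giving $\int_{A_\varepsilon}\partial_j\phi\,dx=0$; and then the polar-coordinate evaluation of the same annulus integral using that $\partial_j\phi$ is homogeneous of degree $-n$, which produces $\log(1/\varepsilon)\oint\partial_j\phi\,dx$. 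Comparing the two expressions and dividing by the nonzero factor $\log(1/\varepsilon)$ gives the claim. The sign on the inner boundary and the exponent bookkeeping are both handled correctly, and the required smoothness ($C^1$) is exactly enough for the divergence theorem on the compact shell away from the origin. This is the natural and, as far as I can tell, intended argument behind Yang's corollary.
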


\begin{lemma}\label{change} (Corollary 6.8, \cite{Yan10})
  Given a $C^1$ function $\phi: \R^n\backslash\{o\} \rightarrow \R$ homogeneous of degree $-n$ and a $C^1$ diffeomorphism $\Phi: \R^n\backslash\{o\}  \rightarrow \R^n\backslash\{o\}$ homogeneous of degree $1$, we have
  \[
    \oint \phi(x)\,dx = \oint \phi(\Phi(\xi))\,\det\nabla\Phi(\xi)\,d\xi.
  \]
\end{lemma}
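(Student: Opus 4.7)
The plan is to reduce the homogeneous-contour identity to the classical Euclidean change-of-variables formula, using the homogeneities of $\phi$ and $\Phi$ to turn each $\oint$ into an ordinary Lebesgue integral over a solid annulus.

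First, fix $0<a<b$ and set $A=\{x\in\R^n : a<\|x\|<b\}$. Writing the Lebesgue integral over $A$ in polar coordinates and using that $\phi$ is homogeneous of degree $-n$, one obtains
$$
\int_A\phi(x)\,dx=\int_a^b\!\!\int_{{\mathbb S}^{n-1}}\phi(ru)\,r^{n-1}\,d\mathcal{H}^{n-1}(u)\,dr=\left(\int_a^b\frac{dr}{r}\right)\int_{{\mathbb S}^{n-1}}\phi(u)\,d\mathcal{H}^{n-1}(u)=\log(b/a)\oint\phi(x)\,dx.
$$
Thus the homogeneous contour integral $\oint\phi\,dx$ is recovered from a single ordinary integral over $A$, up to the factor $\log(b/a)$.

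Next I would apply the classical change-of-variables formula for the $C^1$ diffeomorphism $\Phi$:
$$
\int_A\phi(x)\,dx=\int_{\Phi^{-1}(A)}\phi(\Phi(\xi))\,\det\nabla\Phi(\xi)\,d\xi.
$$
The absence of an absolute value is justified by the fact that, since $\R^n\setminus\{o\}$ is connected, $\det\nabla\Phi$ has constant sign; we take this sign to be positive, which is the orientation-preserving convention implicit in the statement and the one relevant for the intended applications (where $\Phi=\nabla H$ and $\det\nabla\Phi=\det\nabla^2H=h^{n+1}\tilde{f}>0$ by \eqref{hessian}). Because $\Phi$ is homogeneous of degree one, the preimage $\Phi^{-1}(A)$ is the star-shaped shell $\{\xi : a<\|\Phi(\xi)\|<b\}$, which in polar coordinates $\xi=ru$ with $r>0$ and $u\in{\mathbb S}^{n-1}$ corresponds to $a/\psi(u)<r<b/\psi(u)$, where $\psi(u):=\|\Phi(u)\|$.

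Finally, I would evaluate the right-hand side in polar coordinates. Differentiating the identity $\Phi(r\xi)=r\Phi(\xi)$ in $\xi$ shows that $\nabla\Phi$ is homogeneous of degree $0$, hence so is $\det\nabla\Phi$; and $\phi\circ\Phi$ is homogeneous of degree $-n$. Therefore
$$
\phi(\Phi(ru))\,\det\nabla\Phi(ru)\,r^{n-1}=r^{-1}\phi(\Phi(u))\,\det\nabla\Phi(u),
$$
and the $r$-integration over $(a/\psi(u),b/\psi(u))$ contributes the factor $\log(b/a)$ independently of $u$. Hence
$$
\int_A\phi(x)\,dx=\log(b/a)\int_{{\mathbb S}^{n-1}}\phi(\Phi(u))\,\det\nabla\Phi(u)\,d\mathcal{H}^{n-1}(u)=\log(b/a)\oint\phi(\Phi(\xi))\,\det\nabla\Phi(\xi)\,d\xi.
$$
Comparing with the first formula and dividing by $\log(b/a)$ gives the desired identity. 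There is no essential obstacle here; the only point requiring care is the sign convention for the determinant, which is the orientation-preserving one in the intended applications.
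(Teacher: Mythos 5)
Your argument is correct, and it is a genuinely different route from the paper's, because the paper does not prove this lemma at all: it is quoted as Corollary 6.8 of Yang's article \cite{Yan10}, where the homogeneous contour integral is set up as the integral of an $(n-1)$-form over an arbitrary star-shaped hypersurface enclosing the origin, and the change-of-variables identity is obtained by pulling back that form under $\Phi$. Your reduction is more elementary and self-contained: the single observation that a degree $-n$ homogeneous integrand, multiplied by the polar factor $r^{n-1}$, becomes $r^{-1}$ times a function of $u$ alone lets you identify $\oint\phi$ with $\frac{1}{\log(b/a)}\int_A\phi$, apply the ordinary Jacobian formula on the annulus $A$, and then redo the polar computation on $\Phi^{-1}(A)$ using that $\nabla\Phi$ (hence $\det\nabla\Phi$) is homogeneous of degree $0$ and that $\Phi^{-1}(A)$ is the shell $a/\|\Phi(u)\|<r<b/\|\Phi(u)\|$, whose radial integral of $r^{-1}$ again gives $\log(b/a)$ independently of $u$. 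All steps check out (integrability is clear since the closures of $A$ and $\Phi^{-1}(A)$ are compact subsets of $\R^n\setminus\{o\}$), and your remark about the sign is accurate and worth keeping: as literally stated, with $\det\nabla\Phi$ rather than its absolute value, the identity requires $\det\nabla\Phi>0$ on the connected set $\R^n\setminus\{o\}$ (a linear reflection already gives a sign discrepancy), and this orientation-preserving convention is indeed the one satisfied in the only application in the paper, where $\Phi=\nabla H$ and $\det\nabla^2H=h^{n+1}\tilde f>0$ by \eqref{hessian}. What your approach buys is independence from the differential-forms machinery of \cite{Yan10}; what the cited approach buys is that the same formalism simultaneously yields the companion integration-by-parts statement (Lemma \ref{parts}) and the invariance of $\oint$ under change of contour, which your annulus trick would have to address separately.
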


The following is the core result leading to Proposition~\ref{theodensp-n} where $\delta_{ij}$ stands for the usual Kronecker symbols
$\delta$.

\begin{lemma}
\label{fp}
  Given $1 \le i, j \le n$ and $p \ne 0$,
\begin{equation}
\int_{{\mathbb S}^{n-1}} u_ih^p(u)\partial_jf_p(u) \,du = 
-(n+p) V(K)\delta_{ij},
\end{equation}
where $f_p = h^{1-p}f$.
\end{lemma}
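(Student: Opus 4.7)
I will prove the identity by a single integration by parts via Lemma~\ref{parts} and then identify the two resulting boundary integrals with $nV(K)\delta_{ij}$ and $V(K)\delta_{ij}$ respectively. Throughout, $f$ must be read as the curvature function (the $(-n-1)$-homogeneous extension of $1/\kappa$, denoted $\tilde f$ earlier in this section); otherwise the formula~\eqref{volume} $V(K)=\tfrac1n\oint hf\,d\xi$ is not even dimensionally consistent. With this convention $f_p = h^{1-p}f$ is homogeneous of degree $-n-p$, and $f_p\,d\mathcal{H}^{n-1}|_{{\mathbb S}^{n-1}}$ coincides with $dS_{K,p}$.

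The key choice is $G(x) = x_i\, h(x)^p\, f_p(x)$; its homogeneity degree is $1+p+(-n-p)=1-n$, hence Lemma~\ref{parts} yields $\oint \partial_j G\,dx = 0$. A direct differentiation gives
\[
\partial_j G \;=\; \delta_{ij}\,h^p f_p \;+\; p\,x_i\,h^{p-1}(\partial_j h)\,f_p \;+\; x_i\,h^p\,\partial_j f_p,
\]
every summand being homogeneous of degree $-n$. By~\eqref{integral} the contour integral reduces to the spherical integral; simplifying $h^p f_p = hf$ and $h^{p-1} f_p = f$ on ${\mathbb S}^{n-1}$,
\[
\int_{{\mathbb S}^{n-1}} u_i h^p\, \partial_j f_p\,du \;=\; -\,\delta_{ij}\int_{{\mathbb S}^{n-1}} hf\,du \;-\; p\int_{{\mathbb S}^{n-1}} u_i\,(\partial_j h)\,f\,du.
\]

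The first remaining integral equals $nV(K)$ by~\eqref{volume}. For the second, I apply the divergence theorem to $\partial_i x_j = \delta_{ij}$ in $\R^n$ to get
\[
V(K)\,\delta_{ij} \;=\; \int_K \partial_i x_j\,dx \;=\; \int_{\partial K} x_j\,(\nu_K(x))_i\,d\mathcal{H}^{n-1}(x),
\]
and then transplant to the sphere through the inverse Gauss map $u \mapsto \nabla h(u)\in\partial K$ (which pulls $d\mathcal{H}^{n-1}(x)$ back to $f(u)\,du$ and, by Euler's identity, sends $x_j(u)$ to $\partial_j h(u)$); this yields $V(K)\,\delta_{ij} = \int_{{\mathbb S}^{n-1}} u_i\,(\partial_j h)\,f\,du$. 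Substituting both values gives the claim
\[
\int_{{\mathbb S}^{n-1}} u_i h^p\,\partial_j f_p\,du \;=\; -nV(K)\,\delta_{ij} - pV(K)\,\delta_{ij} \;=\; -(n+p)V(K)\,\delta_{ij}.
\]

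The only genuinely creative step is the choice of $G$; after that everything is bookkeeping with homogeneity degrees plus two standard identities (the volume formula and the divergence theorem transplanted by the Gauss map). As a consistency check, since $f_p$ has degree $-n-p$, Euler's identity gives $\sum_i u_i\partial_i f_p = -(n+p)f_p$, so tracing the claimed formula directly produces $\sum_i I_{ii} = -(n+p)\int h^p f_p\,du = -n(n+p)V(K)$, matching the trace of the right-hand side.
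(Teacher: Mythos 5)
Your proof is correct, and it takes a genuinely different route from the paper's. The paper computes the single quantity $\oint x_i\partial_j\tilde h(x)(\tilde h(x))^{-n-1}\,dx$ in two ways: once by integrating by parts directly (Lemma~\ref{parts}) to obtain $V(K)\delta_{ij}$, and once by pushing through the Legendre-type diffeomorphism $x=\nabla H(\xi)$ (Lemma~\ref{change}, using \eqref{x}--\eqref{hessian}) and integrating by parts a second time, which produces $-\tfrac{n}{p}V(K)\delta_{ij}-\tfrac{1}{p}\int u_jh^p\partial_if_p\,du$; equating the two gives the claim. You instead take $G(x)=x_ih(x)^pf_p(x)$, which has homogeneity degree $1-n$, apply Lemma~\ref{parts} to $\oint\partial_jG\,dx=0$, and identify the two remaining integrals with $nV(K)\delta_{ij}$ (from \eqref{volume}) and $V(K)\delta_{ij}$ (from the divergence theorem on $K$ transplanted to $\sfe$ by the inverse Gauss map $u\mapsto\nabla h(u)$, whose Jacobian is $\tilde f$). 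What you gain: you sidestep Lemma~\ref{change}, the identities \eqref{support-bis}--\eqref{hessian}, and the whole polar-body detour, replacing them with a single classical Gauss-theorem computation on $K$; this makes the proof shorter and more elementary. What you give up is the explicit appearance of the $K\leftrightarrow K^*$ duality, which is in keeping with the centro-affine flavor of the section but is not logically needed. Your observation that $f$ must be read as the curvature function $\tilde f$ (rather than the spherical density $h^{n+1}/\kappa$ of $S_{K,-n}$) is correct; the paper's own proof and formula \eqref{volume} silently adopt the same convention, and \eqref{volume} would be dimensionally wrong otherwise. The trace check at the end via Euler's identity for the $(-n-p)$-homogeneous $f_p$ is a nice sanity check and is consistent.
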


\begin{proof}
By \eqref{integral} and Lemma~\ref{parts},
\begin{equation}\label{1}
  \begin{split}
  \int_{{\mathbb S}^{n-1}} u_i\partial_j\tilde{h}(u) (\tilde{h}(u))^{-n-1}\,du
&= \oint x_i\partial_j \tilde{h}(x)(\tilde{h}(x))^{-n-1}\,dx\\
&= -\frac{1}{n}
\oint x_i\partial_j(\tilde{h}(x))^{-n}\,dx\\
&= \frac{1}{n}
\oint \partial_j(x_i)(\tilde{h}(x))^{-n}\,dx\\
&= \frac{1}{n}
\oint \delta_{ij}(\tilde{h}(x))^{-n}\,dx\\
&= V(K)\delta_j^i.
\end{split}
\end{equation}
On the other hand, using the change of variable $x = \nabla H(\xi)$, it follows by Lemma~\ref{change}, \eqref{x}, \eqref{xi}, \eqref{hessian}, Lemma~\ref{parts}, and \eqref{volume} that
\begin{equation}\label{2}
  \begin{split}
  \oint x_i\partial_j \tilde{h}(x)(\tilde{h}(x))^{-n-1}\,dx
  &=
    \oint (h(\xi)\partial_ih(\xi))\xi_j h^{-n-2}(\xi)\det\nabla^2H(\xi)\,d\xi\\
  &=
  \oint \partial_ih(\xi)\xi_j\tilde{f}(\xi)\,d\xi\\
  &=
  \oint (h^{p-1}\partial_ih)\xi_jh^{1-p}\tilde{f}\,d\xi\\
  &=
  \frac{1}{p}\oint \partial_i(h^p(\xi))\xi_j(h^{1-p}\tilde{f})\,d\xi\\
  &=
    -\frac{1}{p}\oint h^p(\xi)\partial_i(\xi_jh^{1-p}\tilde{f})\,d\xi\\
  &= -\frac{1}{p}\oint \delta_{ij}h(\xi)\tilde{f}(\xi) + \xi_jh^p(\xi)\partial_if_p(\xi)\,d\xi\\
  &= -\frac{n}{p}V(K)\delta_{ij} - \frac{1}{p}\oint \xi_jh^p(\xi)\partial_if_p(\xi)\,d\xi\\
  &= -\frac{n}{p}V(K)\delta_{ij} - \frac{1}{p}\int_{{\mathbb S}^{n-1}}
  u_jh^p(u)\partial_if_p(u)\,du
\end{split}
\end{equation}
The lemma now follows by \eqref{1} and \eqref{2}.
\end{proof}

Setting $p = -n$ in Lemma~\ref{fp}, we get Proposition~\ref{theodensp-n}.\\

\noindent{\bf Acknowledgement} We thank Bal\'azs Csik\'os, Gaoyong Zhang and Guangxian Zhu for helpful discussions.

\end{document}